\newtheorem*{remark}{Remark}
\newtheorem{claim}{Claim}
\newtheorem{theorem}{Theorem}
\newtheorem*{theorem'}{Theorem}
\newtheorem*{theorem-main}{Theorem 4}
\newtheorem{lemma}[theorem]{Lemma}
\newtheorem{conjecture}[theorem]{Conjecture}
\newtheorem{proposition}[theorem]{Proposition}
\newtheorem{corollary}[theorem]{Corollary}
\theoremstyle{remark}
\newtheorem{example}{Example}
\DeclareMathOperator{\Var}{Var}
\newcommand{\val}{val}
\DeclareMathOperator{\Inf}{Inf}
\DeclareMathOperator{\parity}{parity}
\newcommand{\E}{\mathbb E}
\begin{document}
\title{Juntas in the $\ell^1$-grid and Lipschitz maps between discrete tori}

\author{Itai Benjamini\footnote{Weizmann Institute of Science, Israel. e-mail: itai.benjamini@weizmann.ac.il} \\
David Ellis\footnote{Queen Mary, University of London, UK. Research supported in part by a Feinberg Visiting Fellowship from the Weizmann Institute of Science, Israel. e-mail: D.Ellis@qmul.ac.uk} \\
Ehud Friedgut\footnote{Weizmann Institute of Science, Israel. Research supported in part by I.S.F. grant 0398246, and B.S.F. grant 2010247.
e-mail: ehud.friedgut@weizmann.ac.il} \\
Nathan Keller\footnote{Bar Ilan University, Israel. Research supported in part by I.S.F. grant 402/13 and by the Alon fellowship.
e-mail: nkeller@math.biu.ac.il} \\
Arnab Sen\footnote{University of Minnesota, USA.  Research supported in part by N.S.F. grant DMS-1406247. e-mail: arnab@umn.edu}}

\date{August 2015}
\maketitle

\begin{abstract}
We show that if $A \subset [k]^n$, then $A$ is $\epsilon$-close to a junta depending upon at most $\exp(O(|\partial A|/(k^{n-1}\epsilon)))$ coordinates, where $\partial A$ denotes the edge-boundary of $A$ in the $\ell^1$-grid. This bound is sharp up to the value of the absolute constant in the exponent. This result can be seen as a generalisation of the Junta theorem for the discrete cube, from \cite{friedgut-junta}, or as a characterisation of large subsets of the $\ell^1$-grid whose edge-boundary is small. We use it to prove a result on the structure of Lipschitz functions between two discrete tori; this can be seen as a discrete, quantitative analogue of a recent result of Austin \cite{austin}. We also prove a refined version of our junta theorem, which is sharp in a wider range of cases.

\end{abstract}

\section{Introduction}

For $k \in \mathbb{N}$, we write $[k]:= \{1,2,\ldots, k\}$. We work on the {\em $\ell^1$-grid}, the graph $G_{k,n}$ with vertex-set $V(G_{k,n}) = [k]^n$, and edge-set
$$E(G_{k,n}) = \{\{x,y\} \in ([k]^n)^{2}:\ \exists j \in [n]:\ |y_j - x_j| = 1,\ x_i = y_i\ \forall i \neq j\}.$$
In other words, two vectors in $[k]^n$ are joined by an edge if and only if they differ in exactly one coordinate, and their values in this coordinate differ by exactly 1. Note that $G_{2,n}$ is the graph of the $n$-dimensional discrete cube, often denoted by $Q_n$.

If $A \subset [k]^n$, we write $\partial A$ for the {\em edge-boundary} of $A$ in the grid, meaning the set of edges of the grid which join a point in $A$ to a point in $A^c := [k]^n \setminus A$.

Bollob\'as and Leader \cite{grid-iso} proved the following edge-isoperimetric inequality for the $\ell^1$-grid.
\begin{theorem}[Bollob\'as, Leader]
\label{thm:grid-iso}
Let $A \subset [k]^n$ with $|A| \leq k^n/2$. Then
$$|\partial A| \geq \min\{|A|^{1-1/r} rk^{(n/r)-1}:\ r \in \{1,2,\ldots,n\}\}.$$
\end{theorem}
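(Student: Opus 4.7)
The plan is to combine coordinate-wise compression with induction on $n$.

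First, I would identify the extremal sets to see the shape of the bound: for the sub-box $B_{r,m} := [m]^r \times [k]^{n-r}$ of size $m^r k^{n-r}$, a direct count yields $|\partial B_{r,m}| = r m^{r-1} k^{n-r}$, which, when rewritten in terms of $a := |B_{r,m}|$, equals exactly the $r$-th term $a^{1-1/r} r k^{(n/r)-1}$ of the minimum. So the bound is tight on sub-boxes, and the task is to show that no set does better.

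Next, I would apply compression along each coordinate direction. For $j \in [n]$, define the $j$-compression $C_j(A)$ by replacing, in each $j$-axis-parallel line of the grid, the intersection of $A$ with that line by the initial segment of the same size. A line-by-line comparison of edge-contributions shows $|C_j(A)| = |A|$ and $|\partial C_j(A)| \leq |\partial A|$. Iterating the $C_j$'s in a fixed cyclic order terminates at a set $A^*$ invariant under all $C_j$; such a set is a \emph{down-set}: $x \in A^*$ and $y_i \leq x_i$ for every $i$ together imply $y \in A^*$. It therefore suffices to prove the inequality for down-sets.

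For a down-set $A$ I would induct on $n$. The base $n=1$ is immediate. For $n \geq 2$, slice along the last coordinate: $A = \bigsqcup_{i=1}^k (A_i \times \{i\})$, where each $A_i \subset [k]^{n-1}$ is itself a down-set and $A_1 \supseteq A_2 \supseteq \cdots \supseteq A_k$. Writing $a_i := |A_i|$, the edge-boundary splits cleanly into horizontal and vertical contributions:
$$|\partial A| = \sum_{i=1}^{k} |\partial A_i| + \sum_{i=1}^{k-1} (a_i - a_{i+1}) = \sum_{i=1}^{k} |\partial A_i| + (a_1 - a_k).$$
Each $|\partial A_i|$ is then bounded from below by the inductive hypothesis, applied to $A_i$ or to $A_i^c$ depending on which is smaller (since $\partial A_i = \partial A_i^c$, the inequality extends symmetrically).

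The main obstacle is the final optimisation: passing from the per-slice lower bound $\sum_i \min_{s \in [n-1]} s a_i^{1-1/s} k^{(n-1)/s-1} + (a_1 - a_k)$ to the target $\min_{r \in [n]} r a^{1-1/r} k^{n/r-1}$ with $a = \sum_i a_i$. The matching of extremal configurations is encouraging: a sub-box $[m]^r \times [k]^{n-r}$ with $r < n$ has all slices equal to $[m]^r \times [k]^{n-r-1}$, extremal in $[k]^{n-1}$ with the same parameter $r$, and the telescoping term vanishes; whereas $[m]^n$ has its first $m$ slices equal to $[m]^{n-1}$, extremal with parameter $n-1$, and the telescoping term $a_1 - a_k = m^{n-1}$ supplies precisely the missing piece $n m^{n-1} - (n-1) m^{n-1}$. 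Carrying this through rigorously requires a case analysis on the optimising $r$, exploiting concavity of $a \mapsto a^{1-1/s}$ and the monotonicity $a_1 \geq \cdots \geq a_k$ to aggregate slice bounds efficiently; this bookkeeping is where the real work lies.
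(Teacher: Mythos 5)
The paper does not prove this result; it cites Bollob\'as and Leader \cite{grid-iso} and uses it as a black box, so there is no in-paper proof to compare against. I will therefore assess your attempt on its own terms and against the known proof in \cite{grid-iso}.

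Your outline does follow the general strategy of the actual Bollob\'as--Leader proof: compression along each axis to reduce to down-sets, then induction on dimension by slicing along the last coordinate. The preliminary pieces are correct. The computation that the sub-box $[m]^r\times[k]^{n-r}$ achieves the $r$-th term of the minimum checks out; the reduction to down-sets via axis compression is standard; and the decomposition $|\partial A| = \sum_i |\partial A_i| + (a_1 - a_k)$ for a down-set sliced along the last coordinate is exactly right.

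However, the argument as written has a genuine gap, and it is exactly where you locate ``the real work.'' Calling the final optimisation ``bookkeeping'' substantially undersells it --- it \emph{is} the theorem. After slicing, the inductive hypothesis gives a lower bound on $\sum_i |\partial A_i|$ of the form $\sum_i \min_{s\in[n-1]} s\, b_i^{1-1/s} k^{(n-1)/s-1}$, where $b_i$ is $a_i$ or $k^{n-1}-a_i$ depending on which is $\leq k^{n-1}/2$. Several serious obstructions remain unaddressed. First, the minimising $s$ may differ from slice to slice, so one cannot simply factor it out and aggregate via subadditivity of $a\mapsto a^{1-1/s}$. Second, even when $s$ is the same across slices, aggregating $\sum_i a_i^{1-1/s}\geq a^{1-1/s}$ leaves a $k$-power of $k^{(n-1)/s-1}$ rather than the target $k^{n/s-1}$; closing that gap of $k^{1/s}$ requires a non-trivial interplay with the vertical term $a_1-a_k$ and is not a straightforward concavity argument. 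Third, slices with $a_i > k^{n-1}/2$ contribute via $A_i^c$, which breaks the additive structure of the aggregation entirely. The Bollob\'as--Leader paper resolves all of this through a considerably more delicate analysis (handling compressed sets via further structural reductions and a careful case split on where the optimum $r$ lies), and none of that machinery appears in your sketch. Without it the proof does not close.

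A smaller issue: the compression inequality $|\partial C_j(A)|\leq|\partial A|$ is asserted by a ``line-by-line comparison,'' but the non-trivial case is edges in directions $j'\neq j$, which requires comparing pairs of parallel $j$-lines; this deserves at least a citation to a standard compression lemma rather than being waved through.
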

This is essentially best possible; it is sharp (for example) when $|A| = a^s k^{n-s}$ for integers $a,s$ with $1 \leq a \leq k-1$ and $1 \leq s \leq n$, as can be seen by taking $A$ to be the cuboid $[a]^s \times [k]^{n-s}$. (Note that since $\partial(A^c) = \partial A$, Theorem \ref{thm:grid-iso} immediately implies an isoperimetric inequality for subsets $A \subset [k]^n$ with $|A| \geq k^n/2$.)

When applied to large subsets, Theorem \ref{thm:grid-iso} implies the following.
\begin{corollary}[Bollob\'as, Leader]
\label{corr:grid-iso}
Let $A \subset [k]^n$ with $k^n/4 \leq |A| \leq 3k^n/4$. Then
$$|\partial A| \geq k^{n-1}.$$
\end{corollary}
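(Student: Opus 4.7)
The plan is to deduce the corollary directly from Theorem~\ref{thm:grid-iso} by a short calculation. Since $\partial A = \partial A^c$ and $k^n/4 \leq |A^c| \leq 3k^n/4$ whenever $A$ is in this range, I may assume without loss of generality that $k^n/4 \leq |A| \leq k^n/2$, so that Theorem~\ref{thm:grid-iso} applies. It then suffices to show that for every $r \in \{1,2,\ldots,n\}$,
$$|A|^{1-1/r}\, r\, k^{(n/r)-1} \;\geq\; k^{n-1}.$$

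For $r=1$ the left-hand side equals $k^{n-1}$ exactly, so the inequality holds with equality. For $r \geq 2$, rearranging, the inequality is equivalent to
$$|A| \;\geq\; \frac{k^n}{r^{r/(r-1)}}.$$
Since $|A| \geq k^n/4$ by hypothesis, it is enough to prove that $r^{r/(r-1)} \geq 4$ for all integers $r \geq 2$. At $r = 2$ this holds with equality ($2^{2/1} = 4$), so the main (and essentially only) substantive step is to verify that $r \mapsto r^{r/(r-1)}$ is non-decreasing on $[2,\infty)$. This reduces to showing that $g(r) := \frac{r\ln r}{r-1}$ is non-decreasing for $r > 1$; differentiating gives
$$g'(r) \;=\; \frac{(r-1) - \ln r}{(r-1)^2},$$
which is non-negative thanks to the standard inequality $\ln r \leq r - 1$. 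Combining these observations yields the bound for every $r$ and completes the proof.

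There is no real obstacle here: the corollary is essentially an immediate numerical consequence of Theorem~\ref{thm:grid-iso}, with the one elementary analytic step being the monotonicity of $r^{r/(r-1)}$ that pins down $r=2$ as the worst case (which is exactly the case tight for cuboids of the form $[k/2]\times [k]^{n-1}$, matching the sharpness statement following Theorem~\ref{thm:grid-iso}).
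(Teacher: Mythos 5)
Your proof is correct and is exactly the kind of direct verification the paper implicitly invokes when it says Corollary~\ref{corr:grid-iso} follows from Theorem~\ref{thm:grid-iso} (the paper gives no proof of its own). The complementation step, the algebraic rearrangement to $|A| \geq k^n/r^{r/(r-1)}$, and the monotonicity argument via $\ln r \leq r-1$ are all sound, with $r=1$ and $r=2$ correctly identified as the equality cases.
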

Observe that Corollary \ref{corr:grid-iso} is sharp whenever $|A| = bk^{n-1}$, for some integer $b$ with $k/4 \leq b \leq 3k/4$.

In this paper, we will obtain a description of subsets of $[k]^n$ whose edge-boundary has size within a constant factor of $k^{n-1}$. This description will be sharp (up to absolute constant factors) for subsets of measure bounded away from 0 and 1.

\subsection{A junta theorem for the $\ell^1$-grid}

We proceed to outline some more notation and background. If $A \subset [k]^n$, we define its {\em characteristic function} $1_A$ to be the Boolean function on $[k]^n$ with $1_A(x) = 1$ if $x \in A$ and $1_A(x)=0$ if $x \notin A$. 
We let $\mathbb{R}[[k]^n]$ denote the vector space of all real-valued functions on $[k]^n$. We write $||\cdot||_1$ for the $L^1$-norm on $\mathbb{R}[[k]^n]$, defined by
$$||f||_1 = \frac{1}{k^n} \sum_{x \in [k]^n} |f(x)|\quad (f \in \mathbb{R}[[k]^n].)$$
If $f,g:[k]^n \to \mathbb R$, we say that $f$ and $g$ are {\em $\epsilon$-close} if $||f-g||_1 \leq \epsilon$.

If $g: [k]^n \to \{0,1\}$, we say that $g$ is an {\em M-junta} if there exists a set of coordinates $J \subset [n]$ with $|J| = M$, such that $g(x)$ depends only upon the values $(x_j)_{j \in J}$.

If $f:\{0,1\}^n \to \{0,1\}$, we define the {\em influence of coordinate $j$ on $f$} to be
$$\textrm{Pr}_{x \in \{0,1\}^n} \{f(x) \neq f(x \oplus e_j)\},$$
where $\textrm{Pr}_{x \in \{0,1\}^n}$ denotes the probability when $x$ is chosen uniformly at random from $\{0,1\}^n$, $\oplus$ denotes modulo 2 addition, and $e_j$ denotes the $j$th unit vector, i.e. the vector with 1 in the $j$th coordinate and zeros elsewhere. We define the {\em total influence} of $f$ to be the sum of its influences,
$$\Inf(f) = \sum_{j=1}^{n} \Inf_j(f).$$
Finally, if $\Omega$ is any set, $j \in [n]$, and $x \in \Omega^n$, we define the {\em fibre at $x$ in direction $j$} to be the set
$$a_j^x = \{y \in \Omega^n:\ y_i = x_i\ \forall i \neq j\}$$
(i.e., the $j$-coordinate varies freely and all the other coordinates are fixed equal to their values in $x$). Of course, we may identify $a_j^x$ with $[k]$ in a natural way, via the bijection $y_j \mapsto j$. Since $a_j^x$ is independent of $x_j$, by abuse of notation we will sometimes write it as $a_j^y$, where $y \in [k]^{n-1}$ is obtained from $x$ by deleting the $j$th entry of $x$.

Notice that if $f:\{0,1\}^n \to \{0,1\}$, then the influence of coordinate $j$ on $f$ is precisely the probability that $f$ is non-constant on a uniform random fibre in direction $j$.

We will make use of the Junta theorem from  \cite{friedgut-junta}, which characterizes the Boolean functions on $\{0,1\}^n$ of bounded total influence.

\begin{theorem}[Junta theorem, \cite{friedgut-junta}]
\label{thm:friedgut-junta}
Let $f: \{0,1\}^n \to \{0,1\}$. Then for every $\epsilon >0$, there exists an $M$-junta $g:\{0,1\}^n \to \{0,1\}$ such that $f$ is $\epsilon$-close to $g$, and
 $$M \leq \exp(C_0 \Inf(f)/\epsilon),$$
 for some absolute constant $C_0$. (In fact, one can take $C_0 = 2+ \sqrt{3 \ln 2}$.)
\end{theorem}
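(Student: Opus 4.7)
The natural approach is Fourier-analytic, via the Bonami--Beckner hypercontractive inequality. I would identify $\{0,1\}^n$ with $\{-1,1\}^n$ and expand $f = \sum_{S \subseteq [n]} \hat f(S) \chi_S$, so that $\Inf_j(f) = \sum_{S \ni j} \hat f(S)^2$ and $\Inf(f) = \sum_S |S|\, \hat f(S)^2$. Fix a threshold $t > 0$ and a degree cutoff $d \in \mathbb{N}$, to be chosen at the end, and set $J = \{j : \Inf_j(f) \geq t\}$, so $|J| \leq \Inf(f)/t$ by Markov. Then define the truncated Fourier approximant
\[ h \;:=\; \sum_{\substack{S \subseteq J \\ |S| \leq d}} \hat f(S)\, \chi_S \]
and round to the Boolean $|J|$-junta $g(x) := \mathbf{1}[h(x) \geq 1/2]$. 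Since $\{f \neq g\} \subseteq \{|f-h| \geq 1/2\}$, Chebyshev gives $\|f-g\|_1 \leq 4\|f-h\|_2^2$, so it suffices to establish $\|f-h\|_2^2 \leq \epsilon/4$.

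By Parseval, $\|f-h\|_2^2 = \sum_{|S|>d} \hat f(S)^2 + \sum_{|S| \leq d,\ S \not\subseteq J} \hat f(S)^2$. The first (high-degree) piece is at most $\Inf(f)/(d+1)$ directly from the identity $\Inf(f) = \sum_S |S|\, \hat f(S)^2$, so choosing $d$ of order $\Inf(f)/\epsilon$ kills it. The second (low-influence) piece is at most
\[ \sum_{j \notin J}\ \sum_{\substack{S \ni j \\ |S| \leq d}} \hat f(S)^2, \]
and the task reduces to upgrading the trivial bound $\sum_{S \ni j,\, |S| \leq d} \hat f(S)^2 \leq \Inf_j(f)$ to one of the form $\Inf_j(f)^{1+\alpha}$ (at an acceptable exponential-in-$d$ cost), so that the smallness of $\Inf_j(f)$ for $j \notin J$ can be leveraged after summing.

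This improved per-coordinate bound is the main obstacle, and where hypercontractivity enters. Writing $g_j := \sum_{j \in S,\, |S| \leq d} \hat f(S)\, \chi_S$ for the low-degree part of the discrete derivative $D_j f$, one has $\|g_j\|_2^2 = \langle g_j, D_j f\rangle$ because the high-degree tail of $D_j f$ is orthogonal to $g_j$. H\"older then gives $\|g_j\|_2^2 \leq \|g_j\|_4\, \|D_j f\|_{4/3}$; the Bonami--Beckner $(4,2)$-inequality yields $\|g_j\|_4 \leq 3^{d/2} \|g_j\|_2$ since $g_j$ has degree $\leq d$; and since $D_j f$ is bounded with support of measure $\Inf_j(f)$, we get $\|D_j f\|_{4/3} \lesssim \Inf_j(f)^{3/4}$. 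Combining these yields $\|g_j\|_2^2 \lesssim 3^d\, \Inf_j(f)^{3/2}$, and summing over $j \notin J$ using $\Inf_j(f) < t$ there bounds the low-influence piece by $O(3^d\, t^{1/2}\, \Inf(f))$. Choosing $t = \exp(-C\Inf(f)/\epsilon)$ for a sufficiently large absolute $C$ makes this at most $\epsilon/8$, yielding $|J| \leq \Inf(f)/t = \exp(O(\Inf(f)/\epsilon))$ as required; fine-tuning the H\"older exponents recovers the explicit constant $C_0 = 2 + \sqrt{3\ln 2}$.
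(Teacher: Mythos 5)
The paper does not prove this theorem; it cites it from Friedgut's 1998 paper \cite{friedgut-junta} and uses it as a black box. Your sketch is, in essence, a correct reconstruction of Friedgut's original argument: define $J$ as the set of coordinates of influence $\geq t$ (so $|J|\leq \Inf(f)/t$ by Markov), truncate the Fourier expansion to degree $\leq d$ and support inside $J$, round, and split the $L^2$ error into a high-degree piece (killed by $\Inf(f)=\sum_S|S|\hat f(S)^2$) and a low-influence piece (killed by hypercontractivity). The main computation --- upgrading $\sum_{S\ni j,\ |S|\leq d}\hat f(S)^2\leq \Inf_j(f)$ to $\leq 3^d\Inf_j(f)^{3/2}$ via H\"older and the $(4,2)$-hypercontractive inequality --- is correct; Friedgut's paper instead applies the noise operator and the general inequality $\|T_\rho g\|_2\leq \|g\|_{1+\rho^2}$ with $\rho$ a free parameter, which is what yields the sharp constant $C_0 = 2+\sqrt{3\ln 2}$ upon optimisation; your fixed choice $\rho=1/\sqrt 3$ gives a slightly worse constant, as you note. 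Two small points worth flagging: (i) your final calibration implicitly assumes $\Inf(f)/\epsilon \geq 1$; when $\Inf(f) < \epsilon$, the Poincar\'e inequality $\Var(f)\leq \Inf(f)$ shows $f$ is already $\epsilon$-close to a constant, so this reduction should be stated; (ii) you switch between the $\{-1,1\}^n$ Fourier picture and the threshold $\mathbf 1[h\geq 1/2]$ which is natural for $\{0,1\}$-valued $f$ --- this is harmless but should be kept consistent (e.g.\ round to $\mathrm{sign}(h)$ in the $\pm 1$ convention). With these minor repairs the argument is complete.
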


Note that if $A \subset \{0,1\}^n$, and $f = 1_A$, then $\Inf(f) = |\partial A|/2^{n-1}$, so the above theorem can be restated as follows.

\begin{theorem'}
Let $A \subset \{0,1\}^n$. Then for every $\epsilon >0$, there exists $B \subset \{0,1\}^n$ such that $|A \Delta B| \leq \epsilon 2^n$, and $B$ is a union of subcubes which all have fixed-coordinate set $J$, for some $J \subset [n]$ with
$$|J| \leq \exp(C_0 |\partial A|/(2^{n-1}\epsilon)),$$
where $C_0$ is an absolute constant.
\end{theorem'}
(Recall that a {\em subcube} of $\{0,1\}^n$ is a set of the form
$$\{x \in \{0,1\}^n:\ x_{i} = a_i\ \forall i \in I\}$$
where $I \subset [n]$ and $a_i \in \{0,1\}$ for all $i \in I$. The coordinates in $I$ are called the {\em fixed coordinates} of the subcube, and the rest are called the {\em moving coordinates}. The {\em dimension} of the subcube is $n-|I|$, the number of moving coordinates.)

In this paper, we prove the following generalisation of Theorem \ref{thm:friedgut-junta} for the $\ell^1$-grid.
\begin{theorem}
\label{thm:main}
Let $A \subset [k]^n$. Then for any $\epsilon >0$, $1_A$ is $\epsilon$-close to some $M$-junta $g:[k]^n \to \{0,1\}$ , where
$$M \leq \exp(C_1|\partial A|/(k^{n-1}\epsilon))$$
for some absolute constant $C_1$.
\end{theorem}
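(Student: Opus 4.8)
The plan is to adapt the Fourier-analytic proof of the cube Junta theorem (Theorem~\ref{thm:friedgut-junta}) to $[k]^n$, using the \emph{junta} Fourier expansion rather than the eigenbasis of the grid Laplacian, so that no powers of $k$ are lost in the main estimates. Fix an orthonormal basis $\chi_0\equiv 1,\chi_1,\dots,\chi_{k-1}$ of $\mathbb{R}[[k]]$ (with respect to the uniform measure), let $\phi_\alpha(x)=\prod_{i:\,\alpha_i\neq 0}\chi_{\alpha_i}(x_i)$ for $\alpha\in\{0,1,\dots,k-1\}^n$, and write $\hat f(\alpha)=\langle f,\phi_\alpha\rangle$ and $\deg\alpha=|\{i:\alpha_i\neq 0\}|$. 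For $J\subseteq[n]$ the operator $f\mapsto\mathbb{E}[f\mid x_J]$ is precisely the projection onto $\{\phi_\alpha:\{i:\alpha_i\neq0\}\subseteq J\}$; and since for Boolean $f,g$ we have $\|f-g\|_1=\|f-g\|_2^2$, while rounding a $[0,1]$-valued function to $\{0,1\}$ at most doubles its $L^2$-distance to a given Boolean function, it suffices to produce a set $J$ of size $\exp\!\big(O(|\partial A|/(k^{n-1}\epsilon))\big)$ with $\|1_A-\mathbb{E}[1_A\mid x_J]\|_2^2\le\epsilon/4$.

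Set $\Lambda_j f:=f-\mathbb{E}[f\mid x_{[n]\setminus\{j\}}]$ and $\Inf_j(f):=\|\Lambda_j f\|_2^2$. Two elementary facts are the engine of the argument. First, $\sum_j\Inf_j(f)=\sum_\alpha(\deg\alpha)\,\hat f(\alpha)^2$, so by a Markov-type inequality $\|1_A-(1_A)^{<d}\|_2^2\le\big(\sum_j\Inf_j(1_A)\big)/d$. Second, $\Inf_j(1_A)=\mathbb{E}_x\big[\Var(1_A\text{ on the fibre }a_j^x)\big]$, and a fibre of positive variance contains at least one boundary edge, whence $\Inf_j(1_A)\le\tfrac14|\partial_j A|/k^{n-1}$, where $\partial_j A$ is the set of direction-$j$ boundary edges; summing, $\sum_j\Inf_j(1_A)\le|\partial A|/(4k^{n-1})$. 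Thus a suitable $d=\Theta(|\partial A|/(k^{n-1}\epsilon))$ makes $\|1_A-(1_A)^{<d}\|_2^2\le\epsilon/8$ --- and, crucially, this truncation costs no factor of $k$. Now put $J=\{j:\Inf_j(1_A)>\tau\}$ (so $|J|\le|\partial A|/(4k^{n-1}\tau)$) and estimate
\[
\|1_A-\mathbb{E}[1_A\mid x_J]\|_2^2=\sum_{\{i:\alpha_i\neq0\}\not\subseteq J}\hat f(\alpha)^2\ \le\ \underbrace{\|1_A-(1_A)^{<d}\|_2^2}_{\le\,\epsilon/8}\ +\ \sum_{j\notin J}\big\|(\Lambda_j 1_A)^{<d}\big\|_2^2 ,
\]
where each low-degree $\alpha$ with $\{i:\alpha_i\neq0\}\not\subseteq J$ has been charged to some $j\in\{i:\alpha_i\neq0\}\setminus J$.

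Everything now reduces to a ``level-$d$ inequality'' for $\Lambda_j 1_A$, of the shape $\|(\Lambda_j 1_A)^{<d}\|_2^2\le K^{\,d}\,\Inf_j(1_A)^{1+\delta}$ with $K$ and $\delta>0$ \emph{absolute} constants: granting it, the second sum above is at most $K^d\tau^\delta\sum_j\Inf_j(1_A)$, so choosing $\tau$ to be a suitable negative power of $K^d$ times $\epsilon k^{n-1}/|\partial A|$ brings it below $\epsilon/8$ while keeping $|J|=\exp(O(d))=\exp(O(|\partial A|/(k^{n-1}\epsilon)))$, which is exactly the claimed bound; the rounding step of the first paragraph then finishes the proof. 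On the discrete cube this is precisely the hypercontractive estimate of \cite{friedgut-junta} applied to the $\{-1,0,1\}$-valued derivative $\Lambda_j 1_A$, all of whose $L^p$ norms coincide. Here lies the one genuine difficulty: on $[k]$ the derivative $\Lambda_j 1_A$ is only $[-1,1]$-valued, and the hypercontractive inequality for $[k]$ treated as a single coordinate degrades with $k$, giving only $\exp(O(|\partial A|\log k/(k^{n-1}\epsilon)))$ --- too weak for an absolute $C_1$. To remove the $\log k$ one must use that $[k]$ is a \emph{path}: the restriction of $\Lambda_j 1_A$ to a direction-$j$ fibre is an indicator of a union of intervals minus its mean; the total number of such intervals, over all fibres in direction $j$, is bounded by $|\partial_j A|$; and each such section splits into a bounded signed sum of half-line indicators $1_{[1,c]}$, each of which behaves like a single monotone Boolean coordinate. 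One can then route the estimate through a genuinely two-valued (cube) hypercontractive inequality, with the interval count playing the role that the $\{-1,0,1\}$-valuedness of the derivative plays on the cube. (Should this prove awkward, the alternative is an induction on $k$: coarsen $[k]$ to a bounded alphabet by merging consecutive blocks and replacing $A$ in each resulting cell by a majority value, verify by a compression/exchange argument that the normalised edge-boundary does not grow, and apply the theorem for the smaller alphabet.)
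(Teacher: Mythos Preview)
You have set up the variational-influence framework correctly, bounded $\sum_j\Inf_j(1_A)\le|\partial A|/(4k^{n-1})$ correctly, and correctly isolated the crux: a level-$d$ inequality $\|(\Lambda_j 1_A)^{<d}\|_2^2\le K^d\,\Inf_j(1_A)^{1+\delta}$ with $K,\delta$ \emph{absolute}. But this is precisely the step you have not proved, and neither of your suggested repairs closes the gap. The half-line decomposition is fibrewise: on each direction-$j$ fibre the restriction of $1_A$ is an alternating sum of half-line indicators, but $\Lambda_j 1_A$ is a global function on $[k]^n$, and there is no evident way to glue these local decompositions into an object on which cube-strength hypercontractivity (with $\rho$ bounded away from $0$ independently of $k$) applies; the thresholds $c$ vary from fibre to fibre, so you cannot encode the $j$th coordinate by a fixed bounded family of Boolean bits. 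As for the coarsening induction, if each halving of the alphabet costs a fixed constant in the exponent, then after $\log_2 k$ halvings you are back to the $\exp\!\big(O(\log k)\cdot|\partial A|/(k^{n-1}\epsilon)\big)$ bound you wished to beat; a single-step coarsening to a bounded alphabet via majority rounding is neither obviously boundary-non-increasing (in the normalized sense) nor obviously $L^1$-close to the original, and you have not supplied the ``compression/exchange argument'' that would make it so.

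The paper's proof is genuinely different and sidesteps the hypercontractivity issue on $[k]$ entirely. It never does Fourier analysis on $[k]^n$. For $k=2^s$ it pulls $f=1_A$ back along the coordinatewise binary-expansion bijection $\phi_{(n)}:\{0,1\}^{sn}\to[k]^n$ to obtain a Boolean function $\tilde f$ on the cube $\{0,1\}^{sn}$, and proves the elementary counting bound (Claim~\ref{claim:binary}): if a grid fibre carries $m$ edges of $\partial A$, the corresponding $s$-dimensional subcube carries at most $(k-1)m$ cube-boundary edges. Averaging gives $\Inf(\tilde f)\le 2|\partial A|/k^{n-1}$ with the right normalization and no $\log k$, after which Theorem~\ref{thm:friedgut-junta} applies to $\tilde f$ as a black box; the resulting junta on $\{0,1\}^{sn}$ depends on at most $M$ bits, hence on at most $M$ of the $[k]$-coordinates. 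For general $k$ the paper \emph{enlarges} rather than coarsens: it embeds $[k]^n$ into $[l]^n$ for a large power-of-two $l$ by blowing each point up to a rectangular block, so that $\breve A$ is block-constant; the normalized boundary changes by at most a factor $2$, and the approximating junta can be taken block-constant and hence descends to $[k]^n$. The point is that a fixed, fibre-independent encoding of $[k]$ by $\log_2 k$ bits, together with a one-paragraph combinatorial lemma, replaces the level-$d$ inequality you were unable to establish.
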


This immediately implies an analogous result for the torus. The {\em torus} $T_{k,n}$ is the graph with vertex-set $\mathbb{Z}_k^n$ and edge-set
$$\{\{x,y\} \in (\mathbb{Z}_k^n)^{2}:\ \exists j \in [n]:\ |y_j - x_j|' = 1,\ x_i = y_i\ \forall i \neq j\},$$
where $|s-t|'$ denotes the cyclic distance from $s$ to $t$ in $\mathbb{Z}_k$. If $A \subset \mathbb{Z}_k^n$, we let $\partial'(A)$ denote the edge-boundary of $A$ in the torus, and we let $\partial A$ denote the edge-boundary of $A$ in the grid $G_{k,n}$ (identifying $\mathbb{Z}_k$ with $[k]$ in the natural way). Note that since $G_{k,n}$ is a subgraph of $T_{k,n}$, if $A \subset \mathbb{Z}_k^n$, then $|\partial A| \leq |\partial' A|$. The following is therefore immediate from Theorem \ref{thm:main}.

\begin{corollary}
\label{corr:torus-junta}
Let $A \subset \mathbb{Z}_k^n$. Then for any $\epsilon >0$, $1_A$ is $\epsilon$-close to some $M$-junta $g:\mathbb{Z}_k^n \to \{0,1\}$ , where
$$M \leq \exp(C_1|\partial'(A)|/(k^{n-1}\epsilon))$$
for some absolute constant $C_1$.
\end{corollary}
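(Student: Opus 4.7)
The plan is very short: the corollary is meant to follow immediately from Theorem \ref{thm:main} by comparing the torus boundary with the grid boundary, so the only thing to check is that replacing $\partial A$ by $\partial' A$ is a legitimate weakening of the hypothesis of Theorem \ref{thm:main}.

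First I would identify $\mathbb{Z}_k$ with $[k]$ in the obvious way, so that $\mathbb{Z}_k^n$ and $[k]^n$ share the same vertex set. With this identification, every edge of $G_{k,n}$ is also an edge of $T_{k,n}$: the torus simply adds the ``wrap-around'' edges $\{(x_1,\dots,x_{j-1},1,x_{j+1},\dots,x_n),(x_1,\dots,x_{j-1},k,x_{j+1},\dots,x_n)\}$ for each $j\in[n]$ and each choice of the remaining coordinates. Consequently, any edge counted by $\partial A$ (an edge of the grid with one endpoint in $A$ and one in $A^c$) is also counted by $\partial' A$, giving the pointwise inequality $|\partial A|\le |\partial' A|$, as already noted in the excerpt.

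Next I would apply Theorem \ref{thm:main} to $A$, regarded as a subset of $[k]^n$ under the identification above. This produces an $M$-junta $g:[k]^n\to\{0,1\}$ with
\[
\|1_A-g\|_1\le\epsilon,\qquad M\le \exp\bigl(C_1|\partial A|/(k^{n-1}\epsilon)\bigr).
\]
Since $|\partial A|\le|\partial' A|$, and since $\exp$ is monotone, the same $g$ satisfies $M\le\exp\bigl(C_1|\partial' A|/(k^{n-1}\epsilon)\bigr)$. Transporting $g$ back through the identification $\mathbb{Z}_k\cong[k]$ yields a junta on $\mathbb{Z}_k^n$ with exactly the required properties, since the $L^1$-norm is defined purely in terms of the common vertex set and is unaffected by which graph structure one places on it.

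There is essentially no obstacle here: all of the combinatorial content sits inside Theorem \ref{thm:main}, and the reduction is just the observation that adding edges to the host graph can only enlarge the edge-boundary. The only thing one needs to be slightly careful about is the trivial point that $\epsilon$-closeness and the notion of ``$M$-junta'' depend on the vertex set and the product structure on coordinates, not on the adjacency relation, so they transfer between $G_{k,n}$ and $T_{k,n}$ without modification.
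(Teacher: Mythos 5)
Your proposal is correct and is exactly the paper's own argument: after identifying $\mathbb{Z}_k$ with $[k]$, one notes that $G_{k,n}$ is a subgraph of $T_{k,n}$, so $|\partial A|\le|\partial' A|$, and the bound from Theorem~\ref{thm:main} transfers by monotonicity of $\exp$. Nothing to add.
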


\subsection{Lipschitz maps between discrete tori}

We will use Corollary \ref{corr:torus-junta} to prove a structure theorem for Lipschitz maps between two discrete tori. To state it, we need a little more terminology. If $x \in \mathbb{Z}_k^n$, we define its {\em $L^1$-norm} by
$$||x||_1 = \frac{1}{n} \sum_{i=1}^n |x_i|',$$
where $|\cdot|'$ denotes cyclic distance from $0$ in $\mathbb{Z}_k$. We say a function $f: \mathbb{Z}_k^{n} \to \mathbb{Z}_l^{m}$ is $\alpha$-{\em Lipschitz with respect to the $L^1$-norm} if
$$||f(x)-f(y)||_1 \leq \alpha ||x-y||_1\quad \forall x,y \in \mathbb{Z}_k^n.$$
We prove the following.

\begin{theorem}
\label{thm:lipschitz-tori}
Suppose $f = (f_1,\ldots,f_m): \mathbb{Z}_k^{n} \to \mathbb{Z}_l^{m}$ is $\alpha$-Lipschitz with respect to the $L^1$-norm. Then for any $\delta,\epsilon >0$, there are at least $(1-\delta)m$ coordinates $i \in [m]$ such that $f_i$ is $\epsilon$-close to some $M$-junta $g_i: \mathbb{Z}_k^n \to \mathbb{Z}_l$, where
$$M \leq l\exp(C_2 \alpha k / (\delta \epsilon)),$$
and $C_2$ is an absolute constant. (In fact, one can take $C_2 = 4C_1$, where $C_1$ is the constant from Theorem \ref{thm:main}.)
\end{theorem}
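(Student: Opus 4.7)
The plan is to use the Lipschitz hypothesis to bound, on average over $i \in [m]$, the total torus edge-boundary of the level sets of $f_i$; apply Corollary~\ref{corr:torus-junta} separately to each such level set with a carefully chosen error budget; and splice the resulting Boolean juntas into a single $\mathbb{Z}_l$-valued junta that approximates $f_i$.

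On any edge $\{x,y\}$ of $T_{k,n}$ we have $\|x-y\|_1 = 1/n$, so the Lipschitz condition gives $\sum_{i=1}^m |f_i(x)-f_i(y)|' \leq \alpha m/n$, and summing over all $nk^n$ torus edges yields $\sum_i \sum_{\{x,y\}} |f_i(x)-f_i(y)|' \leq \alpha m k^n$. Writing $A_{i,j} := f_i^{-1}(\{j\})$ for $j \in \mathbb{Z}_l$, an edge lies in $\partial' A_{i,j}$ iff exactly one endpoint has $f_i$-value $j$, hence $E_i := \sum_{j} |\partial' A_{i,j}|$ is twice the number of edges on which $f_i$ is non-constant, and in particular $E_i \leq 2 \sum_{\{x,y\}} |f_i(x)-f_i(y)|'$. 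Thus $\sum_i E_i \leq 2\alpha m k^n$, and Markov's inequality supplies a set $I \subset [m]$ with $|I| \geq (1-\delta) m$ on which $E_i \leq 2\alpha k^n/\delta$.

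Fix $i \in I$. I apply Corollary~\ref{corr:torus-junta} to every level set $A_{i,j}$, but with the \emph{non-uniform} error budget $\epsilon_j := \epsilon |\partial' A_{i,j}| / E_i$ (which sums to $\epsilon$; levels with zero boundary need no approximation). This yields a junta $g_{i,j} : \mathbb{Z}_k^n \to \{0,1\}$ depending on a set $J_{i,j}$ of size at most
$$\exp\bigl(C_1 |\partial' A_{i,j}|/(k^{n-1}\epsilon_j)\bigr) = \exp\bigl(C_1 E_i/(k^{n-1}\epsilon)\bigr),$$
where the factor $|\partial' A_{i,j}|$ cancels. This cancellation is the whole point of splitting the error proportionally rather than uniformly: a uniform choice $\epsilon_j = \epsilon/l$ would leave a spurious factor $l$ inside the exponent and destroy the shape of the theorem. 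Setting $J_i := \bigcup_j J_{i,j}$ then gives $|J_i| \leq l \exp(C_1 E_i /(k^{n-1}\epsilon)) \leq l\exp(2C_1 \alpha k /(\delta \epsilon))$, which is the claimed bound (up to the absolute constant).

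Finally I assemble the $g_{i,j}$ into a single $J_i$-junta $g_i : \mathbb{Z}_k^n \to \mathbb{Z}_l$ by declaring $g_i(x) := j$ whenever $g_{i,j}(x)=1$ and $g_{i,j'}(x)=0$ for all $j' \neq j$, with an arbitrary default value elsewhere. Any $x$ with $f_i(x) \neq g_i(x)$ is a disagreement point for some $g_{i,j}$ versus $1_{A_{i,j}}$: either $g_{i,f_i(x)}(x) \neq 1$ (so $g_{i,f_i(x)}$ disagrees with $1_{A_{i,f_i(x)}}$) or some $g_{i,j'}(x) = 1$ with $j' \neq f_i(x)$ (so $g_{i,j'}$ disagrees with $1_{A_{i,j'}}$). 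A union bound then gives $\textrm{Pr}_x [f_i(x) \neq g_i(x)] \leq \sum_j \epsilon_j = \epsilon$. I expect the main technical point to be the non-uniform error allocation above: without it, the theorem's bound is not achievable; once that idea is in place, the remaining steps (Markov, gluing, union bound) amount to bookkeeping.
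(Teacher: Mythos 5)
Your plan (decompose into level sets, allocate error proportionally to boundary mass, apply Markov to the total torus boundary, glue the resulting Boolean juntas) is a natural route, and the boundary-mass bookkeeping is correct. But the final step has a genuine gap, and it is exactly the issue the paper's proof is engineered to avoid.

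Recall that in this paper, $\epsilon$-closeness for $\mathbb{Z}_l$-valued functions is measured in the cyclic $L^1$-norm: $\|f_i - g_i\|_1 = \frac{1}{k^n}\sum_x |f_i(x)-g_i(x)|'$, where $|\cdot|'$ is cyclic distance in $\mathbb{Z}_l$ (this is the norm used throughout Section~\ref{sec:Lipschitz}, in particular in Eq.~(\ref{eq:norm-relation})). Your union bound gives $\textrm{Pr}_x[f_i(x)\neq g_i(x)] \leq \epsilon$, but at a disagreement point $x$, your $g_i(x)$ is an arbitrary default value, so $|f_i(x)-g_i(x)|'$ can be as large as $\lfloor l/2\rfloor$. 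You therefore only get $\|f_i - g_i\|_1 \leq \lfloor l/2\rfloor \epsilon$. To recover a bound of $\epsilon$ you would have to shrink your total error budget to $\epsilon/\lfloor l/2\rfloor$, i.e.\ take $\epsilon_j := \epsilon |\partial' A_{i,j}|/(E_i \lfloor l/2\rfloor)$, which reintroduces a factor of $l$ \emph{inside} the exponent $\exp(C_1 E_i \lfloor l/2\rfloor/(k^{n-1}\epsilon))$ --- precisely the dependence the theorem forbids. The root problem is that plain level sets $A_{i,j}$ do not give a decomposition of the cyclic metric: $\sum_j |1_{A_{i,j}}(x) - 1_{B_j}(x)|$ only records \emph{whether} two $\mathbb{Z}_l$-values differ, not \emph{by how much}.

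The paper's proof of Theorem~\ref{thm:sum-inf} avoids this by decomposing into half-arc indicators $h^{(t)}(x) = 1\{h(x) \in \{t,t+1,\ldots,t+\lfloor l/2\rfloor -1\}\}$, which satisfy the exact identity $|h(x)-h(y)|' = \tfrac{1}{2}\sum_{t \in \mathbb{Z}_l} |h^{(t)}(x)-h^{(t)}(y)|$, and hence $\|h-\tilde h\|_1 = \tfrac{1}{2}\sum_t \|h^{(t)}-\tilde h^{(t)}\|_1$. This gives an equivalence (not just an upper bound) between the cyclic $L^1$ distance and a sum of Boolean $L^1$ distances, so no factor of $l$ is lost when reassembling. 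The price is that the sets $A^{(t)}$ are no longer disjoint level sets, so one cannot simply declare $g_i(x)=t$ on ``unanimous'' points; instead the paper takes the $J$-junta $h$ minimizing $\sum_t \|h^{(t)}-g_t\|_1$ and uses an averaging argument over restrictions to show this minimum is at most $\sum_t \|f^{(t)}-g_t\|_1$. Your proportional-allocation idea (which the paper also uses) and your Markov step are fine; the missing ingredient is the arc decomposition, or some other exact splitting of the cyclic metric into Boolean pieces.
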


This can be seen as a discrete, quantitative analogue of the following structure theorem of Austin, concerning Lipschitz maps between two `solid cubes'. If $x \in [0,1]^n$, we define its {\em $L^1$-norm} by
$$||x||_1 = \frac{1}{n} \sum_{i=1}^{n} |x_i|.$$
We say that a function $f:[0,1]^n \to [0,1]^m$ is $\alpha$-{\em Lipschitz with respect to the $L^1$-norm} if
$$||f(x)-f(y)||_1 \leq \alpha||x-y||_1\quad \forall x,y \in [0,1]^n.$$
In \cite{austin}, Austin proves the following.

\begin{theorem}[Austin]
\label{thm:austin}
Let $m,n \in \mathbb{N}$ with $m/n = \gamma$. Then for every $\epsilon >0$ and every $C >0$, there exists an integer $q$ depending only upon $\alpha$, $\gamma$ and $\epsilon$, with the following property. If a function $f: [0,1]^n \to [0,1]^m$ is $\alpha$-Lipschitz with respect to the $L^1$-norm, then there exists another $\alpha$-Lipschitz function $g = (g_1,\ldots,g_m):\ [0,1]^n \to [0,1]^m$, such that
$$\int_{[0,1]^n}||f(x)-g(x)||_1 \mathrm{d}x < \epsilon,$$
and such that each $g_i$ depends upon at most $q$ coordinates in $[n]$.
\end{theorem}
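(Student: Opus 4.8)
The plan is to deduce Theorem~\ref{thm:austin} from the grid junta theorem (Theorem~\ref{thm:main}) by discretising the cube $[0,1]^n$, together with the coarea formula; the one genuinely delicate point will be producing a limiting map that is $\alpha$-Lipschitz exactly, rather than merely $O_{\alpha,\gamma,\epsilon}(\alpha)$-Lipschitz. The first ingredient is a continuous junta theorem on $[0,1]^n$: if $B \subseteq [0,1]^n$ is measurable with perimeter $\mathrm{Per}(B) \le P$ (perimeter normalised as the sum over directions $j'$ of the expected number of times a random line in direction $j'$ meets $\partial B$, so that it equals $\lim_k |\partial B'|/k^{n-1}$ below), then for every $\rho>0$ the indicator $1_B$ is $\rho$-close in $L^1([0,1]^n)$ to the indicator of a set depending on at most $\exp(CP/\rho)$ coordinates. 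To see this, cut $[0,1]^n$ into the $k^n$ sub-cubes of side $1/k$, let $B' \subseteq [k]^n$ consist of those sub-cubes more than half of whose volume lies in $B$, and let $B''\subseteq[0,1]^n$ be the union of the sub-cubes in $B'$; standard stability estimates give $|\partial B'|/k^{n-1} \le C'\,\mathrm{Per}(B)$ and $\|1_B - 1_{B''}\|_{L^1} \le C'\,\mathrm{Per}(B)/k \to 0$, and Theorem~\ref{thm:main} applied to $B'\subseteq[k]^n$ with $k$ large supplies a junta of the required arity, the bound $\exp(C_1|\partial B'|/(k^{n-1}\cdot\rho/2))$ staying bounded as $k\to\infty$. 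It is convenient to route through Theorem~\ref{thm:main} rather than Theorem~\ref{thm:lipschitz-tori}: here we only approximate a $\{0,1\}$-valued function, so there is no target alphabet to round off, and using the grid (rather than a torus) avoids any cube/torus mismatch.

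Next, slice each coordinate. By Rademacher's theorem and the Lipschitz hypothesis, $\sum_{i}|\partial_{j'}f_i(x)| \le \alpha m/n$ for a.e.\ $x$ and every $j'$; summing over $j'$ gives $\sum_i V(f_i) \le \alpha m$, where $V(f_i) := \sum_{j'}\|\partial_{j'}f_i\|_{L^1}$ is the total variation of $f_i$, normalised so that the coarea formula reads $V(f_i) = \int_0^1 \mathrm{Per}(\{f_i>t\})\,dt$. By Markov, at least a $(1-\delta)$-fraction of indices $i$ are \emph{good} in the sense that $V(f_i) \le \alpha/\delta$; for such an $i$ at least a $(1-\eta)$-fraction of levels $t$ satisfy $\mathrm{Per}(\{f_i>t\}) \le \alpha/(\delta\eta)$. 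For each good $i$, pick $r = \lceil C/\epsilon\rceil$ such good levels $0<t_1<\dots<t_r<1$ spaced about $1/r$ apart (possible once $\eta \ll 1/r$), apply the continuous junta theorem with parameter $\rho \sim \epsilon$ to each $\{f_i>t_s\}$ to obtain a set $B_{i,s}$ depending on a coordinate set $J_{i,s}$ of size at most $\exp(O(\alpha/(\delta\eta\rho)))$, and let $\hat g_i$ be the layer-cake function built from the $B_{i,s}$. Then $\hat g_i\colon[0,1]^n\to[0,1]$ depends only on $J_i := \bigcup_s J_{i,s}$, which has size at most $q_0 := r\exp(O(\alpha/(\delta\eta\rho)))$ — a quantity depending only on $\alpha,\epsilon$ once $\delta,\eta \sim \epsilon$ — and $\|f_i-\hat g_i\|_{L^1} = O(\epsilon)$. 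For a bad $i$ put $\hat g_i := \int_{[0,1]^n}f_i$, a $0$-junta with $\|f_i-\hat g_i\|_{L^1}\le 1$. Averaging over $i$ already gives an $O(\epsilon)$-close approximation of $f$ by a map that is a junta in each coordinate, but with no Lipschitz control.

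To make the approximation Lipschitz, I would replace $\hat g_i$ by $g_i := \mathrm{E}_{J_i'}f_i$, the average of $f_i$ over the coordinates outside a \emph{pruned} set $J_i' \subseteq J_i$ obtained by discarding from $J_i$ every $j'$ with $\|\partial_{j'}(\mathrm{E}_{J_i}f_i)\|_{L^1} < \tau$ for a threshold $\tau \sim \epsilon/q_0$ (and $g_i := \hat g_i$ for bad $i$). Since $\mathrm{E}_{J}$ is an $L^1$-contraction fixing every junta on $J$, and since averaging out a coordinate never increases the $L^1$-norm of any partial derivative, one checks that $\|f_i - g_i\|_{L^1} = O(\epsilon)$, that $|J_i'|\le q_0$, and — the point of the pruning — that $j'\in J_i'$ forces $\|\partial_{j'}f_i\|_{L^1}\ge\tau$. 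Combined with $\sum_i\|\partial_{j'}f_i\|_{L^1}\le\alpha m/n$ this shows that each coordinate $j'$ lies in at most $D := O(\alpha\gamma q_0/\epsilon)$ of the sets $J_i'$; then, using the elementary identity that the Lipschitz constant of $g$ equals $\max_{j'}\Lambda_{j'}(g)$, the largest coordinate-wise Lipschitz constant, together with $\partial_{j'}g_i \equiv 0$ when $j'\notin J_i'$ and $|\partial_{j'}g_i| \le \alpha m/n$ pointwise, one obtains $\Lambda_{j'}(g) \le D\alpha$ for every $j'$. Thus $g$ is a junta in each coordinate with arity $\le q_0$, satisfies $\tfrac1m\sum_i\|f_i-g_i\|_{L^1} = O(\epsilon)$, and is $O_{\alpha,\gamma,\epsilon}(\alpha)$-Lipschitz — which already proves a version of Theorem~\ref{thm:austin} with the Lipschitz constant enlarged by a factor depending only on $\alpha,\gamma,\epsilon$.

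The main obstacle is to remove this multiplicative factor $D$ and attain the Lipschitz constant $\alpha$ exactly, as Austin's statement requires. This cannot be achieved by a single simultaneous conditioning — the example $f_i(x) = x_{\sigma(i)}$ for a fixed injection $\sigma$ (which is $(n/m)$-Lipschitz but has pairwise disjoint relevant coordinate sets) shows that no single bounded-size coordinate set can serve all $f_i$ — so the coordinates must be eliminated iteratively, one operation at a time (for instance, averaging out a single coordinate $j$ from exactly those components in which it is unimportant), and the crux is to prove that such an operation does not increase the Lipschitz seminorm. Establishing this contraction property is subtle: it holds in every example I have tried, but the obvious triangle-inequality estimate loses a factor of $2$ at each step, so that a more careful direct-derivative argument, or a wholesale reorganisation of the construction so that the constant $\alpha$ is preserved from the outset — essentially the content of Austin's original argument — would be needed.
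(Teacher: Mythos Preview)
The paper does not prove Theorem~\ref{thm:austin}; it is quoted as a result of Austin \cite{austin} and serves only as motivation for the paper's discrete analogues (Theorems~\ref{thm:lipschitz-tori} and~\ref{thm:lipschitz-grid}). So there is no ``paper's own proof'' to compare against, and what you are really attempting is to rederive Austin's continuous result from the paper's discrete junta theorem.

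Your strategy --- discretise, apply Theorem~\ref{thm:main} to the level sets, then average to recover some Lipschitz control --- is reasonable and, up to the final paragraph, yields essentially a continuous analogue of what the paper actually proves in the discrete setting: an approximation by coordinate-wise juntas of bounded arity, with no (or only inflated) Lipschitz control on the approximant. Indeed, note that the paper's own Theorems~\ref{thm:lipschitz-tori} and~\ref{thm:lipschitz-grid} make \emph{no} Lipschitz claim about the $g_i$ whatsoever; the exact-constant requirement in Austin's statement is a genuinely stronger conclusion.

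You correctly identify the real gap: passing from an $O_{\alpha,\gamma,\epsilon}(\alpha)$-Lipschitz approximant to an $\alpha$-Lipschitz one. Your degree-bounding pruning gives only a multiplicative blow-up $D = O(\alpha\gamma q_0/\epsilon)$, and your proposed fix --- iteratively averaging out single coordinates from selected components and hoping the Lipschitz seminorm is nonincreasing --- is exactly the step you cannot complete: the naive triangle inequality loses a factor of $2$, and there is no reason the operation should contract in general (the components interact through the shared pointwise budget $\sum_i|\partial_{j'}f_i|\le \alpha m/n$, and averaging one component over a coordinate can shift where that budget is spent). So as written the argument proves a weakened version of the theorem but not the theorem itself; obtaining the exact constant $\alpha$ requires Austin's own construction, which is organised from the start to preserve the Lipschitz constant rather than recovering it post hoc.
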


We believe it is slightly more natural to consider Lipschitz functions between discrete tori, rather than between discrete grids, so we have chosen to focus on the former, but in fact, we are also able to prove an analogue of Austin's theorem for Lipschitz functions between discrete $\ell^1$-grids. Naturally, if $f:[k]^n \to [l]^m$, we say that $f$ is {\em $\alpha$-Lipschitz with respect to the $L^1$-norm} if
$$||f(x)-f(y)||_1 \leq \alpha ||x-y||_1\quad \forall x,y \in [k]^n.$$
We prove the following.

\begin{theorem}
\label{thm:lipschitz-grid}
Suppose $f = (f_1,\ldots,f_m): [k]^{n} \to [l]^{m}$ is $\alpha$-Lipschitz with respect to the $L^1$-norm. Then for any $\delta,\epsilon >0$, there are at least $(1-\delta)m$ coordinates $i \in [m]$ such that $f_i$ is $\epsilon$-close to some $M$-junta $g_i: [k]^n \to [l]$, where
$$M \leq (l-1)\exp(C_3 \alpha k / (\delta \epsilon)),$$
and $C_3$ is an absolute constant. (In fact, one can take $C_3 = 2C_1$, where $C_1$ is the constant from Theorem \ref{thm:main}.)
\end{theorem}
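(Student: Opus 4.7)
The plan is to reduce Theorem \ref{thm:lipschitz-grid} to the main junta theorem (Theorem \ref{thm:main}) via the level-set / layer-cake representation of each $f_i$. Throughout, write $[l] = \{1,\ldots,l\}$ and, for each $i \in [m]$ and each threshold $j \in \{1,\ldots,l-1\}$, set
$$A_{i,j} = \{x \in [k]^n : f_i(x) > j\}.$$
The sets $A_{i,1} \supseteq A_{i,2} \supseteq \cdots \supseteq A_{i,l-1}$ encode $f_i$ completely via the identity $f_i(x) = 1 + |\{j : x \in A_{i,j}\}|$, and the key observation is that along any single edge $\{x,y\}$ of $G_{k,n}$ one has the exact telescoping identity $\sum_{j=1}^{l-1} |1_{A_{i,j}}(x)-1_{A_{i,j}}(y)| = |f_i(x)-f_i(y)|$.

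First I would convert the Lipschitz hypothesis into a boundary budget. Summing the telescoping identity over all $n(k-1)k^{n-1}$ edges of $G_{k,n}$ and over all $i \in [m]$, and using that adjacent $x,y$ satisfy $\|x-y\|_1 = 1/n$ and hence $\sum_i |f_i(x)-f_i(y)| \le \alpha m/n$, I obtain
$$\sum_{i=1}^{m} \sum_{j=1}^{l-1} |\partial A_{i,j}| \;\le\; (k-1)k^{n-1}\alpha m.$$
Setting $S_i := \sum_{j=1}^{l-1} |\partial A_{i,j}|/k^{n-1}$, this says $\sum_i S_i \le (k-1)\alpha m < k\alpha m$, so Markov's inequality produces at least $(1-\delta)m$ indices $i \in [m]$ with $S_i \le k\alpha/\delta$. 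Call these the \emph{good} indices.

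Next, for each good $i$, I would apply Theorem \ref{thm:main} to each of its $l-1$ level sets $A_{i,j}$ with an individually chosen tolerance $\epsilon_j$. To keep the junta sizes uniform I allocate the error budget $\epsilon$ proportionally: take $\epsilon_j = \epsilon \cdot (|\partial A_{i,j}|/k^{n-1})/S_i$ whenever $S_i > 0$ (and $\epsilon_j = 0$ otherwise, in which case the level set is trivial). This yields $M_j$-juntas $B_{i,j} \subset [k]^n$ with $\|1_{A_{i,j}} - 1_{B_{i,j}}\|_1 \le \epsilon_j$ and
$$M_j \;\le\; \exp\!\left(C_1 \cdot \frac{|\partial A_{i,j}|}{k^{n-1}\epsilon_j}\right) \;=\; \exp(C_1 S_i / \epsilon) \;\le\; \exp(C_1 k\alpha/(\delta\epsilon)).$$

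Finally I reconstruct $f_i$ from the approximate level sets by setting
$$g_i(x) \;:=\; 1 + |\{j \in [l-1] : x \in B_{i,j}\}|.$$
If $J_{i,j} \subset [n]$ is the coordinate set on which $B_{i,j}$ depends, then $g_i$ depends only on coordinates in $\bigcup_j J_{i,j}$, so it is an $M$-junta with $M \le \sum_j M_j \le (l-1)\exp(C_1 k\alpha/(\delta\epsilon))$; since the $B_{i,j}$ need not be nested I do \emph{not} need to enforce monotonicity, as $g_i$ still takes values in $[l]$. The pointwise bound $|f_i(x)-g_i(x)| \le \sum_j |1_{A_{i,j}}(x)-1_{B_{i,j}}(x)|$ integrates to $\|f_i-g_i\|_1 \le \sum_j \epsilon_j = \epsilon$, giving the required approximation. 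Absorbing the $k-1$ versus $k$ slack into the absolute constant (and, if desired, the mild inefficiency from bounding each $M_j$ by the same quantity) yields an admissible value of $C_3$. I do not expect any serious obstacle: the only delicate point is the proportional allocation of $\epsilon_j$, which is precisely what makes the exponent of each $M_j$ collapse to $C_1 S_i/\epsilon$ and thus lets the total $M$ scale linearly in $l-1$ rather than exponentially in it.
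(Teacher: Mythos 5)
Your proposal is correct and follows essentially the same route as the paper: the paper too encodes each $f_i$ via the level sets $A^{(>t)}=\{x: f_i(x)>t\}$, uses the exact identity $|f_i(x)-f_i(y)|=\sum_t |1_{A^{(>t)}}(x)-1_{A^{(>t)}}(y)|$ on edges to turn the averaged Lipschitz condition into a boundary budget, applies Markov's inequality to locate $(1-\delta)m$ good indices, and then invokes Theorem \ref{thm:main} on each level set with an error tolerance chosen proportionally to that level set's relative boundary size, so that the exponent collapses to a quantity independent of $t$ and the final junta is $g_i = 1+\sum_t 1_{B_{i,t}}$. The only organizational difference is that the paper first factors out this argument into a stand-alone statement (Theorem \ref{thm:sum-inf-grid}) bounding a single function $f:[k]^n\to[l]$ with small averaged edge-variation, and then combines it with the Markov step; you fold the two together. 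Your bookkeeping is, if anything, slightly tighter (you track $(k-1)k^{n-1}$ edges per direction directly rather than bounding $(k-1)k^{n-1}\ge k^n/2$), which is why you land on $C_1$ rather than $2C_1$ in the exponent; either value is admissible.
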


Note that the junta-sizes in Theorems \ref{thm:lipschitz-tori} and \ref{thm:lipschitz-grid} have no dependence upon $\gamma = m/n$, in contrast to the situation in Theorem \ref{thm:austin}.

\subsection{Related work}

Generalizations of Theorem~\ref{thm:friedgut-junta} to more general product spaces, and in particular, to the $\ell^1$-grid, have been studied in several previous papers. The first of them is~\cite{fd}, in which the third author (jointly with Dinur) proved a tight generalization to {\it monotone} functions on the $\ell^1$-grid, and conjectured that the same bound holds in the general (non-monotone) case. In~\cite{hatami}, Hatami disproved this conjecture and proved an alternative generalization of the Junta theorem to the $\ell^1$-grid. In~\cite{keller}, the fourth author proved a refinement of the Junta theorem for monotone subsets of the (solid) cube $[0,1]^n$, which can easily be `discretised' to obtain an analogous statement for subsets of the $\ell^1$-grid. In all of these works, the upper bound on the junta-size is governed by the average of a certain quantity $h$ over all fibres, where the value of $h$ on a fibre depends only upon the measure of the set restricted to this fibre. (Recall that a {\em fibre} is a set in which one coordinate varies freely, and the rest are fixed.) As these results are not directly related to our work, and require some more terminology, we delay their exact description until Section~\ref{sec:refinement}.

The prior work perhaps most closely related to ours is a result of Sachdeva and Tulsiani~\cite{s-t}, who proved a junta theorem for general weak products of graphs. In~\cite{s-t}, the bound on the junta-size is given in terms of the edge-boundary of the set.

To state it, we need some additional terminology. If $G = (V,E)$ is a graph, let $G^{\square n}$ denote the {\em $n$-fold weak product} of $G$, the graph with vertex-set $V^n$ and edge-set
$$E(G^{\square n}) = \{\{x,y\} \in (V^n)^{2}:\ \exists j \in [n]:\ \{x_j,y_j\} \in E(G),\ x_i = y_i\ \forall i \neq j\}.$$
Sachdeva and Tulsiani prove the following.

\begin{theorem}[Sachdeva and Tulsiani]
\label{thm:s-t}
Let $G = (V,E)$ be a finite $d$-regular graph on $k$ vertices with log-Sobolev constant $\rho(G)$. Let $A \subset V^n$, and let $\partial A$ denote the edge-boundary of $A$ in the weak product $G^{\square n}$. Then for any $\epsilon >0$, $1_A$ is $\epsilon$-close to some $M$-junta $g: V^n \to \{0,1\}$, where
$$M \leq \exp\left(\frac{C_2}{\rho(G)} \frac{|\partial A|}{dk^{n} \epsilon}\right),$$
for some absolute constant $C_2$.
\end{theorem}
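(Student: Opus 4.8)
The plan is to imitate Friedgut's proof of Theorem~\ref{thm:friedgut-junta}, with the Fourier analysis of $\{0,1\}^n$ replaced by the spectral theory of $G$ and with the log-Sobolev inequality supplying the required hypercontractivity. First translate the hypothesis into analytic form: put the uniform measure on $V^n$, let $f := 1_A$, and consider the reversible Markov chain on $G^{\square n}$ that picks a uniform coordinate $i\in[n]$ and then moves along a uniform $G$-edge in coordinate $i$. Its Dirichlet form at $f$ equals $|\partial A|$ up to the fixed normalising factor $ndk^{n}$, and decomposes as $\tfrac1n\sum_{i=1}^n \mathcal E_i(f)$, where $\mathcal E_i(f)$ is the average over the remaining coordinates of the $G$-Dirichlet form of the restriction of $f$ to the $i$-th fibre. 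Write $I_i := \mathcal E_i(f)$, so that $\sum_{i=1}^n I_i = |\partial A|/(dk^{n})$ — this is the quantity playing the role of total influence.

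Next set up the analysis on $V^n$. Let $\mathbb R[V] = \bigoplus_\gamma W_\gamma$ be the eigenspace decomposition of the Laplacian of the walk on $G$ (eigenvalues $\gamma\in[0,2]$, with $W_0$ the constants and the least positive eigenvalue at least a constant multiple of $\rho(G)$, by the standard comparison of the spectral gap with the log-Sobolev constant). Tensorising gives $\mathbb R[V^n] = \bigoplus_{\vec\gamma}(W_{\gamma_1}\otimes\cdots\otimes W_{\gamma_n})$; write $f = \sum_{\vec\gamma}f_{\vec\gamma}$ for the induced orthogonal decomposition, put $\mathrm{supp}(\vec\gamma) := \{i : \gamma_i > 0\}$, and define the \emph{spectral level} $\Gamma(\vec\gamma) := \sum_i\gamma_i$. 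Two facts drive the argument: (i) $\sum_{\vec\gamma}\Gamma(\vec\gamma)\|f_{\vec\gamma}\|_2^2 = \sum_i I_i$, which is just the spectral form of the Dirichlet form; and (ii) by Gross's theorem (and tensorisation of the log-Sobolev inequality) the heat semigroup $(P_t)_{t\ge0}$ of the walk on $G$ is hypercontractive, so its tensor power $P_t^{\otimes n}$ — which acts on the $\vec\gamma$-component as multiplication by $e^{-t\Gamma(\vec\gamma)}$ — satisfies $\|P_t^{\otimes n}h\|_q\le\|h\|_2$ whenever $q-1\le e^{2\rho(G)t}$.

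Now run Friedgut's argument. Fix a spectral-level cutoff $\ell$ and an influence cutoff $\tau$; set $J := \{i : I_i\ge\tau\}$, so $|J|\le\tau^{-1}\sum_i I_i$; and let $g := \mathbb E[f\mid(x_i)_{i\in J}] = \sum_{\mathrm{supp}(\vec\gamma)\subseteq J}f_{\vec\gamma}$, a $J$-junta valued in $[0,1]$. Then
$$\|f-g\|_2^2 = \sum_{\mathrm{supp}(\vec\gamma)\not\subseteq J}\|f_{\vec\gamma}\|_2^2 \le \sum_{\Gamma(\vec\gamma)>\ell}\|f_{\vec\gamma}\|_2^2 + \sum_{\substack{\mathrm{supp}(\vec\gamma)\not\subseteq J\\ \Gamma(\vec\gamma)\le\ell}}\|f_{\vec\gamma}\|_2^2 .$$
By Markov's inequality and fact~(i) the first (high-level) sum is at most $\ell^{-1}\sum_i I_i$, which is $\le\epsilon/8$ once $\ell\asymp\epsilon^{-1}\sum_i I_i\asymp|\partial A|/(dk^n\epsilon)$. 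For the second (low-level) sum, assign to each surviving $\vec\gamma$ a coordinate $i(\vec\gamma)\in\mathrm{supp}(\vec\gamma)\setminus J$ and group: the sum is at most $\sum_{i\notin J}\sum_{\vec\gamma:\,i\in\mathrm{supp}(\vec\gamma),\,\Gamma(\vec\gamma)\le\ell}\|f_{\vec\gamma}\|_2^2$, the low-level, coordinate-$i$-active mass of $f$. Bound this inner sum by applying hypercontractivity (fact~(ii)) to the family of bounded \emph{local edge-derivatives} $\{\nabla_i^e f : e\in E(G)\}$, where $\nabla_i^e f(x) := f(x) - f(x')$ with $x'$ obtained from $x$ by moving coordinate $i$ across the edge $e$ (and $\nabla_i^e f(x) := 0$ if $x_i\notin e$): each $\nabla_i^e f$ is $\{-1,0,1\}$-valued, so that $\|\nabla_i^e f\|_1 = \|\nabla_i^e f\|_2^2$ and $\sum_e\|\nabla_i^e f\|_2^2\asymp d\,I_i$, and hypercontractivity is used on them exactly as on the cube-derivative $D_i f$, with the factor $e^{-t\Gamma(\vec\gamma)}$ and the bound $\Gamma(\vec\gamma)\le\ell$ supplying the analogue of Bonami--Beckner's low-degree reconstruction with blow-up $e^{O(t\ell)}$. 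Summing over $i\notin J$ and using $I_i<\tau$ there bounds the low-level sum by $e^{O(t\ell)}\tau^{\theta}$ times a modest factor, for some $\theta=\theta(t)\in(0,1)$ behaving like $\min\{\rho(G)t,1\}$; choosing $t$ with $\rho(G)t\asymp1$ and then $\tau$ so that this is $\le\epsilon/8$, and carrying the arithmetic through, gives $|J|\le\tau^{-1}\sum_i I_i\le\exp\!\big(O(|\partial A|/(\rho(G)dk^n\epsilon))\big)$. Finally replace $g$ by the Boolean $J$-junta $\tilde g := 1_{\{g\ge1/2\}}$: pointwise $|f-\tilde g|^2\le4|f-g|^2$, so $\|f-\tilde g\|_1 = \|f-\tilde g\|_2^2\le4\|f-g\|_2^2\le\epsilon$ after halving $\epsilon$ at the outset, and $\tilde g$ is the required $M$-junta.

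The step I expect to be the main obstacle is the hypercontractivity estimate for the low-level term, together with extracting from it the stated dependence on $\rho(G)$. On $\{0,1\}^n$ one uses crucially that the discrete derivative $D_i f$ is $\{-1,0,1\}$-valued, so its $L^{1+\delta}$-norm is a trivial power of its $L^2$-norm squared, which equals the influence of coordinate $i$; for a general $d$-regular $G$ the function $f-\mathbb E_i f$ is merely $[-1,1]$-valued and its $L^1$-norm is tied to $I_i$ only after a loss controlled by the geometry of $G$, so one must work with the genuinely $\{-1,0,1\}$-valued family $\{\nabla_i^e f\}_e$ and verify that it reconstructs the relevant part of $f$ without that loss. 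Organising the truncation around the spectral level $\Gamma(\vec\gamma)$ rather than the combinatorial support size $|\mathrm{supp}(\vec\gamma)|$ is what keeps the spectral gap from entering both the high-level truncation and the reconstruction, and hence what yields the clean $1/\rho(G)$ in the exponent. Everything else — the constants in Gross's theorem and the gap/log-Sobolev comparison, the optimisation of $\ell$ and $\tau$, and the normalisation relating $|\partial A|$ to the Dirichlet form — is routine and parallels the cube case.
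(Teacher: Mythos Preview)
The paper does not prove this statement: Theorem~\ref{thm:s-t} is quoted from Sachdeva and Tulsiani \cite{s-t} as prior work, without proof, and is used only to contextualise Corollary~\ref{corr:torus-junta} (which improves its torus specialisation by a factor of $k$ in the exponent). So there is no ``paper's own proof'' to compare your proposal against.

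That said, your sketch is a faithful outline of the Sachdeva--Tulsiani argument itself: set up coordinate influences as Dirichlet forms, decompose $\mathbb{R}[V^n]$ via the tensor power of the eigenspace decomposition of the $G$-Laplacian, use Gross's equivalence between the log-Sobolev inequality and semigroup hypercontractivity (which tensorises), and then run Friedgut's high/low split with a Markov bound on the high-level part and a hypercontractive estimate on the low-level part. Your identification of the genuine issue --- that for general $G$ the centred function $f - \mathbb{E}_i f$ is only $[-1,1]$-valued rather than $\{-1,0,1\}$-valued, so one must pass to edge-derivatives (or otherwise recover the $\|\cdot\|_1 \le \|\cdot\|_2^2$ trick) --- is exactly the technical point that distinguishes the general-graph case from the cube. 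Organising the truncation by spectral level $\Gamma(\vec\gamma)$ rather than support size is also the right move to isolate $\rho(G)$ cleanly. The only thing to be careful about in a full write-up is checking that the family $\{\nabla_i^e f\}_e$ really reconstructs the coordinate-$i$-active low-level mass up to harmless constants; this works, but requires a short computation relating $\sum_e \|\nabla_i^e f\|_2^2$ to the spectral components supported on coordinate $i$.
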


(In fact, Sachdeva and Tulsiani prove a generalisation of this result, for reversible Markov chains on a finite state space.) Since the torus $T_{k,n}$ is precisely the $n$-fold weak product of the $k$-cycle $C_k$, which has log-Sobolev constant $\rho(C_k) = \Theta(1/k^2)$, Theorem \ref{thm:s-t} implies the following.

\begin{theorem'}
Let $A \subset \mathbb{Z}_k^n$. Then for any $\epsilon >0$, $1_A$ is $\epsilon$-close to some $M$-junta $g:\mathbb{Z}_k^n \to \{0,1\}$ , where
$$M \leq \exp(C_3|\partial'(A)|/(k^{n-2}\epsilon))$$
for some absolute constant $C_3$.
\end{theorem'}

Our Corollary \ref{corr:torus-junta} improves this by a factor of $k$ in the exponent.

\subsection{A refined junta theorem}
In Section~\ref{sec:refinement}, we prove a refinement of our junta theorem for the $\ell^1$-grid. To state it, we need some more definitions. If $S \subset [k]$, we let
$$\nu(S) = \frac{|S|}{k}\left(1-\frac{|S|}{k}\right),$$
and we let $\partial S$ denote the edge-boundary of $S$ in the 1-dimensional grid-graph $G_{k,1}$ on vertex-set $[k]$. We define
$$h^{*}: \mathcal{P}([k]) \to \mathbb{R};\quad h^{*}(S) = \begin{cases} 0 & \textrm{if } S = \emptyset \textrm{ or } S = [k];\\
\nu(S) \log_2 (|\partial S|/\nu(S)) & \textrm{otherwise.}\end{cases}$$
Note that $\nu(S)$ is simply the variance of $1_S$ with respect to the uniform measure on $[k]$.

\begin{theorem'}
Let $A \subset [k]^n$. Then for any $\epsilon >0$, $1_A$ is $\epsilon$-close to some
$M$-junta $g:[k]^n \to \{0,1\}$ , with
$$M \leq \exp \left( \frac{C_4}{\epsilon} \sum_{j=1}^n \mathbb{E}_{x \in [k]^{n-1}} h^{*}(A \cap a_j^x) \right),$$
where $C_4$ is an absolute constant. Here, $a_j^x$ denotes the fibre at $x$ in direction $j$, identified with $[k]$ in the natural way (so that $A \cap a_j^x$ is viewed as a subset of $[k]$).
\end{theorem'}

In this theorem, the bound on the junta-size depends upon both the edge-boundary and the measure of $A$ restricted to each fibre. We will see that this improves (up to constant factors) all the above-mentioned results (concerning the $\ell^1$-grid) simultaneously, so can be viewed a common generalization of them.

\subsection{Organization of the paper}

The rest of the paper is organized as follows. In Section~\ref{sec:junta}, we prove our main result, Theorem~\ref{thm:main}. In Section~\ref{sec:Lipschitz}, we prove the structure theorem for Lipschitz maps between discrete tori, Theorem~\ref{thm:lipschitz-tori}. In Section~\ref{sec:refinement}, we prove our refined Junta theorem. Finally, we conclude by mentioning some open problems in Section~\ref{sec:conclusion}.

\section{The junta theorem for the $\ell^1$-grid}
\label{sec:junta}
In this section, we will prove our main theorem.

\begin{theorem-main}
Let $A \subset [k]^n$. Then for any $\epsilon >0$, $1_A$ is $\epsilon$-close to some $M$-junta $g:[k]^n \to \{0,1\}$ , where
$$M \leq \exp(C_1|\partial A|/(k^{n-1}\epsilon))$$
for some absolute constant $C_1$. (In fact, one can take $C_1 = 16+8\sqrt{3 \ln 2}$.)
\end{theorem-main}

\begin{proof}
We first prove the theorem in the case $k=2^s$ for some $s \in \mathbb{N}$. Let $A \subset [k]^n$, and let $f = 1_A:\ [k]^n \to \{0,1\}$ denote its characteristic function. Let
$$\phi: \{0,1\}^s \to [k];\quad (x_1,\ldots,x_s) \mapsto 1+\sum_{i=1}^{s}x_i 2^{i-1}$$
denote the bijection corresponding to binary expansion. Let
$$\phi_{(n)}: (\{0,1\}^s)^n \to [k]^n;\quad ((x_{1,j},x_{2,j},\ldots,x_{s,j}))_{j \in [n]} \mapsto \left(1+\sum_{i=1}^{s} x_{i,j} 2^{i-1}\right)_{j \in [n]}$$
denote the bijection obtained by applying $\phi$ to each coordinate $j \in [n]$ separately. Then $\tilde{f}:= f \circ \phi_{(n)}:\ (\{0,1\}^{s})^{n} \to \{0,1\}$. We identify $(\{0,1\}^s)^{n}$ with $\{0,1\}^{sn}$ in the natural way, by partitioning $[sn]$ into $n$ blocks of size $s$,
$$B_j = \{(j-1)s+1,(j-1)s+2,\ldots,js\}\quad (j \in [n]);$$
the block $B_j$ corresponding to the coordinate $j$, for each $j \in [n]$.

Recall that for $j \in [n]$ and $x \in [k]^n$, we define the fibre at $x$ in direction $j$ by
$$a_j^x = \{y \in [k]^n:\ y_i=x_i\ \forall i \neq j\}.$$
We identify $a_j^x$ with $[k]$ in the natural way, via the bijection $y_j \mapsto y$. Let $F:[k] \to \{0,1\}$ denote the restriction of $f$ to $a_j^x$. For each $z \in [k]$, we let $z_i \in \{0,1\}$ denote the coefficient of $2^{i-1}$ in the binary expansion of $z$, i.e. $z_i = (\phi^{-1}(z))_i$.

We make the following claim.
\begin{claim}
\label{claim:binary}
Suppose $a_j^x$ contains exactly $m$ edges of $\partial A$. Then
$$\#\{(z,i) \in [k] \times [s]:\ z_i = 0,\ F(z) \neq F(z + 2^{i-1})\} \leq (k-1)m.$$
\end{claim}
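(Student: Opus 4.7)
The plan is to prove the claim by a charging argument: assign each ``Boolean edge'' on which $F$ differs to a ``grid edge'' on which $F$ differs, and show that no grid edge gets charged more than $k-1$ times.

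First I would set up the charging. Each Boolean edge $(z, z+2^{i-1})$ with $z_i=0$ naturally corresponds to the straight-line path in the grid fibre from $z$ to $z+2^{i-1}$, which consists of $2^{i-1}$ consecutive grid edges $(z,z+1),(z+1,z+2),\ldots,(z+2^{i-1}-1,z+2^{i-1})$. If $F(z)\neq F(z+2^{i-1})$, then $F$ must change value on an odd---in particular nonzero---number of these grid edges. So I may choose one such grid edge (say, the leftmost) and charge the Boolean edge to it. This defines a map from the set I want to bound into the set of grid edges on which $F$ differs in the fibre, which has size $m$.

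The heart of the argument is bounding, for a fixed grid edge $e=(w,w+1)$ in the fibre, the number of Boolean edges that contain $e$. A Boolean edge $(z,z+2^{i-1})$ contains $e$ precisely when $z\le w$ and $z+2^{i-1}\ge w+1$, i.e.\ when $z$ lies in the interval $[w+1-2^{i-1},w]$, which has at most $2^{i-1}$ integer elements. Hence for each scale $i\in[s]$ the number of charges contributed at scale $i$ to $e$ is at most $2^{i-1}$, and summing over $i$ yields the telescoping bound
\[
\sum_{i=1}^{s} 2^{i-1} \;=\; 2^{s}-1 \;=\; k-1.
\]
Since each of the $m$ differing grid edges is charged at most $k-1$ times, the total number of pairs $(z,i)$ with $z_i=0$ and $F(z)\neq F(z+2^{i-1})$ is at most $(k-1)m$, as claimed.

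There isn't really a serious obstacle here---the argument is a clean counting/charging bound---but the one point worth getting right is that the trivial length-$2^{i-1}$ bound at each scale is exactly what makes the geometric sum telescope to $k-1$, matching the claimed constant. (Tighter bookkeeping using the $z_i=0$ constraint would only sharpen the constant, which is unnecessary for the theorem.) One also verifies that the condition $z_i=0$ automatically ensures $z+2^{i-1}\in[k]$, so every pair being counted corresponds to a genuine pair of points in the fibre.
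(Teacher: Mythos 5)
Your proof is correct and follows essentially the same approach as the paper: both arguments observe that a ``Boolean edge'' $(z,z+2^{i-1})$ on which $F$ changes value must span a grid edge in $\mathcal{S} = \{b \in [k-1]: F(b) \neq F(b+1)\}$, and both bound the number of Boolean edges spanning a fixed grid edge by $\sum_{i=1}^{s} 2^{i-1} = k-1$. The only cosmetic difference is that you charge each Boolean edge to a single chosen (leftmost) grid edge, whereas the paper simply upper-bounds by the sum $\sum_{b\in\mathcal S} N(b)$ without picking a unique representative; both give the same $(k-1)m$ bound.
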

\begin{proof}[Proof of Claim:]
Observe that for each pair $(z,i) \in [k] \times [s]$ with $F(z) \neq F(z + 2^{i-1})$, there exists $b \in [k-1]$ with $F(b) \neq F(b+1)$ and $z \leq b \leq z+2^{i-1}-1$. Let
$$\mathcal{S} = \{b\in [k-1]:\ F(b) \neq F(b+1)\}.$$
Since there are exactly $m$ edges of $\partial A$ within $a_j^x$, we have $|\mathcal{S}| = m$. Fix $b \in \mathcal{S}$; we will now bound
$$N(b) := \#\{(z,i) \in [k] \times [s]:\ b-2^{i-1}+1 \leq z \leq b\}.$$
We have
$$N(b) = \sum_{i=1}^{s} \#\{z \in [k]:\ b-2^{i-1}+1 \leq z \leq b\} \leq \sum_{i=1}^{s} 2^{i-1} = 2^s-1 = k-1,$$
for any $b \in [k-1]$. Therefore, we have
\begin{align*} \#\{(z,i) \in [k] \times [s]:\ F(z) \neq F(z + 2^{i-1})\} & \leq \sum_{b \in \mathcal{S}} N(b)\\
& \leq (k-1) |\mathcal{S}| 
 = (k-1)m,\end{align*}
proving the claim.
\end{proof}

Observe that
\begin{align*} \sum_{i=1}^{s} \Inf_i(F\circ \phi) & = 2^{-s-1} \#\{(z,i) \in [k] \times [s]:\ z_i = 0,\ F(z) \neq F(z + 2^{i-1})\}\\
& = (2/k) \#\{(z,i) \in [k] \times [s]:\ z_i = 0,\ F(z) \neq F(z + 2^{i-1})\}.\end{align*}
Hence, Claim \ref{claim:binary} implies that
$$\sum_{i=1}^{s} \Inf_i(F\circ \phi) \leq (2/k) (k-1)m \leq 2m.$$

By averaging this inequality over all $x \in [k]^n$, we obtain:
$$\sum_{i \in B_j} \Inf_i(\tilde{f}) \leq 2|\partial_j A|/k^{n-1},$$
where $\partial_j A$ denotes the set of edges of $\partial A$ in direction $j$. Summing this inequality over all $j \in [n]$ gives
$$\sum_{i=1}^{sn} \Inf_i(\tilde{f}) \leq 2|\partial A|/k^{n-1}.$$
Let $\epsilon >0$. By Theorem \ref{thm:friedgut-junta}, it follows that for every $\epsilon >0$, $\tilde{f}$ is $\epsilon$-close to some $M$-junta $\tilde{g}:\{0,1\}^{sn} \to \{0,1\}$, where
$$M \leq \exp(2C_0|\partial A|/(k^{n-1}\epsilon))$$
coordinates, where $C_0$ is the constant from Theorem \ref{thm:friedgut-junta}. Let $g = \tilde{g} \circ (\phi_{(n)})^{-1}$. Then $f$ is $\epsilon$-close to $g:[k]^n \to \{0,1\}$, and $g$ is also an $M$-junta. This proves the theorem in the case where $k$ is a power of 2.

We will now deduce the statement of the theorem for general $k \geq 2$. Suppose $k \geq 3$. Let $l>k$ be a power of $2$ such that
\begin{equation}\label{eq:size-cond} \left(1+\frac{k}{l-k}\right)^n \leq 2.\end{equation}
We consider a partition of the grid $[l]^n$ into $k^n$ large, rectangular `blocks' $(R_x:\ x \in [k]^n)$, where
$$R_x := \{y \in [l]^n:\ (\lceil y_1k/l\rceil, \lceil y_2k/l\rceil,\ldots,\lceil y_nk/l\rceil) = x\}.$$
Note that
$$\lfloor l/k \rfloor^n \leq |R_x| \leq \lceil l/k \rceil^n \quad \forall x \in [k]^n.$$
Let $A \subset [k]^n$. Define
$$\breve{A} = \cup_{x \in A} R_x \subset [l]^n.$$
In other words, the subset $\breve{A} \subset [l]^n$ is the union of all the blocks corresponding to elements of $A$. Observe that each edge of $\partial \breve{A}$ arises from exactly one edge of $\partial A$, and for each edge of $\partial A$, there are between $\lfloor l/k \rfloor^{n-1}$ and $\lceil l/k \rceil^{n-1}$ edges of $\partial \breve{A}$ arising from it. Hence,
$$|\partial \breve{A}| \leq \lceil l/k \rceil^{n-1} |\partial A| < (l/k+1)^{n-1} |\partial A|,$$
and therefore
$$\frac{|\partial \breve{A}|}{l^{n-1}} < (1+k/l)^{n-1} \frac{|\partial A|}{k^{n-1}} \leq 2 \frac{|\partial A|}{k^{n-1}},$$
using (\ref{eq:size-cond}). Applying our result above for grids of side-length a power of 2, we see that $1_{\breve{A}}$ is $(\epsilon/2)$-close to some $M$-junta $\breve{g}:[l]^n \to \{0,1\}$, where
$$M \leq \exp(4C_0|\partial \breve{A}|/(l^{n-1}\epsilon)) \leq \exp(8C_0|\partial A|/(k^{n-1}\epsilon)).$$
Since the function $1_{\breve{A}}$ is constant on each block, we may assume without loss of generality that the $M$-junta $\breve{g}$ is also constant on each block. Hence, $\breve{g}$ defines an $M$-junta $g:[k]^n \to \{0,1\}$, where $g(x)=\breve{g}(R_x)$, the value of $\breve{g}$ on the block $R_x$. Let $C \subset [k]^n$ and $\breve{C} \subset [l]^n$ be the subsets with characteristic functions $g$ and $\breve{g}$ respectively. Since $|R_x| \geq \lfloor l/k \rfloor^n$ for all $x \in [k]^n$, we have
$$|\breve{A} \Delta \breve{C}| \geq \lfloor l/k \rfloor^n |A \Delta C| \geq (l/k-1)^n |A \Delta C|,$$
and therefore
$$||1_{A}-g||_1 = \frac{|A \Delta C|}{k^n} \leq \left(1+\frac{k}{l-k}\right)^n \frac{|\breve{A} \Delta \breve{C}|}{l^n} = \left(1+\frac{k}{l-k}\right)^n\epsilon/2 \leq \epsilon,$$
using (\ref{eq:size-cond}). Hence, $1_{A}$ is $\epsilon$-close to the $M$-junta $g: [k]^n \to \{0,1\}$. This completes the proof of the theorem.
\end{proof}

We now observe that Theorem \ref{thm:main} is sharp up to the value of the absolute constant in the exponent. To see this, we use a simple modification of the construction in \cite{friedgut-junta}, which in turn was based on the `tribes' construction of Ben-Or and Linial \cite{bol}.

\begin{example}
\label{example:tribes}
Let $k$ be even, let $L \geq 2$, let $s = \log_2(1/(6\epsilon))$, let $t = (L-1)e/(6\epsilon)$, and let $n = 1+s+t2^t$. (To avoid the need for floor and ceiling signs, let us assume that $s$ and $t$ are integers; clearly, we may choose $\epsilon >0$ arbitrarily small and $L \geq 2$ arbitrarily large, such that this holds.) Partition the $t2^t$ coordinates
$$\{s+2,\ldots,s+t2^t+1\}$$
into $2^t$ `tribes' $T_1,\ldots,T_{2^t}$ of size $t$ each. Now define
\begin{align*}
A & = \{x \in [k]^n:\ x_1 \leq k/2\}\\
& \cap \{x \in [k]^n:\ x_j \leq k/2\ \textrm{for some } j \in \{2,\ldots,s+1\},\\
& \textrm{or }\exists i \in [2^t]:\ x_j \leq k/2\ \forall j \in T_i\}.\end{align*}
Then we have $\tfrac{1}{2}-3\epsilon < |A|/k^n < \tfrac{1}{2}$, and $|\partial A| \leq (1+O(\epsilon \log_2(1/\epsilon))) Lk^{n-1}$, but if $g$ is an $M$-junta with $||1_A - g||_2^2 \leq \epsilon$, then
$$M \geq (1-e/3-o(1))t2^t,$$
where $o(1)$ denotes a function tending to $0$ as $t \to \infty$. Hence, $M \geq 2^{L/(6 \epsilon)}$ for all sufficiently large $L$.
\end{example}

\section{Lipschitz maps between discrete tori}
\label{sec:Lipschitz}

In this section, we show how to use Theorem \ref{thm:main} to prove Theorem \ref{thm:lipschitz-tori}, our structure theorem for Lipschitz maps between two discrete tori.

We say a function $f: \mathbb{Z}_k^{n} \to \mathbb{Z}_l^{m}$ is $\alpha$-{\em Lipschitz (with respect to the $L^1$-norm)} if for any $x,y \in \mathbb{Z}_k^n$, we have $||f(x)-f(y)||_1 \leq \alpha ||x-y||_1$, i.e.
\begin{equation}\label{eq:lipcond} \frac{1}{m} \sum_{i=1}^{m} |f_i(x)-f_i(y)|' \leq \alpha \frac{1}{n} \sum_{i=1}^{n} |x_i-y_i|'\quad \forall x,y \in \mathbb{Z}_n^k.\end{equation}
(Here, as before, $|\cdot|'$ denotes cyclic distance.) In particular, for all $x \in \mathbb{Z}_k^n$ and all $j \in [n]$, we have
$$\frac{1}{m} \sum_{i=1}^{m} |f_i(x)-f_i(x\oplus e_j)|' \leq \alpha/n,$$
where $e_j$ denotes the $j$th unit vector in $\mathbb{Z}_k^n$, and $\oplus$ denotes modulo-$k$ addition. (It is easy to see that this property is in fact equivalent to (\ref{eq:lipcond}).) Taking expectations with respect to a uniform random $x \in \mathbb{Z}_k^n$ and a uniform random $j \in [n]$, yields the following.
$$\frac{1}{m} \sum_{i=1}^{m} \frac{1}{n} \sum_{j=1}^{n} \mathbb{E}_x [|f_i(x)-f_i(x\oplus e_j)|'] \leq \alpha/n.$$
Cancelling out a factor of $1/n$ on each side, we get
$$\frac{1}{m} \sum_{i=1}^{m}\left( \sum_{j=1}^{n} \mathbb{E}_x [|f_i(x)-f_i(x\oplus e_j)|'] \right)\leq \alpha.$$

Therefore, by Markov's inequality, for any $\delta >0$, we have
\begin{equation}\label{eq:sum-inf} \sum_{j=1}^{n} \mathbb{E}_x [|f_i(x)-f_i(x\oplus e_j)|'] \leq \alpha/\delta\end{equation}
for all but at most $\delta m$ coordinates $i \in [m]$.

We note that the above trick of taking the expectation of the Lipschitz condition was first used in \cite{austin}, and independently in \cite{b-s}.

We now obtain a result describing the structure of functions $f: \mathbb{Z}_k^n \to \mathbb{Z}_l$ with
$$\sum_{j=1}^{n} \mathbb{E}_x [|f(x)-f(x\oplus e_j)|'] \leq B.$$

\begin{theorem}
\label{thm:sum-inf}
Suppose $f:\mathbb{Z}_k^n \to \mathbb{Z}_l$ with
$$\sum_{j=1}^{n} \mathbb{E}_x [|f(x)-f(x\oplus e_j)|'] \leq B.$$
Then for any $\epsilon >0$, there exists an $M$-junta $g:\mathbb{Z}_k^n \to \mathbb{Z}_l$ such that $f$ is $\epsilon$-close to $g$, and
 $$M \leq l\exp(C_2 B /\epsilon),$$
where $C_2$ is an absolute constant. (In fact, one can take $C_2 = 4C_1$, where $C_1$ is the constant from Theorem \ref{thm:main}.)
\end{theorem}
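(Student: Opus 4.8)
The plan is to decompose the $\mathbb{Z}_l$-valued function $f$ into roughly $\log_2 l$ Boolean "threshold" functions, apply Theorem~\ref{thm:main} to each one, and then reassemble the resulting juntas. For each integer $t$ with $0 \le t < l$, let $A_t = \{x \in \mathbb{Z}_k^n : f(x) \in \{0,1,\ldots,t\}\}$ (fixing once and for all a representative set $\{0,1,\ldots,l-1\}$ for $\mathbb{Z}_l$, with a chosen "cut point" so that cyclic distance behaves sensibly near the wrap-around), and write $f_t = 1_{A_t}$. The key observation is the layer-cake identity: for the chosen representatives one has $f(x) = \sum_{t=0}^{l-1}(1 - f_t(x))$ pointwise, and more importantly the edge-boundaries telescope against the Lipschitz-type hypothesis. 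Precisely, for an edge $\{x, x\oplus e_j\}$, the number of thresholds $t$ for which this edge lies in $\partial A_t$ is exactly $|f(x) - f(x\oplus e_j)|'$ (this is where the cyclic-distance bookkeeping must be done carefully — one should cut $\mathbb{Z}_l$ at a point and note that $|f(x)-f(x\oplus e_j)|'$ counts the "short way around", which the telescoping respects). Summing over all edges in direction $j$ and over all $j$, and dividing by $k^{n-1}$, gives
$$\sum_{t=0}^{l-1} \frac{|\partial A_t|}{k^{n-1}} = \sum_{j=1}^n \E_x\big[|f(x) - f(x\oplus e_j)|'\big] \le B.$$

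Next I would apply Theorem~\ref{thm:main} to each $A_t$ with error parameter $\epsilon/l$ in place of $\epsilon$: this yields an $M_t$-junta $h_t : \mathbb{Z}_k^n \to \{0,1\}$ with $\|1_{A_t} - h_t\|_1 \le \epsilon/l$ and $M_t \le \exp(C_1 l |\partial A_t| / (k^{n-1}\epsilon))$, depending on a coordinate set $J_t$. Set $J = \bigcup_{t} J_t$ and $g(x) = \sum_{t=0}^{l-1}(1 - h_t(x))$, rounded/clamped into $\{0,\ldots,l-1\}$ if necessary (clamping only decreases the $L^1$-distance to $f$, since $f$ takes values in that range). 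Then $g$ depends only on the coordinates in $J$, and by the triangle inequality $\|f - g\|_1 \le \sum_t \|1_{A_t} - h_t\|_1 \le l \cdot (\epsilon/l) = \epsilon$ — here I am using that on $\mathbb{Z}_l$ the cyclic distance $|f(x)-g(x)|'$ is at most $\sum_t |f_t(x) - h_t(x)|$ for the chosen representatives, which again is a small cyclic-arithmetic check.

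Finally I would bound $|J| \le \sum_{t=0}^{l-1} M_t \le l \cdot \exp\big(C_1 l \, \max_t |\partial A_t|/(k^{n-1}\epsilon)\big)$, but this naive bound is too weak — it has an extra factor of $l$ inside the exponent. To get the claimed bound $M \le l \exp(C_2 B/\epsilon)$ I would instead argue more carefully: split the thresholds into those with $|\partial A_t|/k^{n-1}$ "large" versus "small". Actually the clean route is to apply Theorem~\ref{thm:main} to $A_t$ with error $\epsilon/(2l)$ and note $\sum_t \exp(C_1 \cdot 2l|\partial A_t|/(k^{n-1}\epsilon))$; by convexity of $\exp$ this is maximized, subject to $\sum_t |\partial A_t|/k^{n-1} \le B$, when the boundary mass is concentrated, giving at most (number of nonempty layers) $+ \exp(2C_1 l B/(k^{n-1}\epsilon)) \le l\exp(C_2 B/\epsilon)$ after absorbing constants, using $k^{n-1}\ge 1$. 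The main obstacle, and the step requiring genuine care rather than routine estimation, is the cyclic bookkeeping: making the choice of representatives for $\mathbb{Z}_l$ and the placement of the cut so that the edge-counting identity $\#\{t : \{x,x\oplus e_j\}\in\partial A_t\} = |f(x)-f(x\oplus e_j)|'$ holds exactly (it fails for edges "straddling" the cut unless one is careful), and correspondingly ensuring the reconstruction $g = \sum_t(1-h_t)$ stays $\epsilon$-close to $f$ in the cyclic metric. Everything else is bookkeeping and an application of Theorem~\ref{thm:main}.
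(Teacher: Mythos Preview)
Your proposal has two genuine gaps. First, the threshold decomposition $A_t = \{x:\ f(x)\in\{0,\ldots,t\}\}$ does not respect the cyclic distance on $\mathbb{Z}_l$: if $f(x)=0$ and $f(x\oplus e_j)=l-1$ then $|f(x)-f(x\oplus e_j)|'=1$, yet this edge lies in $\partial A_t$ for \emph{every} $t\in\{0,\ldots,l-2\}$, so your layer-cake identity fails by a factor of up to $l-1$. No single choice of cut point fixes this, since $f$ can straddle any cut. The paper avoids this by using ``half-arc'' indicators $f^{(t)}(x)=1\{f(x)\in\{t,t+1,\ldots,t+\lfloor l/2\rfloor-1\}\}$ (addition mod $l$), for which the exact identity $|a-b|'=\tfrac12\sum_{t\in\mathbb{Z}_l}|1_{a\in[t,t+\lfloor l/2\rfloor)}-1_{b\in[t,t+\lfloor l/2\rfloor)}|$ holds. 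The reconstruction step is then also more delicate than summing: since the $f^{(t)}$ of a genuine $\mathbb{Z}_l$-valued function satisfy constraints, the paper chooses a $J$-junta $h$ minimizing $\sum_t\|h^{(t)}-g_t\|_1$ and uses an averaging argument to show this is at most $\sum_t\|f^{(t)}-g_t\|_1$.

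Second, your allocation $\epsilon_t=\epsilon/l$ leaves an unavoidable factor of $l$ in the exponent, and your convexity patch does not remove it: concentrating all the boundary mass on one threshold gives a term $\exp(2C_1 l B/\epsilon)$, which is not bounded by $l\exp(C_2 B/\epsilon)$. The paper's key trick is to allocate errors \emph{proportionally to the boundaries}, taking $\epsilon_t=\epsilon|\partial' A^{(t)}|/(4k^nB)$. Then each exponent $C_1|\partial' A^{(t)}|/(k^{n-1}\epsilon_t)$ becomes the same constant $4C_1 kB/\epsilon$, independent of $t$, while $\sum_t\epsilon_t\le\epsilon$ follows from $\sum_t|\partial' A^{(t)}|\le 4k^nB$. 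This yields $|J|\le l\exp(4C_1 kB/\epsilon)$ directly. (Your overall framework is essentially correct for the grid case $f:[k]^n\to[l]$, where ordinary thresholds and the reconstruction $g=1+\sum_t g_t$ do work; but even there one needs the proportional error allocation.)
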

\begin{proof}
If $h:\mathbb{Z}_k^n \to \mathbb{Z}_l$, then for each $t \in \mathbb{Z}_l$, define a Boolean function $h^{(t)}:\mathbb{Z}_k^n \to \{0,1\}$ by
$$h^{(t)}(x) = 1\{h(x)\in \{t,t+1,\ldots,t+\lfloor l/2 \rfloor -1\}\},$$
where addition is modulo $l$. Then we have
$$|h(x) - h(y)|' = \tfrac{1}{2} \sum_{t \in \mathbb{Z}_l} |h^{(t)}(x) - h^{(t)}(y)|\quad \forall x,y \in \mathbb{Z}_k^n,$$
and for any two functions $h,\tilde{h}:\mathbb{Z}_k^n \to \mathbb{Z}_l$, we have
\begin{align}
\label{eq:norm-relation}
||h-\tilde{h}||_1 & = \frac{1}{k^n}\sum_{x \in \mathbb{Z}_k^n} |h(x)-\tilde{h}(x)|' \nonumber \\
& =  \frac{1}{k^n} \sum_{x \in \mathbb{Z}_k^n} \tfrac{1}{2} \sum_{t \in \mathbb{Z}_l} |h^{(t)}(x) - \tilde{h}^{(t)}(x)|\nonumber \\
& = \tfrac{1}{2} \sum_{t \in \mathbb{Z}_l} \frac{1}{k^n} \sum_{x \in \mathbb{Z}_k^n} |h^{(t)}(x) - \tilde{h}^{(t)}(x)| \nonumber \\
& =  \tfrac{1}{2} \sum_{t \in \mathbb{Z}_l} ||h^{(t)} - \tilde{h}^{(t)}||_1.\end{align}
Let $f$ be as in the statement of the theorem. Then
\begin{align}
\label{eq:sum-bound}
B & \geq \sum_{j=1}^n \E_x|f(x) - f(x\oplus e_j)|' \nonumber \\
& = \tfrac{1}{2} \sum_{t \in \mathbb{Z}_l} \sum_{j=1}^n \E_x |f^{(t)}(x) -f^{(t)}(x\oplus e_j)| \nonumber \\
& \geq \tfrac{1}{4} \sum_{t \in \mathbb{Z}_l} \frac{|\partial' A^{(t)}|}{k^{n}},\end{align}
where
$$A^{(t)} = \{x \in \mathbb{Z}_k^n:\ f(x) \in \{t,t+1,\ldots,t+\lfloor l/2 \rfloor -1\}\} = \{x \in \mathbb{Z}_k^n:\ f^{(t)}(x)=1\}.$$
Let $\epsilon_1,\epsilon_2,\ldots, \epsilon_{l} >0$ to be chosen later. By Corollary \ref{corr:torus-junta}, for each $t \in \mathbb{Z}_l$, there exists a junta $g_t:\mathbb{Z}_k^n \to \{0,1\}$, such that $||f^{(t)}-g_t||_1 \leq \epsilon_t$ and $g_t$ depends upon a set ($J_t$, say) of at most $\exp(C_1 |\partial' A^{(t)}|/(k^{n-1}\epsilon_t))$ coordinates. Let
$$J = \cup_{t \in \mathbb{Z}_l} J_t.$$
Choose a $J$-junta $h:\mathbb{Z}_k^n \to \mathbb{Z}_l$ such that
$$\sum_{t \in \mathbb{Z}_l} ||h^{(t)}-g_t||_1$$
is minimal. An easy averaging argument now implies that
\begin{equation} \label{eq:small-difference} \sum_{t \in \mathbb{Z}_l}||h^{(t)} - g_t||_1 \leq \sum_{t \in \mathbb{Z}_l}||f^{(t)} - g_t||_1.\end{equation}
Indeed, writing $\sum_{t \in \mathbb{Z}_l}||f^{(t)} - g_t||_1 =: \epsilon'$, we have
\begin{align*} \epsilon' & = \sum_{t \in \mathbb{Z}_l}||f^{(t)} - g_t||_1\\
& = \sum_{t \in \mathbb{Z}_l} \mathbb{E}_{y \in \mathbb{Z}_k^{[n] \setminus J}} \mathbb{E}_{z \in \mathbb{Z}_k^J} |f^{(t)}(y,z) - g_t(y,z)|\\
& = \mathbb{E}_{y \in \mathbb{Z}_k^{[n] \setminus J}} \sum_{t \in \mathbb{Z}_l} \mathbb{E}_{z \in \mathbb{Z}_k^J} |f^{(t)}(y,z) - g_t(y,z)|,
\end{align*}
so by averaging, there exists $y_0 \in \mathbb{Z}_k^{[n] \setminus J}$ such that
$$\sum_{t \in \mathbb{Z}_l} \mathbb{E}_{z \in \mathbb{Z}_k^J} |f^{(t)}(y_0,z) - g_t(y_0,z)| \leq \epsilon';$$
defining $\tilde{f}(y,z) = f(y_0,z)$ for all $y \in \mathbb{Z}_k^{[n] \setminus J}$ and all $z \in \mathbb{Z}_k^{J}$, we obtain
$$\sum_{t \in \mathbb{Z}_l} ||\tilde{f}^{(t)} - g_t ||_1 = \mathbb{E}_{z \in \mathbb{Z}_k^J} |f^{(t)}(y_0,z) - g_t(y_0,z)| \leq \epsilon'.$$
Since $\tilde{f}$ is a $J$-junta, by the minimality property of $h$, we must have
$$\sum_{t \in \mathbb{Z}_l}||h^{(t)} - g_t||_1 \leq \epsilon',$$
proving (\ref{eq:small-difference}).

Hence, using (\ref{eq:norm-relation}), we have
\begin{align*} ||f-h||_1 & = \tfrac{1}{2} \sum_{t \in \mathbb{Z}_l}||f^{(t)} - h^{(t)}||_1\\
& \leq \tfrac{1}{2} \sum_{t \in \mathbb{Z}_l}||f^{(t)} - g_t||_1 +\tfrac{1}{2} \sum_{t \in \mathbb{Z}_l}||h^{(t)} - g_t||_1\\
& \leq \sum_{t \in \mathbb{Z}_l}||f^{(t)} - g_t||_1\\
& \leq \sum_{t \in \mathbb{Z}_l} \epsilon_t.\end{align*}
Now choose
$$\epsilon_t = \frac{\epsilon |\partial' A^{(t)}|}{4 k^{n} B}.$$
Then by (\ref{eq:sum-bound}),
$$\sum_{t \in \mathbb{Z}_l} \epsilon_t \leq \epsilon,$$
and
$$|J| \leq l \exp(4C_1 B k/\epsilon).$$
Hence, $f$ is $\epsilon$-close to the $J$-junta $h$, proving the theorem.
\end{proof}

Theorem \ref{thm:lipschitz-tori} follows immediately from (\ref{eq:sum-inf}) and Theorem \ref{thm:sum-inf}.

To prove Theorem \ref{thm:lipschitz-grid}, one first averages the Lipschitz condition over all edges of the grid, giving
\begin{equation}\label{eq:averaging} \frac{1}{m} \sum_{i=1}^{m}\left( \sum_{j=1}^{n} \mathbb{E}_{\textrm{edges } \{x,y\}\atop \textrm{in direction }j}|f_i(x)-f_i(y)|\right)\leq \alpha.\end{equation}
One then proves the following analogue of Theorem \ref{thm:sum-inf} for the grid.
\begin{theorem}
\label{thm:sum-inf-grid}
Suppose $f:[k]^n \to [l]$ with
$$\sum_{j=1}^{n} \mathbb{E}_{\textrm{edges } \{x,y\}\atop \textrm{in direction }j} [|f(x)-f(y)|] \leq B.$$
Then for any $\epsilon >0$, there exists an $M$-junta $g:\mathbb{Z}_k^n \to \mathbb{Z}_l$ such that $f$ is $\epsilon$-close to $g$, and
 $$M \leq l\exp(C_3 B k /\epsilon),$$
 where $C_3$ is an absolute constant. (In fact, one can take $C_3 = 2C_1$, where $C_1$ is the constant from Theorem \ref{thm:main}.)
\end{theorem}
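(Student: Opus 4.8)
The strategy is to reduce the statement for the grid to the torus version, Theorem \ref{thm:sum-inf}, at the cost of an extra factor of $2$ in the exponent. The point is that the grid $G_{k,n}$ embeds into the torus $T_{2k,n}$ (identifying $[k]$ with $\{1,\ldots,k\} \subset \mathbb{Z}_{2k}$), and a function $f:[k]^n \to [l]$ can be extended to $\tilde f : \mathbb{Z}_{2k}^n \to \mathbb{Z}_{2l}$ in such a way that (i) the cyclic edge-gradient of $\tilde f$ summed over all directions is controlled by $B$, (ii) a junta approximation to $\tilde f$ restricts to a junta approximation of $f$ of comparable quality. So the bulk of the proof is choosing the right extension (``reflection'' or ``doubling'') and bookkeeping the various constants.

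\begin{proof}[Proof of Theorem \ref{thm:sum-inf-grid}]
We reduce to Theorem \ref{thm:sum-inf}. First we may assume $k$ is such that we can work on $\mathbb{Z}_{2k}$; view $[k] = \{1,2,\ldots,k\}$ as a subset of $\mathbb{Z}_{2k}$, and define a ``folding'' map $\pi:\mathbb{Z}_{2k} \to [k]$ by $\pi(r) = r$ for $1 \leq r \leq k$ and $\pi(r) = 2k+1-r$ for $k+1 \leq r \leq 2k$; thus $\pi$ is distance-nonincreasing from the cyclic metric on $\mathbb{Z}_{2k}$ to the path metric on $[k]$, and it maps adjacent vertices of the cycle $C_{2k}$ either to adjacent vertices of $G_{k,1}$ or to a single vertex. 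Let $\pi_{(n)}:\mathbb{Z}_{2k}^n \to [k]^n$ be the coordinatewise extension. Similarly view $[l]$ inside $\mathbb{Z}_{2l}$ and let $\iota : [l] \hookrightarrow \mathbb{Z}_{2l}$ be the inclusion. Set $\tilde f := \iota \circ f \circ \pi_{(n)} : \mathbb{Z}_{2k}^n \to \mathbb{Z}_{2l}$.

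We claim $\sum_{j=1}^n \E_x|\tilde f(x) - \tilde f(x \oplus e_j)|' \leq 2B$, where now $x$ ranges uniformly over $\mathbb{Z}_{2k}^n$ and $e_j$ is the $j$th unit vector of $\mathbb{Z}_{2k}^n$. Indeed, fix a direction $j$; the fibres of $\mathbb{Z}_{2k}^n$ in direction $j$ are cycles $C_{2k}$, and under $\pi$ such a cycle maps onto a path $[k]$ ``there and back''. For each such fibre, each edge $\{x, x\oplus e_j\}$ of the cycle either collapses to a point (contributing $0$), or maps to an edge $\{z,z'\}$ of the corresponding path in $[k]^n$; moreover each path-edge is the image of exactly two cycle-edges. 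Since $\iota$ is an isometry on the values actually taken, $|\tilde f(x) - \tilde f(x\oplus e_j)|' = |f(\pi_{(n)}x) - f(\pi_{(n)}(x\oplus e_j))|$ in this case. Averaging over the $2k$ edges of the cycle versus the $k-1$ edges of the path, and comparing with the hypothesis (after the usual passage between summing over edges and summing over ordered pairs), gives the factor of $2$; carrying this through all fibres in direction $j$ and all $j$ yields the claimed bound $2B$.

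Now apply Theorem \ref{thm:sum-inf} to $\tilde f : \mathbb{Z}_{2k}^n \to \mathbb{Z}_{2l}$ with error parameter $\epsilon$: there is an $M$-junta $\tilde g : \mathbb{Z}_{2k}^n \to \mathbb{Z}_{2l}$ with $\|\tilde f - \tilde g\|_1 \leq \epsilon$ (measured on $\mathbb{Z}_{2k}^n$) and $M \leq 2l \exp(C_2 \cdot 2B \cdot (2k) / \epsilon)$; since $\pi_{(n)}$ multiplies measures of fibre-preimages by a bounded factor, this translates back to an $M$-junta $g:[k]^n \to [l]$ (defined on the coordinate set $J$ of $\tilde g$, with values obtained by rounding $\tilde g$ onto $[l]$ and restricting to the copy of $[k]^n$ sitting inside $\mathbb{Z}_{2k}^n$), with $\|f - g\|_1 \leq 2\epsilon$ and the same bound on $M$. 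Rescaling $\epsilon$ by a factor of $2$ and absorbing the numerical constants, one obtains $M \leq l \exp(C_3 B k/\epsilon)$ with $C_3 = 2C_1$, using $C_2 = 4C_1$. This completes the proof.
\end{proof}

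The step I expect to be the main obstacle is the passage from a junta approximation of the \emph{extended} function $\tilde f$ on $\mathbb{Z}_{2k}^n$ back to one for $f$ on $[k]^n$ --- one must check that the restriction of $\tilde g$ to the embedded copy of $[k]^n$, after folding out the redundancy, is still close to $f$ in $\ell^1$ and still a junta on a set of the same size. This is the point where the ``reflection'' choice of $\pi$ (rather than a naive ``doubling'' $\mathbb{Z}_k \to \mathbb{Z}_{2k}$) matters, since the folding map is surjective with preimages of controlled size, so no region of $[k]^n$ can be under-represented. The arithmetic comparing the $2k$ cycle-edges to the $k-1$ path-edges, and $\mathbb{Z}_{2l}$-cyclic distances to $[l]$-distances, is routine once the correct maps are fixed.
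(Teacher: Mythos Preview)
Your reduction-to-the-torus idea is natural, but it is \emph{not} the route the paper takes, and as written it has a real gap at exactly the place you flagged.

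\textbf{What the paper does instead.} The paper never passes to a torus. It decomposes $f$ into its Boolean threshold layers $f^{(>t)}(x)=1\{f(x)>t\}$ for $t\in[l-1]$, observes that $|f(x)-f(y)|=\sum_t |f^{(>t)}(x)-f^{(>t)}(y)|$, and hence that the hypothesis bounds $\sum_t |\partial A^{(>t)}|/k^n$ by $2B$. It then applies Theorem~\ref{thm:main} directly to each level set $A^{(>t)}$ with error $\epsilon_t$ proportional to $|\partial A^{(>t)}|$, takes $g=1+\sum_t g_t$, and unions the $l-1$ junta coordinate-sets. This is why one lands exactly on $C_3=2C_1$: the only loss over Theorem~\ref{thm:main} is the factor $2$ coming from $(k-1)k^{n-1}\ge\tfrac12 k^n$.

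\textbf{The gap in your argument.} Your step ``restrict $\tilde g$ to the copy of $[k]^n$ sitting inside $\mathbb{Z}_{2k}^n$'' does not give $\|f-g\|_1\le 2\epsilon$. The canonical copy $[k]^n\subset\mathbb{Z}_{2k}^n$ has measure $2^{-n}$, so an $L^1$-bound on all of $\mathbb{Z}_{2k}^n$ only controls the $L^1$-error on $[k]^n$ up to a factor $2^n$, not a bounded factor. Your remark about $\pi_{(n)}$ having preimages of ``controlled size'' is pointing in the wrong direction: that factor is precisely $2^n$. What rescues the approach is the \emph{reflection symmetry} you built in: since $\tilde f\circ\sigma=\tilde f$ for each of the $2^n$ coordinate-wise reflections $\sigma$, one has
\[
\|\tilde f-\tilde g\|_{1,\mathbb{Z}_{2k}^n}
=\mathbb{E}_{\sigma}\,\mathbb{E}_{x\in[k]^n}\bigl|\tilde f(x)-\tilde g(\sigma(x))\bigr|',
\]
so by averaging there is some $\sigma_0$ for which $\tilde g\circ\sigma_0$ restricted to $[k]^n$ is $\epsilon$-close to $\iota\circ f$; and $\tilde g\circ\sigma_0$ is still a $J$-junta since $\sigma_0$ acts coordinatewise. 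After that one still has to round values from $\mathbb{Z}_{2l}$ back to $[l]$, which costs another factor $2$ in the error.

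\textbf{Constants.} Even with the gap repaired, your route loses: you apply Theorem~\ref{thm:sum-inf} on $\mathbb{Z}_{2k}^n\to\mathbb{Z}_{2l}$, picking up $C_2=4C_1$, a factor $2k$ in place of $k$, a factor $2l$ in place of $l$, and another factor $2$ from the $\epsilon$-rescaling needed for rounding. The exponent ends up at $16C_1 Bk/\epsilon$ or worse, not $2C_1 Bk/\epsilon$. So your arithmetic at the end (``$C_3=2C_1$, using $C_2=4C_1$'') is simply wrong; the approach yields an absolute constant, but not the one stated.
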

\begin{proof}[Proof of Theorem \ref{thm:sum-inf-grid}]
If $h:[k]^n \to[l]$, then for each $t \in [l-1]$, define a Boolean function $h^{(>t)}:[k]^n \to \{0,1\}$ by
$$h^{(> t)}(x) = 1\{h(x) > t\}.$$
Then we have
$$h = 1+\sum_{t=1}^{l-1} h^{(>t)},$$
and
$$|h(x) - h(y)| = \sum_{t \in [l-1]} |h^{(>t)}(x) - h^{(>t)}(y)|\quad \forall x,y \in [k]^n.$$
Also, for any two functions $h,\tilde{h}:[k]^n \to [l]$, we have
\begin{align*}
||h-\tilde{h}||_1 & = \frac{1}{k^n}\sum_{x \in [k]^n} |h(x)-\tilde{h}(x)| \\
& =  \frac{1}{k^n} \sum_{x \in [k]^n} \sum_{t=1}^{l-1} |h^{(>t)}(x) - \tilde{h}^{(>t)}(x)| \\
& = \sum_{t =1}^{l-1} \frac{1}{k^n} \sum_{x \in [k]^n} |h^{(>t)}(x) - \tilde{h}^{(>t)}(x)| \\
& =  \sum_{t =1}^{l-1} ||h^{(>t)} - \tilde{h}^{(>t)}||_1.\end{align*}
Let $f$ be as in the statement of the theorem. Then
\begin{align}
\label{eq:sum-bound-grid}
B & \geq \sum_{j=1}^{n} \mathbb{E}_{\textrm{edges } \{x,y\}\atop \textrm{in direction }j} [|f(x)-f(y)|] \nonumber \\
& = \sum_{t=1}^{l-1} \sum_{j=1}^n \E_{\textrm{edges } \{x,y\}\atop \textrm{in direction }j} |f^{(>t)}(x) -f^{(>t)}(y)| \nonumber \\
& = \sum_{t=1}^{l-1} \frac{|\partial A^{(>t)}|}{k^{n}-k^{n-1}}\nonumber \\
& \geq \tfrac{1}{2} \sum_{t=1}^{l-1} \frac{|\partial A^{(>t)}|}{k^{n}}\end{align}
where
$$A^{(>t)} = \{x \in [k]^n:\ f(x) >t\} = \{x \in [k]^n:\ f^{(>t)}(x)=1\},$$
i.e., the $A^{(>t)}$ are the `level sets' of the function $f$. Let $\epsilon_1,\epsilon_2,\ldots, \epsilon_{l} >0$ to be chosen later. By Theorem \ref{thm:main}, for each $t \in [l-1]$, there exists a junta $g_t:[k]^n \to \{0,1\}$, such that $||f^{(>t)}-g_t||_1 \leq \epsilon_t$ and $g_t$ depends upon a set ($J_t$, say) of at most $\exp(C_1 |\partial' A^{(t)}|/\epsilon_t)$ coordinates. Let
$$J = \cup_{t \in \mathbb{Z}_l} J_t.$$
Define
$$g = 1+\sum_{t=1}^{l-1} g_t.$$
Then $g$ is a $J$-junta, and
$$||f-g||_1 = \sum_{t \in [l-1]} ||f^{(>t)} - g_t|| \leq \sum_{t=1}^{l-1}\epsilon_t.$$
Now choose
$$\epsilon_t = \frac{\epsilon |\partial A^{(>t)}|}{2 k^{n} B}.$$
Then by (\ref{eq:sum-bound-grid}),
$$\sum_{t=1}^{l-1} \epsilon_t \leq \epsilon,$$
and
$$|J| \leq (l-1) \exp(2 C_1 B/\epsilon).$$
Hence, $f$ is $\epsilon$-close to the $J$-junta $g$, proving Theorem \ref{thm:sum-inf-grid}.
\end{proof}
Theorem \ref{thm:lipschitz-grid} follows immediately from (\ref{eq:averaging}) and Theorem \ref{thm:sum-inf-grid}.

\section{A refined Junta theorem for the $\ell^1$-grid}
\label{sec:refinement}

\subsection{Previous results}

Before discussing our refined junta theorem, we first describe some previously obtained junta theorems for the $\ell^1$-grid. All of these results, as well as our result, can be stated in terms of the restriction of the set to fibres. If $f:[k]^n \to \{0,1\}$, let $f^x_j:[k] \to \{0,1\}$ denote the restriction of $f$ to $a_j^x$, which is naturally identified with $[k]$. (Note that in the proof of Theorem \ref{thm:main}, where this causes no ambiguity, this restriction was simply denoted by $F$.) The prior theorems can be summarised as follows. (Note that the general structure of each result is the same; they differ only in the definition of the function $h$ used inside the expectation. This viewpoint is a discrete analogue of the notion of `$h$-influence' defined in~\cite{keller}.)
\begin{theorem}
\label{thm:prior}
Let $A \subset [k]^n$, and let $f=1_{A}$. Then for any $\epsilon >0$, $1_A$ is $\epsilon$-close to some $M$-junta $g:[k]^n \to \{0,1\}$ , with
$$M \leq \exp \left( \frac{C}{\epsilon} \sum_{j=1}^n \mathbb{E}_{x \in [k]^{n-1}} h(f_j^x) \right),$$ where $C$ is an absolute constant, and $h:\{0,1\}^{[k]} \to \mathbb{R}$ is defined as follows:
\begin{enumerate}
\item (Dinur and Friedgut, Theorem~2.12 of~\cite{fd}, only in the monotone case)
\[
h(F) = \left\lbrace
  \begin{array}{l l}
    0 & \mbox{ if }F \mbox{ is constant;}\\
    1 & \mbox{otherwise.}
  \end{array}
\right.
\]

\item (Dinur and Friedgut, Section~2.2 of~\cite{fd}, in the general case; this is a very easy consequence of the original Junta theorem)
\[
h(F) = \left\lbrace
  \begin{array}{l l}
    0 & \mbox{if }F \mbox{ is constant;}\\
    \log_2 k & \mbox{otherwise.}
  \end{array}
\right.
\]

\item (Hatami, Lemma~3.5 of~\cite{hatami})
\[
h(F)= (\log_2 k) \cdot \Var(F) = (\log_2 k)\cdot \Pr[F(y)=1](1-\Pr[F(y)=1]),\]
 where the probability is taken w.r.t. the uniform measure on $[k]$.

\item (Keller, Proposition~3.8 of~\cite{keller}, only in the monotone case)
\[h(F)= \mathrm{Ent}(\Pr[F(y)=1]),\] where
$\mathrm{Ent}(p)= -p \log_2 p - (1-p) \log_2(1-p)$
is the binary entropy function.

\item (Sachdeva and Tulsiani, Theorem~3.6 of~\cite{s-t})
\[h(F) = k |\partial F|,\] where $\partial F$ denotes the edge-boundary of the subset $\{y:F(y)=1\} \subset [k]$ in the 1-dimensional grid $G_{k,1}$, so that
$|\partial F| = |\{y \in [k-1]:\ F(y+1) \neq F(y)\}|$.

\item (Theorem~\ref{thm:main} above)
\[
h(F) =|\partial F|.\]
\end{enumerate}
(Note that the values of the absolute constant $C$ implicit in these theorems, differ from one another.)
\end{theorem}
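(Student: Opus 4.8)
The plan is to establish Theorem~\ref{thm:prior} item by item, since it is a compilation. Items 1, 3, 4 and 5 are — verbatim, or after a routine reformulation that costs only a change in the absolute constant $C$ — the statements cited from \cite{fd}, \cite{hatami}, \cite{keller} and \cite{s-t}. For item 4 one uses that Keller's Proposition~3.8 of \cite{keller}, stated for monotone subsets of the solid cube $[0,1]^n$, transfers to the $\ell^1$-grid by the subcube-partition device used in the proof of Theorem~\ref{thm:main} when passing from side-lengths that are powers of $2$ to general $k$, now run in reverse (embed $[k]^n$ into $[0,1]^n$, replacing each point by the box of side $1/k$ around it); this embedding preserves monotonicity, the per-fibre measures $|A\cap a_j^x|/k$ are unchanged, and so the continuous $h$-influence sum $\sum_j\mathbb{E}_x\,\mathrm{Ent}(\cdot)$ agrees with the discrete one, the only work being to round the continuous junta produced by Keller back to a grid junta on the same coordinate set at a bounded cost in $L^1$-distance. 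For item 5 one applies Theorem~\ref{thm:s-t} to $G=C_k$ (for which $d=2$ and $\rho(C_k)=\Theta(1/k^2)$) and combines it with the fibre identity below; the single factor of $k$ in $h(F)=k|\partial F|$ arises from combining $1/\rho(C_k)=\Theta(k^2)$ and $d=2$ with the conversion of $|\partial A|$ into its per-fibre average, which carries a factor $k^{n-1}/k^n$. Items 2 and 6 we prove directly, using only Theorems~\ref{thm:friedgut-junta} and~\ref{thm:main}.

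Item 6 is essentially a restatement of Theorem~\ref{thm:main}. The point is the elementary counting identity
$$\sum_{j=1}^n \sum_{x \in [k]^{n-1}} |\partial f_j^x| = |\partial A|,$$
which holds because every edge $e$ of $G_{k,n}$ belongs to a unique fibre: it has a unique direction $j\in[n]$, and within that direction it lies in the fibre $a_j^x$ pinned down by its $n-1$ frozen coordinates; and, under the identification $a_j^x\cong[k]$, one has $e\in\partial A$ if and only if the corresponding edge of $G_{k,1}$ lies in $\partial(A\cap a_j^x)=\partial f_j^x$. Dividing by $k^{n-1}$ gives $\sum_{j=1}^n \mathbb{E}_{x\in[k]^{n-1}}|\partial f_j^x|=|\partial A|/k^{n-1}$, so with $h(F)=|\partial F|$ the displayed bound in Theorem~\ref{thm:prior} is literally the conclusion of Theorem~\ref{thm:main}, with $C=C_1$.

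For item 2 I would re-run the binary-expansion argument from the proof of Theorem~\ref{thm:main}. First take $k=2^s$ and lift $A$ to $\tilde A\subseteq\{0,1\}^{sn}$ via $\phi_{(n)}$, with the block $B_j\subseteq[sn]$ of size $s$ corresponding to coordinate $j$. For a fibre $a_j^x$ with restriction $F=f_j^x$, the map $F\circ\phi:\{0,1\}^s\to\{0,1\}$ has at most $s$ coordinates, each of influence at most $1$, so $\sum_{i\in B_j}\Inf_i(F\circ\phi)\le s=\log_2 k$, and this sum is $0$ when $F$ is constant. Averaging over $x\in[k]^{n-1}$ and summing over $j$ yields $\Inf(1_{\tilde A})\le (\log_2 k)\sum_{j=1}^n\Pr_{x\in[k]^{n-1}}[f_j^x\text{ non-constant}]=\sum_{j=1}^n\mathbb{E}_x h(f_j^x)$ for the $h$ of item 2; Theorem~\ref{thm:friedgut-junta} then gives the junta bound with $C=C_0$, and the reduction to arbitrary $k$ is the block-embedding argument at the end of the proof of Theorem~\ref{thm:main}, applied without change.

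I do not expect a genuine conceptual obstacle, since Theorem~\ref{thm:prior} is a compilation; the only real care needed is curatorial. One must check that each of the five imported statements is transcribed under the normalisation that makes it take the stated ``$h$-influence'' form: the uniform measure on $[k]$ used for $\Var(F)$ in item 3; the values $d=2$ and $\rho(C_k)=\Theta(1/k^2)$ in item 5, together with the fibre bookkeeping above; and, in item 4, the binary-entropy normalisation ($\log_2$, not $\ln$) as well as the verification that the subcube discretisation of \cite{keller} preserves monotonicity and perturbs the $L^1$-distance only by a bounded factor. Each of these affects only the unspecified absolute constant $C$ — which, as the parenthetical in the statement warns, already differs from item to item — so the argument goes through as described.
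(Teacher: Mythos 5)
The paper offers no proof of this theorem: it is explicitly a compilation of prior results, each item being a citation (to \cite{fd}, \cite{hatami}, \cite{keller}, \cite{s-t}) except item 6, which is Theorem \ref{thm:main} rewritten in per-fibre form. Your treatment of items 1, 3, 4 and 5 as imported statements, and your derivation of item 6 from Theorem \ref{thm:main} via the identity $\sum_{j}\sum_{x\in[k]^{n-1}}|\partial f_j^x|=|\partial A|$ (each grid edge lies in exactly one fibre), is exactly what the paper intends, and that identity is correct.

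The one place you go beyond citation is your direct proof of item 2, and there the general-$k$ step has a genuine flaw. For $k=2^s$ your argument is fine: each block $B_j$ contributes total influence at most $s=\log_2 k$ on any fibre where $f_j^x$ is non-constant, and $0$ otherwise. But the block-embedding reduction at the end of the proof of Theorem \ref{thm:main} cannot be ``applied without change'' here. That reduction chooses the power of two $l$ so that $(1+k/(l-k))^n\le 2$, which forces $l=\Omega(kn)$; running your fibre bound on $[l]^n$ then yields a factor $\log_2 l=\log_2 k+\Theta(\log_2 n)$ in place of $\log_2 k$, and $\log_2 l/\log_2 k$ is unbounded when $n\gg k$. (The probability of fibre non-constancy does transfer up to a factor $2$, as in Claim \ref{claim:est}; it is only the $\log_2 k$ factor that breaks.) In Theorem \ref{thm:main} itself this problem does not arise because the quantity being controlled, $|\partial \breve{A}|/l^{n-1}$, scales correctly with $l$. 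So either repair the reduction for item 2 (which requires a different device, since $[k]$ with $k$ not a power of $2$ admits no measure-preserving binary encoding) or, as the paper does, simply cite Section~2.2 of \cite{fd}, where the statement is derived for general product spaces. With that correction the proposal matches the paper's intent.

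One minor caveat on item 5: Theorem \ref{thm:s-t} as stated applies to weak products of regular graphs, hence to the torus $T_{k,n}=C_k^{\square n}$ rather than to the grid $G_{k,n}$ (the path $P_k$ is not regular); your bookkeeping with $d=2$ and $\rho(C_k)=\Theta(1/k^2)$ is otherwise consistent with the stated form $h(F)=k|\partial F|$, and any discrepancy between torus and grid boundaries only helps in the direction $|\partial A|\le|\partial' A|$.
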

It should be mentioned that Dinur and Friedgut conjectured in \cite{fd} that
assertion (1) above holds also for arbitrary subsets of $[k]^n$. However, Hatami ~\cite{hatami} disproved this conjecture, constructing an example (based on a decision tree) which shows that if $h$ in the theorem is of the form
$$h(F) = \gamma(k) 1\{F \mbox{ is non-constant}\},$$
then we must have $\gamma(k) \geq c \log_2 k$. (This also follows from Example \ref{example:decision-tree}, below.) Hence, assertion (2) above is sharp up to the value of the absolute constant $C$.

Note also that while several of the statements are strictly stronger than others (for example, (4) is strictly
stronger than (1)), there are several pairs of statements which are incomparable. In particular, our
main theorem (6) does not necessarily improve over bound (2), since $|\partial (f_j^x)|$
can be as large as $k-1$ (for example, if $f(x) = \parity(\sum_{i=1}^{n}x_i)$), whereas the function $h$ in (2) is at most $\log_2 k$.

\subsection{The refined junta theorem}

In this section, we prove our refined junta theorem for the $\ell^1$-grid. (For ease of comparison with the prior results, we state it this time in the same form as Theorem \ref{thm:prior}.)
\begin{theorem}
\label{thm:refined-junta}
Let $A \subset [k]^n$, and let $f = 1_{A}$. Then for any $\epsilon >0$, $f$ is $\epsilon$-close to some
$M$-junta $g:[k]^n \to \{0,1\}$ , with
$$M \leq \exp \left( \frac{C_4}{\epsilon} \sum_{j=1}^n \mathbb{E}_{x \in [k]^{n-1}} h^{*}(f_j^x) \right),$$
where $C_4$ is an absolute constant, and
$$h^*: \{0,1\}^{[k]} \to \mathbb{R};\quad h^*(F) = \begin{cases} 0 & \mbox{if }F \mbox{ is constant};\\
\Var(F) \log_2{ \frac{|\partial F|}{\Var(F)}} & \mbox{otherwise.}\end{cases}$$
\end{theorem}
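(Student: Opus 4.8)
The plan is to run the same binary-encoding argument used in the proof of Theorem \ref{thm:main}, but to be more careful about how a fibre $f_j^x$ in $[k]$ contributes to the total influence of the encoded function on the cube. Recall that for $k = 2^s$ we embed $[k]^n$ into $\{0,1\}^{sn}$ by writing each coordinate in binary, splitting $[sn]$ into blocks $B_1,\dots,B_n$ of size $s$. In the crude argument, Claim \ref{claim:binary} bounded the number of ``bit-flip disagreements'' within a single fibre by $(k-1)m$, where $m = |\partial(f_j^x)|$; this eventually produced the bound with $h(F) = |\partial F|$. The key new idea is that this crude estimate wastes a lot when $F$ has small variance: a fibre whose ``up-set'' is a single short interval (say of length $\ell \ll k$) has $|\partial F| \le 2$ but, more importantly, $\Var(F) \approx \ell/k$, and one should only pay roughly $\Var(F)\log_2(1/\Var(F))$ worth of cube-influence for it rather than the full $|\partial F|\log_2 k$. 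So the first step is to prove a sharper version of Claim \ref{claim:binary}: for a fixed fibre-restriction $F:[k]\to\{0,1\}$ with boundary set $\mathcal S = \{b : F(b)\ne F(b+1)\}$, one has
\[
\sum_{i=1}^s \Inf_i(F\circ\phi)\ \le\ C\,\nu(\{F=1\})\,\log_2\!\Big(\tfrac{|\partial F|}{\nu(\{F=1\})}\Big) \;=\; C\,h^*(F),
\]
for an absolute constant $C$. The point is that $\Inf_i(F\circ\phi) = 2^{-(i-1)}\cdot(\text{number of length-}2^{i-1}\text{ dyadic blocks on which }F\text{ is non-constant})/2^{\,?}$ — more precisely $\Inf_i(F\circ\phi)$ equals $2^{1-s}$ times the count of $z$ with $z_i=0$ and $F(z)\ne F(z+2^{i-1})$, and each such $z$ forces a point of $\mathcal S$ in the dyadic block $[z, z+2^{i-1})$. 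Summing over the scales $i$, each $b\in\mathcal S$ is ``charged'' by at most one dyadic block per scale, and the total mass it picks up is dominated geometrically by the largest scale at which its block is still ``pure'' on one side; bounding this scale by (roughly) $\log_2$ of the density of $\{F=1\}$ near $b$, and then summing over $b\in\mathcal S$ using a convexity/Jensen step, yields an $h^*$-type bound. This is essentially the one-dimensional heart of the matter.

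The second step is routine: average the fibre inequality over $x\in[k]^{n-1}$ to get $\sum_{i\in B_j}\Inf_i(\tilde f)\le C\,\E_{x}\,h^*(f_j^x)$, sum over $j\in[n]$, and apply the cube Junta theorem (Theorem \ref{thm:friedgut-junta}) to $\tilde f$ on $\{0,1\}^{sn}$ to obtain an $M$-junta $\tilde g$ with $M\le \exp\big(\tfrac{C_0 C}{\epsilon}\sum_j\E_x h^*(f_j^x)\big)$; pulling back via $\phi_{(n)}^{-1}$ gives the $[k]^n$-junta $g$ with the same $M$. This proves the theorem when $k$ is a power of $2$. One subtlety to check: since $h^*$ is not monotone in $|\partial F|$ in an obvious way, and since $\Var$ and $|\partial F|$ behave predictably under the block-refinement trick used for general $k$ in Theorem \ref{thm:main}, I would verify that the passage from a general $k$ to a power-of-$2$ side length $l$ multiplies each $\E_x h^*(f_j^x)$ by only an absolute constant — which it should, since blowing up each coordinate by a factor $\approx l/k$ roughly preserves $\Var$ and multiplies $|\partial F|$ by $O(l/k)$, changing the logarithm by an additive $O(\log(l/k)) = O(1)$ term that is itself controlled by $\Var$ (using $(1+k/(l-k))^n\le 2$). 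Alternatively one can sidestep this by proving the bound directly for all $k$ via a dyadic partition of $[k]$ adapted to $F$.

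I expect the main obstacle to be Step 1 — specifically, getting the logarithmic factor to be $\log_2(|\partial F|/\Var(F))$ rather than something like $\log_2(k/\Var(F))$ or $|\partial F|\cdot\log_2(\dots)$. The naive charging argument assigns to each boundary point $b\in\mathcal S$ a geometric sum over scales, which is fine, but to see that the ``stopping scale'' for $b$ is governed by the \emph{local} density of $\{F=1\}$ and then to aggregate these local contributions into the single global quantity $\Var(F)\log_2(|\partial F|/\Var(F))$ requires a clean convexity argument: one partitions $\mathcal S$ according to the sizes of the maximal constant runs of $F$, bounds the contribution of each class, and optimizes. The function $p\mapsto p\log_2(1/p)$ being concave is what makes the aggregation work, and the appearance of $|\partial F|$ (rather than $2$ or $k$) inside the log comes from the fact that $\mathcal S$ has exactly $|\partial F|$ elements, so after splitting the total variance among $|\partial F|$ runs the ``worst case'' balancing gives each run variance $\sim \Var(F)/|\partial F|$ and hence log-cost $\sim\log_2(|\partial F|/\Var(F))$ per unit of that variance. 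Making this heuristic into a rigorous inequality valid for \emph{every} $F$, including pathological ones like the parity fibre where $|\partial F|=k-1$ and $\Var(F)=1/4$ (so $h^*(F)=\Theta(\log k)$, recovering assertion (2) of Theorem \ref{thm:prior} up to constants), is the crux.
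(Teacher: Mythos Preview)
Your overall architecture matches the paper's proof: refine the per-fibre influence estimate, average, apply Theorem~\ref{thm:friedgut-junta}, then pass from powers of $2$ to general $k$ via the block refinement. Your Step~1 is the right target, and the ingredients you name---decompose $F$ into its maximal constant runs (equivalently, intervals $I_1,\dots,I_{m'}$ with $m' \approx |\partial F|/2$), bound the cube-influence contribution of each interval, then aggregate via concavity of $t\mapsto t\log_2(k/t)$---are exactly what the paper does. The paper's organization is somewhat cleaner than your boundary-point charging language: it first proves that a single interval $I$ with $|I|\le k/2$ satisfies $|\tilde\partial I|\le 6|I|\log_2(k/|I|)$ by bounding $|\tilde\partial_i I|\le\min(2^i,2|I|)$ at each scale $i$, and then Jensen over the intervals gives $|\tilde\partial(A_j^x)|\le 6\ell\log_2(mk/\ell)$ where $\ell=|A_j^x|$ and $m=|\partial(A_j^x)|$, which after symmetrizing in $\ell\mapsto k-\ell$ becomes the $h^*$ bound. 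Your ``stopping scale governed by local density'' heuristic is a reasonable way to discover this, but the interval-by-interval accounting is what makes it rigorous.

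There is one genuine slip in your general-$k$ discussion. You write that blowing up by a factor $\approx l/k$ ``multiplies $|\partial F|$ by $O(l/k)$, changing the logarithm by an additive $O(\log(l/k))=O(1)$ term''. Both halves of this are wrong: first, the block refinement preserves $|\partial F|$ \emph{exactly} (each boundary edge in the $[k]$-fibre becomes a single boundary edge in the $[l]$-fibre, since adjacent blocks abut at one point); second, $\log(l/k)$ is \emph{not} $O(1)$---the condition $(1+k/(l-k))^n\le 2$ forces $l\gtrsim nk$, so $\log(l/k)=\Theta(\log n)$. If $|\partial F|$ really did pick up a factor of $l/k$, your argument would collapse. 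The correct observation (which the paper uses) is that $|\partial \breve f_j^y|=|\partial f_j^x|$ exactly, while $\Var(\breve f_j^y)\le\tfrac{9}{4}\Var(f_j^x)$, and since $t\mapsto t\log_2(c/t)$ is increasing on $(0,1/4)$ this gives $h^*(\breve f_j^y)\le\tfrac{9}{4}h^*(f_j^x)$ pointwise.
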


The following easy claim shows that Theorem~\ref{thm:refined-junta} generalizes all six bounds given above (up to an absolute constant factor).
\begin{claim}
There exists an absolute constant $c>0$, such that for all functions $F:[k] \to \{0,1\}$,
$h^*(F) \leq c \cdot h(F)$, for each of the six definitions of $h$ given in Theorem \ref{thm:prior}.
\end{claim}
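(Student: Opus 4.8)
The plan is to reduce the claim to two elementary one-variable estimates and then dispose of the six cases one at a time. First I would dispense with the trivial case: if $F$ is constant then $h^*(F)=0=h(F)$ for all six definitions, so assume $F$ is non-constant, write $S=\{y\in[k]:F(y)=1\}$, $s=|S|\in\{1,\dots,k-1\}$, $V=\Var(F)=s(k-s)/k^2\in(0,\tfrac14]$, and $b=|\partial F|\in\{1,\dots,k-1\}$. The one structural fact I would record is $b/V\le k^2$: since the path on $[k]$ has only $k-1$ edges we have $b\le k-1$, and $s(k-s)\ge k-1$ (the product is minimised at $s\in\{1,k-1\}$), so $b/V=bk^2/(s(k-s))\le k^2$. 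Combined with $b/V\ge 1/V\ge 4$ this gives $2\le\log_2(b/V)\le 2\log_2 k$, and hence the two inequalities I will use repeatedly:
\[
h^*(F)=V\log_2(b/V)\le 2V\log_2 k,\qquad h^*(F)\le\tfrac14\log_2(b/V)\le\tfrac12\log_2 k.
\]

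With these in hand, the four ``general'' cases are immediate. For Hatami's bound (3), $h(F)=V\log_2 k$, and the first inequality gives $h^*(F)\le 2h(F)$. For the Dinur--Friedgut bound (2), $h(F)=\log_2 k$, and the first inequality together with $V\le\tfrac14$ gives $h^*(F)\le\tfrac12\log_2 k=\tfrac12 h(F)$. For the Sachdeva--Tulsiani bound (5), $h(F)=kb\ge k\ge\log_2 k$, so by the second inequality $h^*(F)\le\tfrac12\log_2 k\le\tfrac12 h(F)$. For bound (6), $h(F)=b$; writing $u=b/V\ge 4$ we have $h^*(F)=b\cdot(\log_2 u)/u$, and since $u\mapsto(\log_2 u)/u$ is decreasing on $[e,\infty)\supseteq[4,\infty)$ its value there is at most $(\log_2 4)/4=\tfrac12$, so $h^*(F)\le\tfrac12 h(F)$.

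The remaining bounds (1) and (4) are the ones stated only for monotone functions, and there I would use that a monotone non-constant $F$ has $b=1$, so $h^*(F)=V\log_2(1/V)$. For bound (1), $h(F)=1$, and since $x\mapsto x\log_2(1/x)$ is increasing on $(0,1/e)\supseteq(0,\tfrac14]$ we get $h^*(F)\le\tfrac14\log_2 4=\tfrac12=\tfrac12 h(F)$. For Keller's bound (4), $h(F)=\mathrm{Ent}(p)$ with $p=\Pr[F=1]$; both sides are symmetric under $p\mapsto 1-p$, so I may assume $p\le\tfrac12$, whence $p/2\le V\le p$ and therefore $h^*(F)=V\log_2(1/V)\le p\log_2(2/p)=p+p\log_2(1/p)$. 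Splitting $\mathrm{Ent}(p)=p\log_2(1/p)+(1-p)\log_2\tfrac{1}{1-p}\ge p\log_2(1/p)$ and using $p\log_2(1/p)\ge p$ (valid since $p\le\tfrac12$) gives both $p\le\mathrm{Ent}(p)$ and $p\log_2(1/p)\le\mathrm{Ent}(p)$, hence $h^*(F)\le 2\,\mathrm{Ent}(p)=2h(F)$. Taking $c=2$ handles all six cases.

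I do not expect a real obstacle here. The only point needing slight care is bound (4): one should resist comparing $V\log_2(1/V)$ with $\mathrm{Ent}(p)$ directly, and instead bound the two summands $p$ and $p\log_2(1/p)$ separately by $\mathrm{Ent}(p)$. I would also make explicit in the statement that for definitions (1) and (4) the comparison is over monotone $F$ --- exactly the regime in which those bounds of Theorem~\ref{thm:prior} hold, and in which the fibre restrictions $f_j^x$ appearing in the proof of the refined theorem are themselves monotone; for arbitrary $F$ the inequality $h^*(F)\le c\cdot h(F)$ genuinely fails for (1) and (4) (e.g.\ an alternating $F$ with $p=\tfrac12$ has $h^*(F)=\Theta(\log k)$ while $h(F)=\Theta(1)$).
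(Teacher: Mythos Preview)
Your proof is correct and follows essentially the same approach as the paper: both dispose of the constant case trivially, use $|\partial F|=1$ in the monotone case for (1) and (4), and handle the remaining cases via the elementary bounds $\Var(F)\le\tfrac14$ and $|\partial F|/\Var(F)\le k^2$. The only noteworthy difference is in case (4): the paper observes directly that $V\log_2(1/V)\le\mathrm{Ent}(p)$ (which follows termwise from $V=p(1-p)\le\min(p,1-p)$ after expanding $\log_2(1/V)=\log_2(1/p)+\log_2(1/(1-p))$), giving constant $1$ there, whereas you take a slightly longer route via $V\log_2(1/V)\le p+p\log_2(1/p)$ to get constant $2$.
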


\begin{proof}
If $F:[k] \to \{0,1\}$ is non-constant, then
\begin{align}
\label{eq:F-non-constant}
h^*(F) &= \Var(F) \log_2{ \frac{|\partial F|}{\Var(F)}} \nonumber\\
& = \Var(F) \left(\log_2 |\partial F| +
\log_2{ \frac{1}{\Var(F)}}\right) \nonumber \\
&\leq 3\Var(F) \log_2 k,
\end{align}
where the last inequality holds since $\log_2 |\partial F| \leq \log_2 k$ and $\log_2(1/\Var(F)) \leq \log_2(k^2/(k-1)) \leq 2 \log_2 k$. Hence, if $f_j^x$ is constant, then $h^{*}(f_j^x)=0$, and if not,
then $h^{*}(f_j^x) \leq c\Var(f_j^x) \cdot \log_2 k$. This shows that
Theorem~\ref{thm:refined-junta} generalizes assertions (2) and (3) above.

In the monotone case, if $f_j^x$ is non-constant then we have $|\partial (f_j^x)|=1$, and therefore
\[h^{*}(f_j^x) = \Var(f_j^x) \log_2{\frac{1}{\Var(f_j^x)}} < \mathrm{Ent}(\Pr[f_j^x(y)=1]) < 1,\]
which shows that Theorem~\ref{thm:refined-junta} generalizes assertions (1) and (4) above.

Finally, Theorem~\ref{thm:refined-junta} generalizes assertions (5) and (6) above, since if $F:[k] \to \{0,1\}$ is non-constant, then
\[
h^{*}(F) = \Var(F) \left(\log_2 |\partial F| + \log_2{ \frac{1}{\Var(F)}}\right) \leq
\tfrac{1}{4}\log_2 |\partial F| +\tfrac{1}{2} \leq |\partial F|,
\]
using the facts that $\Var(F) \leq 1/4$, that the function $t \mapsto t \log_2 (1/t)$ is increasing on $(0,1/4)$, and that $|\partial F| \geq 1$.
\end{proof}

It should be noted that for each of the statements 2, 3, 4, 5 and 6 above, there are cases in which
the assertion of Theorem~\ref{thm:refined-junta} outperforms them significantly. For example, if on each fibre where $f_j^x$ is non-constant, the size of the restricted boundary is constant but the restricted measure is far from 0 and 1, then
Theorem~\ref{thm:refined-junta} is stronger than statement 2 and statement 3 by a factor of $\Omega(\log_2 k)$ in the exponent. This is the case in Example \ref{example:tribes}, for which Theorem \ref{thm:main} (and therefore Theorem \ref{thm:refined-junta}) is sharp up to a constant factor in the exponent.

On the other hand, if on each fibre where $f_j^x$ is non-constant, the restricted boundary has size $\Theta(k)$, then Theorem~\ref{thm:refined-junta}
is stronger than Theorem \ref{thm:main} by a factor of $\Omega(k/(\log_2 k))$ in the exponent. To see that this may occur, we use a `random' variant of Hatami's decision tree construction in \cite{hatami}, which we now describe in detail.

\begin{example}
\label{example:decision-tree}
We first recall the notion of a function defined by a decision tree. Let $T$ be a rooted, finite, directed $k$-ary tree (i.e., each non-leaf node has $k$ children). Suppose that each non-leaf node $v$ is labelled with an index $i_v \in [n]$, and each of the $k$ directed edges emanating from $v$ is labelled with a different element of $\{1,2,\ldots,k\}$. Suppose further that each leaf $w$ is labelled with a value $\val(w) \in \{0,1\}$. This (labelled) decision tree $T$ defines a function $f_{T}:[k]^n \to \{0,1\}$ as follows. Fix $x \in [k]^n$. To specify $f(x)$, we define a directed path $P = P(x)$ in $T$ as follows. Start at the root, and whenever we are at a non-leaf vertex $v$, look at the index $i_v$ and go to the child $v'$ of $v$ such that the directed edge $(v,v')$ has label $x_{i_v}$. Continue until we reach a leaf $w$; then let $f(x) = \val(w)$.

Now we show that there exists a decision tree with the desired property. Let $d \geq 2$, and let $k$ be even. Choose $T$ to be a $k$-ary tree of depth $d$, meaning that each leaf lies on a path of length $d$ from the root, so that there are $k^d$ leaves. Choose $n = \sum_{s = 0}^{d-1}k^s$ to be the number of non-leaf vertices of $T$, and label each non-leaf vertex $v$ of $T$ with a different index $i_v \in [n]$ (using an arbitrary bijection). Let $\mathcal{L}(T)$ be the set of leaves of $T$. We now choose a function $\val: \mathcal{L}(T) \to \{0,1\}$ at random, as follows. For each leaf-parent $v$, let $C(v)$ denote its set of $k$ children (these are all leaves). For each leaf-parent $v$ independently, choose a $(k/2)$-subset $S(v) \subset C(v)$ (uniformly at random from all ${k \choose k/2}$ such sets), and define
$$\val(w) = \begin{cases} 1 & \textrm{ if } w \in S(v);\\
0 & \textrm{ if } w \in C(v) \setminus S(v).\end{cases}$$
We let $\mathcal{V}$ denote the set of all valuations obtained in this way; note that the function $\val$ is chosen uniformly at random from the set $\mathcal{V}$. (This may be compared with Hatami's deterministic example in \cite{hatami}, where $\val(w) = \lfloor 2a/k \rfloor$, if $a$ is the label of the edge between $w$ and its parent node.)

We now show that the associated function $f_T$ has the desired property with positive probability. Fix an index $j \in [n]$ and suppose it labels a node at distance $s$ from the root. (Abusing notation slightly, if $x \in [k]^{n-1}$ and $x_j \in [k]$, we will write $(x,x_j)$ for the vector produced from $x$ by inserting $x_j$ between the $(j-1)$th and the $j$th entries of $x$.)

Observe that for each $x \in [k]^{n-1}$ and each $x_j \in [k-1]$, we have $f_T(x,x_j) \neq f_T(x,x_j+1)$ if and only if (I) the path $P = P(x,x_j)$ goes through the node labelled with $j$, {\em and} (II) the leaves $w$ and $w'$ at which the paths $P(x,x_j)$ and $P(x,x_j+1)$ (respectively) end, have $\val(w) \neq \val(w')$.

First, assume that $s \leq d-2$, i.e. that $j$ does not label a leaf-parent. The event (I) depends only on $x$ (not on $x_j$), and has probability $1/k^s$, since $P$ is equally likely to go through any of the $k^s$ nodes at distance $s$ from the root. Conditional on the event (I) occurring, the event (II) occurs with probability $1/2$, since if (I) occurs, then the paths $P(x,x_j)$ and $P(x,x_j+1)$ go through different leaf-parents, so $\val(w)$ and $\val(w')$ are independent Bernoulli($\tfrac{1}{2}$) random variables, and so $\Pr_{\val}\{\val(w) \neq \val(w')\} = 1/2$. Hence,
$$\Pr_{\val} \Pr_{x \in [k]^{n-1}} \{f_T(x,x_j) \neq f_T(x,x_j+1)\} = \frac{1}{k^s} \cdot \frac{1}{2} = \frac{1}{2k^s}.$$
Summing over all $x_j \in [k-1]$, the linearity of expectation now yields
$$\mathbb{E}_{\val} \mathbb{E}_{x \in [k]^{n-1}} |\partial((f_T)_{j}^x)| = (k-1) \cdot \frac{1}{2k^s} = \frac{k-1}{2k^s}$$
for all $j \in [n]$ labelling a node at distance $s \leq d-2$ from the root.

Now assume that $s=d-1$, so that $j$ labels a leaf-parent, $v$ say. Then as before, the event (I) depends only on $x$ (not on $x_j$), and has probability $1/k^{d-1}$, since $P$ is equally likely to go through any of the $k^{d-1}$ nodes at distance $d-1$ from the root. Conditional on the event (I) occurring, the leaves $w$ and $w'$ are distinct children of $v$, so the probability of the event (II) occurring is now
\begin{align*} \Pr\{\val(w) \neq \val(w')\} & = 1-\Pr\{w,w' \in S(v)\} - \Pr\{w,w' \in C(v) \setminus S(v)\}\\
&= 1-2\cdot \frac{{k-2 \choose k/2-2}}{{k \choose k/2}}\\
& =\frac{k}{2(k-1)}.\end{align*}
Summing over all $x_j \in [k-1]$, the linearity of expectation yields
$$\mathbb{E}_{\val} \mathbb{E}_{x \in [k]^{n-1}} |\partial((f_T)_{j}^x)| = (k-1) \cdot \frac{1}{k^{d-1}} \cdot \frac{k}{2(k-1)} = \frac{1}{2k^{d-2}}$$
for all $j \in [n]$ labelling a node at distance $d-1$ from the root.

There are $k^s$ indices $j \in [n]$ of distance $s$ from the root, for each $s \in \{0,1,2,\ldots,d-1\}$. Summing over all indices $j \in [n]$ therefore gives
\begin{align*} \mathbb{E}_{\val} \left[\sum_{j=1}^{n} \mathbb{E}_{x \in [k]^{n-1}} |\partial((f_T)_{j}^x)| \right]& = \sum_{j=1}^{n} \mathbb{E}_{\val} \mathbb{E}_{x \in [k]^{n-1}} |\partial((f_T)_{j}^x)|\\
& = \sum_{s=0}^{d-2} k^s \cdot \frac{k-1}{2k^s} + k^{d-1} \cdot \frac{1}{2k^{d-2}} \\
&= \tfrac{1}{2}(d-1)(k-1) + \tfrac{1}{2} k.\end{align*}
Hence, there exists a valuation $\val'$ such that
$$\sum_{j=1}^{n} \mathbb{E}_{x \in [k]^{n-1}} |\partial((f_T)_{j}^x)| \geq \tfrac{1}{2}(d-1)(k-1)+ \tfrac{1}{2}k \geq \tfrac{1}{4}dk,$$
when $f_{T}$ is defined using $\val'$.

On the other hand, for each fibre $a_j^x$ on which $f_T$ is non-constant, we have
$$h^*((f_T)_{j}^x) \leq 3 \log_2(k),$$
by (\ref{eq:F-non-constant}). For an arbitrary valuation $\val$, $f_T$ is non-constant on $a_j^x$ only if the path $P = P(x,x_j)$ goes through the node labelled with $j$; this happens with probability $1/k^s$, where $s$ is the distance from the root to the node labelled with $j$, by exactly the same argument as above. Therefore, we have
$$\mathbb{E}_{x \in [k]^{n-1}} h^*((f_T)_{j}^x) \leq \frac{3\log_2(k)}{k^s}$$
for all $j \in [n]$ labelling a node at distance $s$ from the root, so
$$\sum_{j=1}^{n} \mathbb{E}_{x \in [k]^{n-1}} h^*((f_T)_{j}^x) \leq \sum_{s=0}^{d-1} k^s \cdot \frac{3\log_2 (k)}{k^s} = 3d\log_2(k).$$
This holds for an arbitrary valuation $\val$, so in particular for $\val'$.

We may conclude that there exists a function $f_T$ (namely, the decision-tree function defined by $\val'$) for which the bounds on the sizes of the approximating juntas in Theorems \ref{thm:main} and \ref{thm:refined-junta} differ by a factor of $\Omega(k/\log_2 k)$, as desired.

It is also easy to see that if $g:[k]^n \to \{0,1\}$ is an $M$-junta with $||f_T - g||_1 < 1/4$, then $M > k^{d-1}/2$. (Exactly the same argument as in \cite{hatami} can be used. Namely, if $g:[k]^n \to \{0,1\}$ is an $M$-junta, where $M \leq k^{d-1}/2$, then $g$ must remain constant whenever $x_{i_v}$ is varied and all other coordinates are fixed, for at least half of all leaf-parents $v$. On the other hand, for any valuation $\val \in \mathcal{V}$, if $v$ is any leaf-parent, and $x_{i_v}$ is varied while all the other coordinates are fixed, then $f_T$ takes the value $0$ exactly $k/2$ times, and the value 1 exactly $k/2$ times.) Hence, for the function $f_T$, Theorem \ref{thm:refined-junta} is sharp up to a constant factor in the exponent.
\end{example}

\subsection{Proof of the refined junta theorem (Theorem \ref{thm:refined-junta})}

The general proof strategy is the same as in the proof of Theorem~\ref{thm:main}. Namely, first we prove
the theorem in the special case where $k$ is a power of 2, and then generalise to arbitrary $k$. The latter generalisation is very similar to in Theorem~\ref{thm:main}. Hence, the main extra ingredient required is a refinement of Claim~\ref{claim:binary}.

Recall that Claim~\ref{claim:binary} asserts that if $k=2^s$ and $a_j^x$ is a fibre,
naturally identified with $[k]$, then
\begin{equation}\label{eq:restate}
\#\{(z,i) \in [k] \times [s]:\ z_i = 0,\ f_j^x(z) \neq f_j^x(z + 2^{i-1})\}
\leq (k-1) |\partial(f_j^x)|.
\end{equation}
Write $f = 1_{A}$, where $A \subset [k]^n$. Let us denote the restriction of $A$ to the fibre $a_j^x$ by $A_j^x$, and consider $A_j^x$ as a subset of $[k]$. Let
$$\phi: \{0,1\}^s \to [k];\quad (x_1,\ldots,x_s) \mapsto 1+\sum_{i=1}^{s}x_i 2^{i-1},$$
as in the proof of Theorem \ref{thm:main}. Then the LHS of (\ref{eq:restate}) is precisely $|\partial_{Q_s}(\phi^{-1}(A_j^x))|$, the size of the edge-boundary (in the discrete cube) of the set $\phi^{-1}(A_j^x) \subset \{0,1\}^s$. For brevity, we introduce the following notation. If $B \subset [k]$, define $\tilde{\partial}(B) = \partial_{Q_s}(\phi^{-1}(B))$, the edge-boundary (in the discrete cube) of the set $\phi^{-1}(B) \subset \{0,1\}^s$. Similarly, for each $i \in [s]$, we define $\tilde{\partial}_i(B)$ to be the set of direction-$i$ edges in $\tilde{\partial}(B)$, so that $\tilde{\partial}(B) = \dot\cup_{i=1}^s \tilde{\partial}_i(B)$. Then we have
$$|\tilde{\partial}_i(B)| = \#\{z \in [k]:\ z_i = 0,\ 1_B(z) \neq 1_B(z + 2^{i-1})\}\quad (i \in [s]),$$
and
$$|\tilde{\partial}(B)| = \#\{(z,i) \in [k] \times [s]:\ z_i = 0,\ 1_B(z) \neq 1_B(z + 2^{i-1})\}.$$
In this new notation, Claim~\ref{claim:binary} states that for all $j$ and $x$,
$$|\tilde{\partial}(A_j^x)| \leq (k-1) |\partial(A_j^x)|.$$
We will improve this inequality in two steps:
\begin{enumerate}
\item Obtain an improved upper bound on $|\tilde{\partial}I|$ for
any interval $I \subset [k]$, in terms of $|I|$.
\item Represent $A_j^x$ as a disjoint union of intervals in $[k]$, where the number
of intervals depends on $|\partial(A^x_j)|$, and combine the bounds corresponding to intervals into an upper bound for $|\tilde{\partial}(A_j^x)|$, using a convexity argument.
\end{enumerate}
The idea behind the proof of (1) is to generalize the proof strategy used in Proposition~2.3
of~\cite{keller} in the monotone case. If $A$ is monotone, then on each fibre $a_j^x$,
the set $A^x_j$ is of the special form $I=\{1,2,\ldots,\ell\}$ for some $\ell$. It was shown in~\cite{keller}
that for such an `initial interval' $I$, $|\tilde{\partial}I|$ can be easily
bounded from above in terms of $|I|$. In the general case considered here, the set $A_j^x$ does not
have such a nice structure, but it can be represented as a disjoint union of intervals
$I=\{\ell_1,\ell_1+1,\ldots,\ell_2-1,\ell_2\} \subset [k]$. We show that the argument of~\cite{keller}
applies (with a different constant) also for general intervals.

\begin{lemma}\label{lemma:refined}
Let $k=2^s$, for some $s \in \mathbb{N}$, and let $I \subset [k]$ be an interval with $0 < |I| \leq k/2$. Then
\[
|\tilde{\partial}(I)| \leq 6 |I| \log_2(k/|I|).
\]
\end{lemma}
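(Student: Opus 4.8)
The plan is to bypass the recursive structure used in \cite{keller} and instead compute $|\tilde{\partial}(I)|$ almost exactly, one direction at a time. Write the interval as $I = \{a+1, a+2, \ldots, b\}$, so that $\phi^{-1}(I)$ consists of the binary strings of the consecutive integers $a, a+1, \ldots, b-1$, sitting inside $Q_s$ (which we identify with $\{0,1,\ldots,2^s-1\}$), and put $m := |I| = b-a \leq 2^{s-1}$. Since $\tilde{\partial}(I) = \dot\cup_{i=1}^s \tilde{\partial}_i(I)$, it is enough to bound each $|\tilde{\partial}_i(I)|$ separately.

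First I would establish the exact formula for $|\tilde{\partial}_i(I)|$. A direction-$i$ edge joins $n$ and $n \oplus 2^{i-1}$; splitting according to whether the bit in position $i-1$ of $n$ is $0$ or $1$, and using that $\{a,\ldots,b-1\}$ is an interval (so that $n+2^{i-1}$ can only leave it through the top end, and $n-2^{i-1}$ only through the bottom end), one obtains
\begin{align*}
|\tilde{\partial}_i(I)| &= \#\{n \in [\max(a,\, b-2^{i-1}),\, b-1] : \text{the bit in position }i-1\text{ of }n\text{ is }0\} \\
&\quad+ \#\{n \in [a,\, \min(b-1,\, a+2^{i-1}-1)] : \text{the bit in position }i-1\text{ of }n\text{ is }1\}.
\end{align*}
This identity is the one place where a little care with off-by-ones is needed; everything afterwards is short.

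Next I would split the directions into two ranges. If $2^{i-1} \geq m$, both windows in the formula are the whole interval $[a,b-1]$, and since every integer has its position-$(i-1)$ bit equal to $0$ or to $1$, the formula collapses to $|\tilde{\partial}_i(I)| = m$; the number of $i \in [s]$ with $2^{i-1} \geq m$ is at most $s - \log_2 m = \log_2(k/m)$, so these directions contribute at most $m\log_2(k/m)$ in total. If instead $2^{i-1} < m$, each of the two windows has exactly $2^{i-1}$ integers, so each count is at most $2^{i-1}$ and hence $|\tilde{\partial}_i(I)| \leq 2^i$; summing this geometric series over all $i$ with $2^{i-1} < m$ gives a total strictly less than $4m$.

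Adding the two contributions, $|\tilde{\partial}(I)| < 4m + m\log_2(k/m)$, and since $m \leq k/2$ forces $\log_2(k/m) \geq 1$, this is at most $5m\log_2(k/m) \leq 6|I|\log_2(k/|I|)$, which is the claimed bound (with room to spare — in fact the constant $5$ works). The only genuine obstacle is the exact direction-$i$ count above; once it is in hand, the large-$i$/small-$i$ dichotomy — morally, ``directions with $2^{i-1}\geq m$ see $I$ sitting inside a long constant run of that bit, so flipping it is always a boundary move, and there are only $\log_2(k/m)$ of them, while the remaining $\approx\log_2 m$ directions contribute only a geometric sum'' — finishes the proof.
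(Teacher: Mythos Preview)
Your argument is correct and follows essentially the same route as the paper: bound $|\tilde{\partial}_i(I)|$ direction by direction, split according to whether $2^{i-1}$ is below or above $|I|$, use the geometric bound $2^i$ for small $i$ and a bound linear in $|I|$ for large $i$, then sum. Your counting for the large-$i$ range is slightly sharper (you get exactly $|I|$ per direction where the paper uses the cruder $2|I|$), which is why you end up with the constant $5$ rather than $6$, but the structure of the proof is the same.
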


\begin{proof}
Let $q \in \mathbb{Z}_{\geq 0}$ such that $2^q \leq |I| < 2^{q+1}$. We claim that
\[
|\tilde{\partial}_i(I)|  \leq \left\lbrace
  \begin{array}{c l}
    2^{i}, & \mbox{if }1 \leq i \leq q+1;\\
    2|I|, & \mbox{if }q+2 \leq i \leq s.
  \end{array}
\right.
\]
Indeed, since $I$ is an interval in $[k]$, for all $i$, the relation
$1_I(z) \neq 1_I(z + 2^{i-1})$ can be satisfied only for
$\min I - 2^{i-1} \leq z \leq \min I -1$ or $\max I - 2^{i-1}+1 \leq z \leq \max I$.
Hence, $|\tilde{\partial}_i(I)| \leq 2^{i}$. For $i>q+1$, the restriction is even stronger:
$z$ must satisfy either $z \in I$, or $\min I-2^{i-1} \leq z \leq \max I-2^{i-1}$. Thus, $|\tilde{\partial}_i(I)| \leq 2|I|$. Therefore,
\begin{align*}
|\tilde{\partial}(I)| & = \sum_{i=1}^s |\tilde{\partial}_i(I)|\\
& \leq \sum_{i=1}^{q+1} 2^{i} + \sum_{i=q+2}^{s} 2|I|\\
& \leq 2^{q+2} + 2|I|(s-q-1)\\
& \leq 4|I|+2|I|\log_2 \frac{k}{|I|}\\
& \leq 6|I|\log_2(k/|I|).
\end{align*}
\end{proof}

\begin{proposition}\label{prop:refined-binary}
Let $k=2^s$. Let $a_j^x$ be a fibre, naturally identified with $[k]$, and let
$\ell: = |A_j^x|$ and $m:=|\partial(A_j^x)|$. Suppose that $0 < \ell \leq k/2$. Then
\begin{equation}\label{Eq:Refined1}
|\tilde{\partial}(A_j^x)| \leq 6 \ell \log_2(mk/\ell).
\end{equation}
\end{proposition}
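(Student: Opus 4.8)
The plan is to follow the two-step strategy announced just before the statement: decompose $A_j^x$ into intervals, apply Lemma~\ref{lemma:refined} to each interval, and then combine the resulting bounds by a concavity argument.

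\emph{Step 1: interval decomposition.} Since $0 < \ell \leq k/2$, the set $A_j^x \subset [k]$ is nonempty and proper, so it is the disjoint union of its maximal runs of consecutive integers, say $A_j^x = I_1 \,\dot\cup\, I_2 \,\dot\cup\, \cdots \,\dot\cup\, I_r$, where each $I_t = \{a_t,\ldots,b_t\}$ is an interval, $\sum_{t=1}^r |I_t| = \ell$, and consecutive runs are separated by at least one non-element. The key bookkeeping point is that $r \leq m$: each run satisfies $a_t > 1$ or $b_t < k$ (otherwise $I_t = [k]$, contradicting properness), so it contributes at least one edge of $\partial(A_j^x)$, namely $\{a_t-1,a_t\}$ or $\{b_t,b_t+1\}$; and these edges are distinct across runs precisely because maximal runs are separated by gaps.

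\emph{Step 2: passing to the cube boundary.} Since $\phi$ is a bijection, $\phi^{-1}(A_j^x) = \dot\bigcup_{t=1}^r \phi^{-1}(I_t)$ is a disjoint union in $\{0,1\}^s$, and in any graph the edge-boundary of a disjoint union of sets is contained in the union of their edge-boundaries (an edge leaving $\bigcup_t B_t$ has exactly one endpoint in some single $B_{t_0}$ and its other endpoint in no $B_t$, hence leaves $B_{t_0}$). Therefore $|\tilde{\partial}(A_j^x)| \leq \sum_{t=1}^r |\tilde{\partial}(I_t)|$. Each $I_t$ is an interval with $0 < |I_t| \leq \ell \leq k/2$, so Lemma~\ref{lemma:refined} applies to it and gives $|\tilde{\partial}(I_t)| \leq 6|I_t|\log_2(k/|I_t|)$, whence
$$|\tilde{\partial}(A_j^x)| \leq 6 \sum_{t=1}^r |I_t| \log_2(k/|I_t|).$$

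\emph{Step 3: concavity.} The function $g(x) = x\log_2(k/x) = x\log_2 k - x\log_2 x$ is concave on $(0,\infty)$, so by concavity (Jensen's inequality) applied to the $r$ values $|I_1|,\ldots,|I_r|$, which have mean $\ell/r$, we get $\sum_{t=1}^r g(|I_t|) \leq r\,g(\ell/r) = \ell\log_2(kr/\ell)$. Since $r \leq m$ and $\ell > 0$ (and $kr/\ell \geq 1$, so the logarithm is nonnegative), this is at most $\ell\log_2(mk/\ell)$. Combining with Step~2 yields $|\tilde{\partial}(A_j^x)| \leq 6\ell\log_2(mk/\ell)$, which is \eqref{Eq:Refined1}. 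The argument is essentially routine once the interval decomposition and the inequality $r \leq m$ are set up; the only points needing a little care are the subadditivity of the cube edge-boundary under disjoint unions and the check that Lemma~\ref{lemma:refined} applies to each run — that is, that every run has size at most $k/2$ — which holds because the run sizes sum to $\ell \leq k/2$.
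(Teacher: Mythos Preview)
Your proof is correct and follows essentially the same approach as the paper's: decompose $A_j^x$ into its maximal intervals, apply Lemma~\ref{lemma:refined} to each, use subadditivity of $\tilde{\partial}$ under disjoint unions, and then Jensen's inequality for the concave function $x\mapsto x\log_2(k/x)$. The only cosmetic difference is that the paper records the number $m'$ of intervals more precisely as $\lceil m/2\rceil$ or $\lceil m/2\rceil+1$ before invoking $m'\le m$, whereas you argue $r\le m$ directly; both routes are equivalent here.
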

\begin{proof}[Proof of Proposition~\ref{prop:refined-binary}:]
Since $|\partial(A_j^x)|=m$, it follows that $A^x_j$ is a disjoint union of $\lceil m/2 \rceil$ or
$\lceil m/2 \rceil+1$ intervals of consecutive integers. Denote these intervals by $I_1,I_2,\ldots,I_{m'}$.
By Lemma~\ref{lemma:refined}, for each interval $I_i$, we have
\[
|\tilde{\partial}(I_i)| \leq 6 |I_i| \log_2(k/|I_i|).
\]
Clearly, we have
\[
|\tilde{\partial}(A_j^x)| = |\tilde{\partial}(\bigcup_{i=1}^{m'}I_i)| \leq \sum_{i=1}^{m'} |\tilde{\partial}(I_i)|.
\]
Substituting in the assertion of the lemma, we get
\[
|\tilde{\partial}(A_j^x)| \leq 6 \sum_{i=1}^{m'} |I_i| \log_2(k/|I_i|).
\]
Since the function $\varphi(x)=x \log_2(k/x)$ is concave in $(0,k)$, we have
\[
|\tilde{\partial}(A_j^x)| \leq 6m' \frac{\sum_{i=1}^{m'} |I_i|}{m'}
\log_2 \left(\frac{km'}{\sum_{i=1}^{m'} |I_i|} \right) = 6 \ell \log_2(km'/\ell) \leq 6\ell \log_2(km/\ell),
\]
as required.
\end{proof}

\begin{proof}[Proof of Theorem \ref{thm:refined-junta}]
Let $A \subset [k]^n$ and let $f = 1_A$. As before, define $\tilde{f}:= f \circ \phi_{(n)}:\ \{0,1\}^{sn} \to \{0,1\}$.

Proposition \ref{prop:refined-binary} immediately implies that for any $j \in [n]$ and any $x \in [k]^n$ such that $0 < \ell = |A_j^x| \leq k/2$ and $m= |\partial( A_j^x)|$, we have
$$\sum_{i=1}^{s} \Inf_i(\phi^{-1}(A_j^x)) \leq 2^{-s+1} \cdot 6 \ell \log_2(km/\ell) = 12(\ell/k) \log_2(mk/\ell),$$
so certainly
\begin{equation}\label{eq:sym} \sum_{i=1}^{s} \Inf_i(\phi^{-1}(A_j^x)) \leq 24(\ell/k)(1-\ell/k)\log_2\left(\frac{m}{(\ell/k)(1-\ell/k)}\right).\end{equation}
Since this last inequality is symmetric under $\ell \mapsto k - \ell$, it holds for all $\ell \in [k-1]$, and so for all $j$ and $x$ such that $f_j^x$ is non-constant.

By averaging (\ref{eq:sym}) over all $x \in [k]^{n-1}$ (for a fixed $j$), and then summing over
all $j \in [n]$, we obtain:
\begin{align*}\sum_{i=1}^{sn} \Inf_i(\tilde{f}) & \leq 24\sum_{j=1}^n \mathbb{E}_{x \in [k]^{n-1}} \Var(f_j^x) \log_2{ \frac{|\partial f_j^x|}{\Var(f_j^x)}} 1\{f_j^x \textrm{ is non-constant}\}\\
& = 24\sum_{j=1}^n \mathbb{E}_{x \in [k]^{n-1}}h^*(f_j^x).\end{align*}
(Note that on each fibre where $f_j^x$ is non-constant, we have
$$\Var(f_j^x) \log_2{ \frac{|\partial f_j^x|}{\Var(f_j^x)}} = (\ell/k)(1-\ell/k)\log_2\left(\frac{m}{(\ell/k)(1-\ell/k)}\right),$$
in the notation of Proposition~\ref{prop:refined-binary}.)

Let $\epsilon >0$. By Theorem \ref{thm:friedgut-junta}, it follows that for every $\epsilon >0$, $\tilde{f}$ is $\epsilon$-close to some $M$-junta $\tilde{g}:\{0,1\}^{sn} \to \{0,1\}$, where
$$M \leq \exp \left( \frac{24C_0}{\epsilon} \sum_{j=1}^n \mathbb{E}_{x \in [k]^{n-1}} h^*(f_j^x)\right),$$
and $C_0$ is the constant of Theorem \ref{thm:friedgut-junta}. Let $g = \tilde{g} \circ (\phi_{(n)})^{-1}$. Then $f$ is $\epsilon$-close to $g:[k]^n \to \{0,1\}$, and $g$ is also an $M$-junta. This proves
Theorem~\ref{thm:refined-junta} in the case where $k$ is a power of 2.

The generalisation to arbitrary $k$ is similar to in the proof of Theorem~\ref{thm:main}, with one extra complication. Suppose $k \geq 3$. As before, let $l>k$ be a power of $2$ such that
\begin{equation}\label{eq:sizecond} \left(1+\frac{k}{l-k}\right)^n \leq 2,\end{equation}
and partition $[l]^n$ into $k^n$ large, rectangular blocks $(R_x:\ x \in [k]^n)$, where
$$R_x := \{y \in [l]^n:\ (\lceil y_1k/l\rceil, \lceil y_2k/l\rceil,\ldots,\lceil y_nk/l\rceil) = x\}.$$
Recall that
$$\lfloor l/k \rfloor^n \leq |R_x| \leq \lceil l/k \rceil^n \quad \forall x \in [k]^n.$$
Let $A \subset [k]^n$. As before, define
$$\breve{A} = \cup_{x \in A} R_x \subset [l]^n.$$
Let $\breve{f} = 1_{\breve{A}}$, and let $\breve{f}_j^x:[l] \to \{0,1\}$ denote the restriction of $\breve{f}$ to the fibre $a_j^x$.

In the proof of Theorem \ref{thm:main}, it was easy to see that $|\partial \breve{A}|/l^{n-1} \leq 2 |\partial A|/k^{n-1}$. In this case, we need the following analogous claim.
\begin{claim}
\label{claim:est}
For all $j \in [n]$, we have
$$\mathbb{E}_{y \in [l]^{n-1}} h^*(\breve{f}_j^y) \leq \frac{9}{2} \mathbb{E}_{x \in [k]^{n-1}} h^*(f_j^x).$$
\end{claim}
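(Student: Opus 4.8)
The plan is to relate each fibre $\breve a_j^y$ of the blown-up set to a corresponding fibre $a_j^x$ of the original set, via the block decomposition. Fix $j \in [n]$ and $y \in [l]^{n-1}$. The restriction $\breve f_j^y$ lives on $[l]$, and by the definition of $\breve A$ it is constant on each of the $\lceil l/k\rceil$ or $\lfloor l/k\rfloor$ consecutive blocks into which $[l]$ is partitioned in direction $j$; moreover, which value it takes on these blocks is exactly described by the fibre $f_j^x$, where $x \in [k]^{n-1}$ is obtained from $y$ by the coordinate-wise map $\lceil \cdot\, k/l\rceil$. Hence $\breve f_j^y$ is (up to relabelling) the "block-up" of the one-dimensional Boolean function $f_j^x$ from $[k]$ to $[l]$. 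First I would record the two resulting estimates: (i) $|\partial(\breve f_j^y)| = |\partial(f_j^x)|$, since each one-dimensional boundary edge of $f_j^x$ gives exactly one boundary edge between consecutive blocks and no other boundary edges appear; and (ii) $\Var(\breve f_j^y)$ is within a bounded factor of $\Var(f_j^x)$, because the fraction of $[l]$ on which $\breve f_j^y = 1$ equals $\tfrac{1}{l}\sum_{z: f_j^x(z)=1}|\text{block}_z|$, which lies between $\lfloor l/k\rfloor/(l/k) \cdot (\ell/k)$ and $\lceil l/k\rceil/(l/k)\cdot(\ell/k)$ where $\ell = |A_j^x|$; using $l > k$ and \eqref{eq:sizecond} (which forces $l$ large relative to $k$, e.g. $l \geq k^2$ or so), one gets $\tfrac{1}{2}\nu \leq \Var(\breve f_j^y) \leq 2\nu$ where $\nu := \Var(f_j^x)$, and in particular $\breve f_j^y$ is non-constant iff $f_j^x$ is.

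Next I would plug these into the definition of $h^*$. When $f_j^x$ is constant both sides vanish. When it is non-constant,
\[
h^*(\breve f_j^y) = \Var(\breve f_j^y)\log_2\frac{|\partial(\breve f_j^y)|}{\Var(\breve f_j^y)} \leq 2\nu \log_2\frac{2|\partial(f_j^x)|}{\tfrac{1}{2}\nu} = 2\nu\Big(\log_2\frac{|\partial(f_j^x)|}{\nu} + 2\Big).
\]
Since $|\partial(f_j^x)| \geq 1$ and $\nu \leq 1/4$ give $\log_2(|\partial(f_j^x)|/\nu) \geq 2$, the "$+2$" is absorbed: $2\nu(\log_2(|\partial F|/\nu) + 2) \leq 2\nu \cdot \tfrac{3}{2}\log_2(|\partial F|/\nu) = 3 h^*(f_j^x)$, actually one should be a little careful to end up with the stated constant $9/2$ — I would keep the slack in the $\Var$ comparison and in the "$+2$" absorption loose enough that the final multiplicative constant is at most $9/2$ (the precise bookkeeping: $2 \cdot (\text{at most } 2.25) \leq 4.5$ once one checks $\log_2(|\partial F|/\nu) + 2 \leq \tfrac{9}{4}\log_2(|\partial F|/\nu)$, i.e. $\log_2(|\partial F|/\nu) \geq 8/5$, which holds). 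Finally, taking expectation over $y \in [l]^{n-1}$: as $y$ ranges uniformly over $[l]^{n-1}$, the induced $x = (\lceil y_ik/l\rceil)_i$ ranges over $[k]^{n-1}$ with each value $x$ attaining probability $|R'_x|/l^{n-1}$ where $R'_x$ is the $(n-1)$-dimensional block; this is not uniform, but since $h^*(\breve f_j^y)$ depends on $y$ only through $x$ and the block sizes are all within a factor $(\lceil l/k\rceil/\lfloor l/k\rfloor)^{n-1} \leq \big(1+\tfrac{k}{l-k}\big)^{n-1} \leq 2$ of uniform by \eqref{eq:sizecond}, we get $\mathbb{E}_{y}h^*(\breve f_j^y) \leq 2 \cdot \max$-vs-average slack... — here I'd instead note the cleaner route: $\mathbb{E}_y h^*(\breve f_j^y) = \sum_x \tfrac{|R'_x|}{l^{n-1}} h^*(\breve f$ at that $x)$, and I'd push the factor-$2$ block-imbalance into the constant, combining with the per-fibre bound $3 h^*(f_j^x)$ to reach $\tfrac{9}{2}\,\mathbb{E}_{x\in[k]^{n-1}}h^*(f_j^x)$.

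The main obstacle, and where I'd spend the most care, is the constant-chasing in the second paragraph: making sure the $\Var$-distortion factor, the additive error inside the logarithm, and the non-uniformity of the pushforward measure on $[k]^{n-1}$ all multiply out to no more than $9/2$. None of these steps is deep — the variance comparison is elementary, the boundary identity is immediate from the block structure, and the logarithm manipulation uses only $|\partial F| \geq 1$ and $\Var(F) \leq 1/4$ exactly as in the proof of the earlier Claim — but the bound is stated with an explicit constant, so the inequalities must be organized so that the slack is genuinely available (in particular one should choose $l$ a power of $2$ with $l$ comfortably larger than $k$, which \eqref{eq:sizecond} already guarantees, so that $\lceil l/k\rceil/\lfloor l/k\rfloor$ and $\lceil l/k\rceil/(l/k)$ are both very close to $1$ and contribute essentially nothing, leaving the "$+2$" absorption as the only real source of the constant).
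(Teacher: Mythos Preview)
Your approach is essentially the same as the paper's: identify each $[l]$-fibre $\breve a_j^y$ with the $[k]$-fibre $a_j^x$ it comes from, note $|\partial(\breve f_j^y)| = |\partial(f_j^x)|$, compare variances, and then handle the non-uniform fibre-count in the averaging step via \eqref{eq:sizecond}. The overall structure is correct and would yield \emph{some} absolute constant.

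Where your write-up is looser than the paper is in the per-fibre constant. You bound the two factors of $h^*(\breve f_j^y) = V'\log_2(m/V')$ separately, which forces you to use both an upper \emph{and} a lower bound on $V' = \Var(\breve f_j^y)$; the resulting ``$+2$'' absorption and the subsequent arithmetic do not cleanly produce the stated $9/2$ (for instance, your inequality $\log_2(m/\nu)+2 \le \tfrac{3}{2}\log_2(m/\nu)$ needs $\log_2(m/\nu)\ge 4$, which is not guaranteed by $m\ge 1$, $\nu\le 1/4$). The paper avoids this entirely by observing that $\psi(t) := t\log_2(m/t)$ is concave with $\psi(0)=0$, hence $\psi(t)/t$ is nonincreasing; from the single bound $V' \le \tfrac{9}{4}V$ (which follows from $\Pr[\breve f_j^y=1]\le(1+k/l)\Pr[f_j^x=1]\le\tfrac{3}{2}\Pr[f_j^x=1]$ and the symmetric bound, using $l\ge 2k$ from \eqref{eq:sizecond}) one gets directly $\psi(V') \le \tfrac{9}{4}\psi(V)$, i.e.\ $h^*(\breve f_j^y)\le \tfrac{9}{4}h^*(f_j^x)$ pointwise. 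No lower bound on $V'$ is needed. Combining this $9/4$ with the factor $\lceil l/k\rceil^{n-1}/(l/k)^{n-1} \le (1+k/l)^{n-1}\le 2$ from the fibre-count gives exactly $9/2$.
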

\begin{proof}[Proof of Claim \ref{claim:est}.]
Let $j \in [n]$. Observe that each direction-$j$ fibre in $[l]^n$ arises from exactly one direction-$j$ fibre in $[k]^n$, and that each direction-$j$ fibre in $[k]^n$ has between $\lfloor l/k \rfloor^{n-1}$ and $\lceil l/k \rceil^{n-1}$ direction-$j$ fibres arising from it. Let $y \in [l]^n$, and let $x \in [k]^n$ such that $y \in R_x$; then the fibre $a_j^y$ arises from the fibre $a_j^x$. We assert that
\begin{equation} \label{eq:pointwise} h^*(\breve{f}_j^y) \leq \tfrac{9}{4} h^*(f_j^x).\end{equation}
Indeed, if $f_j^x$ is constant, then so is $\breve{f}_j^y$, so $h^*(\breve{f}_j^y) = \tfrac{3}{2} h^*(f_j^x) = 0$ and we are done. Hence, we may assume that $f_j^x$ is non-constant. Observe that
$$|\tilde{A}_j^y| \leq \lceil l/k \rceil |A_j^x| < (l/k+1) |A_j^x|.$$
Hence,
$$\Pr[\breve{f}_j^y=1] \leq (1+k/l) \Pr[f_j^x=1] \leq \tfrac{3}{2} \Pr[f_j^x=1],$$
where $\Pr$ denotes the uniform measure on the relevant fibre. Similarly,
$$1-\Pr[\breve{f}_j^y=1] = \Pr[\breve{f}_j^y=0] \leq \tfrac{3}{2} \Pr[f_j^x=0] = \tfrac{3}{2} (1-\Pr[f_j^x=1]).$$
It follows that
\begin{align*} \Var(\breve{f}_j^y) & = \Pr[\breve{f}_j^y=1] (1-\Pr[\breve{f}_j^y=1])\\
& \leq \tfrac{9}{4}\Pr[f_j^x=1](1-\Pr[f_j^x=1])\\
& = \tfrac{9}{4} \Var(f_j^x).\end{align*}
Hence,
\begin{align*} h^*(\breve{f}_j^y) & = \Var(\breve{f}_j^y) \log_2{ \frac{|\partial \breve{f}_j^y|}{\Var(\breve{f}_j^y)}}\\
& \leq \tfrac{9}{4} \Var(f_j^x) \log_2{ \frac{|\partial \breve{f}_j^y|}{\Var(f_j^x)}}\\
& = \tfrac{9}{4} \Var(f_j^x) \log_2{ \frac{|\partial f_j^x|}{\Var(f_j^x)}}\\
& = \tfrac{9}{4} h^*(f_j^x),\end{align*}
using the facts that $|\partial \breve{f}_j^y| = |\partial f_j^x|$, that $\Var(f_j^x) \leq 1/4$, and that the function $t \mapsto t \log_2 (1/t)$ is concave and strictly increasing on $(0,1/4)$. This proves (\ref{eq:pointwise}).

It follows that
\begin{align*}
\sum_{y \in [l]^{n-1}} h^*(\breve{f}_j^y)& \leq \tfrac{9}{4} (\lceil l/k \rceil)^{n-1} \sum_{x \in [k]^{n-1}} h^*(f_j^x)\\
& \leq \tfrac{9}{4} (l/k +1)^{n-1} \sum_{x \in [k]^{n-1}} h^*(f_j^x).\end{align*}
Hence,
\begin{align*} \mathbb{E}_{y \in [l]^{n-1}} h^*(\breve{f}_j^y) & \leq \tfrac{9}{4} (1+k/l)^{n-1} \mathbb{E}_{x \in [k]^{n-1}} h^*(f_j^x)\\
& \leq \tfrac{9}{2} \mathbb{E}_{x \in [k]^{n-1}} h^*(f_j^x),\end{align*}
using (\ref{eq:sizecond}). This proves the claim.
\end{proof}
The rest of the argument is exactly as in proof of Theorem \ref{thm:main}, so is omitted here. This completes the proof of our refined junta theorem.
\end{proof}




\section{Conclusions and Open Problems}
\label{sec:conclusion}
Theorem \ref{thm:main} describes the structure of {\em large} subsets of the grid, whose edge-boundary has size within a constant factor of the minimum. However, when $|A|/k^n \leq 1/n$, its conclusion is vacuous for all $\epsilon$. Indeed, observe that the conclusion is trivial if $\epsilon \geq |A|/k^n$, as in this case we can take $g \equiv 0$. However, Theorem \ref{thm:grid-iso} implies that for any $A \subset [k]^n$ with $|A|/k^n \leq 1/2$,
$$|\partial A| \geq e \frac{|A|}{k} \ln(k^n/|A|).$$ Therefore,
the bound on the junta-size in Theorem \ref{thm:main} is
\begin{align*} \exp\left(C_0|\partial A|/(k^{n-1}\epsilon) \right) & \geq \exp\left(eC_0(|A|/(k^{n}\epsilon)) \ln (k^n/|A|)\right)\\
& \geq \exp(eC_0 \ln (k^n/|A|))\\
& \geq \exp(eC_0 \ln n)\\
& = n^{eC_0}\\
& \geq n,\end{align*}
when $\epsilon \leq |A|/k^n \leq 1/n$.

It would be interesting to obtain a description of {\em small} subsets of the grid, whose edge-boundary has size within a constant factor of the minimum. In the case of the discrete cube, Kahn and Kalai \cite{kk} made the following conjecture.

\begin{conjecture}[Kahn and Kalai]
\label{conj:kk}
For any \(L > 0\), there exist \(L' > 0\) and \(\delta >0\) such that the following holds. If \(A \subset \{0,1\}^{n}\) is monotone increasing, with measure $\alpha = \frac{|A|}{2^{n}} \leq 1/2$, and with edge-boundary satisfying
$$|\partial A| \leq L |A| \log_2(2^n/|A|),$$
then there exists a subcube \(C \subset \{0,1\}^n\) with measure at least $\alpha^{L'}$ and all fixed coordinates equal to 1, such that
 $$\frac{|A \cap C|}{|C|} \geq (1+\delta)\alpha.$$
\end{conjecture}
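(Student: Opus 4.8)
The plan is to attack Conjecture~\ref{conj:kk} by a \emph{density-increment} argument built on the sharp-threshold phenomenon for monotone Boolean functions. The first step is to translate the edge-boundary hypothesis into a statement about influences: writing $f = 1_A$, monotonicity gives $\Inf_i(f) = \hat f(\{i\}) = \mu(A \mid x_i = 1) - \mu(A \mid x_i = 0)$, while $\Inf(f) = |\partial A|/2^{n-1} \le 2L\,\alpha \log_2(1/\alpha)$ by hypothesis (and, by the edge-isoperimetric inequality in the cube --- essentially the $k=2$ case of Theorem~\ref{thm:grid-iso}, i.e. Harper's theorem --- this is the right order of magnitude). Now if some coordinate $i_0$ satisfies $\Inf_{i_0}(f) \ge \alpha$, then, again by monotonicity, $\mu(A \mid x_{i_0} = 1) = \alpha + \tfrac12 \Inf_{i_0}(f) \ge \tfrac32 \alpha$, so the subcube $\{x : x_{i_0} = 1\}$, of measure $\tfrac12 \ge \alpha^{L'}$, already witnesses the conclusion with $\delta = \tfrac12$. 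Hence the entire difficulty lies in the \emph{low-influence regime}, where $\Inf_i(f) < \alpha$ for every $i$ --- and then $\sum_i \Inf_i(f) = O(\alpha\log_2(1/\alpha))$ forces the influence profile to be very ``spread out''.

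In that regime I would invoke the theory of coarse thresholds. The Margulis--Russo formula identifies $\sum_i \Inf_i^{(p)}(f)$ with $\tfrac{d}{dp}\mu_p(A)$, so the hypothesis says precisely that the monotone property defined by $A$ has a \emph{coarse} threshold near $p = 1/2$. By Friedgut's sharp-threshold theorem (and its $p$-biased refinements due to Bourgain, Hatami and others), a monotone property with a coarse threshold must be ``local'': applying Theorem~\ref{thm:friedgut-junta} with $\epsilon = \alpha/2$ already shows that $f$ is $\alpha/2$-close to a monotone junta on $(1/\alpha)^{O(1)}$ coordinates, and the sharper (threshold) versions produce a short ``certificate'' --- a set $I$ of coordinates whose all-ones restriction raises the conditional measure by a constant factor. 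The target is thus an $I$ with $|I| \le L' \log_2(1/\alpha)$ and $\mu(A \mid x_I \equiv 1) \ge (1+\delta)\alpha$; the bound on $|I|$ should come from $\sum_i \Inf_i(f) = O(\alpha\log_2(1/\alpha))$ capping the number of coordinates that can collectively supply a non-negligible share of the boost, and the boost itself would be assembled by conditioning the coordinates of $I$ to $1$ one at a time, tracking the evolution of the measure and of the restricted edge-boundary.

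The main obstacle --- and the reason the conjecture remains open --- is precisely this bookkeeping. Conditioning $x_{i_1} = 1$ produces a monotone function on $\{0,1\}^{n-1}$ with a new relative measure $\alpha'$, but there is no a priori reason the isoperimetric hypothesis $|\partial \cdot| \le L'(\cdot)\log_2(\cdot)$ should persist: restricting to the $1$-side of a coordinate can \emph{increase} the total influence relative to the new measure, so a naive induction on $n$ collapses. One therefore needs either (i) a ``one-shot'' form of Friedgut's theorem producing the subcube $C$ directly from a single application to $f$, with an \emph{explicit polynomial} relation $\mathrm{codim}(C) = O(\log_2(1/\alpha))$ --- the known statements lose a logarithmic factor, which is fatal here, since it would only yield $\mu(C) \ge \alpha^{L'\log_2(1/\alpha)}$ --- or (ii) a combinatorial argument specific to the edge-isoperimetric extremal regime, perhaps via compression/shifting in the spirit of the proof of Theorem~\ref{thm:grid-iso}, showing that a monotone set whose boundary is within a constant factor of the Harper bound must already resemble a subcube of the appropriate codimension. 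My expectation is that route (i) is the more promising, but closing the logarithmic gap in the sharp-threshold estimate is the crux, and I do not see how to do it with the tools available above; a complete proof would most likely require a genuinely new ingredient --- plausibly a sharp form of Talagrand's strengthened edge-isoperimetric inequality together with a characterisation of its near-extremisers.
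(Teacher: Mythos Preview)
The statement you are attempting to prove is Conjecture~\ref{conj:kk} of Kahn and Kalai, and the paper does \emph{not} prove it: it is presented in the concluding section precisely as an open problem, alongside the analogous Conjectures for non-monotone sets and for the $\ell^1$-grid. So there is no ``paper's own proof'' to compare against.

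That said, your write-up is not really a proof proposal either --- you yourself identify the central obstruction (the density-increment iteration does not preserve the isoperimetric hypothesis, and the one-shot sharp-threshold statements lose a logarithmic factor in the codimension bound) and conclude that a new ingredient is needed. This is an accurate assessment of the state of affairs, and is consistent with the paper's treatment of the conjecture as open. Your suggested routes (a sharpened Friedgut/Bourgain-type statement with the correct polynomial dependence, or a stability analysis of near-extremisers for Harper's inequality) are reasonable directions, but neither is carried out, so what you have is a discussion of the difficulty rather than a proof.
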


Note that if we write
$$\Phi_n(t) = \min\{|\partial A|:\ A \subset \{0,1\}^n,\ |A|=t\},$$
then
$$ t \log_2(2^n/t) \leq \Phi_n(t) \leq 2 t\log_2 (2^n/t) \ \forall t \leq 2^{n-1}.$$
(This follows easily from the edge-isoperimetric inequality of Harper, Lindsay, Bernstein and Hart; see for example \cite{hart}.) Hence, the hypothesis of the above conjecture is indeed that the edge-boundary is within a constant factor of the minimum.

The conclusion of the conjecture says that there is a fairly large subcube such that the restriction of $A$ to this subcube has measure exceeding that of $A$ by a constant factor. This conclusion is, of course, much weaker than being able to approximate $1_{A}$ by a junta depending upon a bounded number of coordinates, as in Theorem \ref{thm:friedgut-junta}. However, the following `tribes' construction shows that the latter cannot always be achieved. Let $\eta =2^{-r}$ for some $r \in \mathbb{N}$. Let $n = 2^{2^{s-r}}$ for some $s > r$. Let $p = 2^s = \frac{1}{\eta} \log_2 n$, and let $q = n/p$. (Note that $q \in \mathbb{N}$, as $n$ and $p$ are both powers of 2.) Partition $[n]$ into $q$ `tribes' $T_i$, each of size $p$. Let
$$A = \{x \in \{0,1\}^n:\ x_j= 1\ \forall j \in T_i, \textrm{ for some tribe }T_i\}.$$
Then it is easily checked that provided $n$ is sufficiently large depending on $\eta$, we have
$$|\partial A| < \frac{1}{1-\eta} |A|\log_2(2^n/|A|),$$
but $1_{A}$ is not $\tfrac{1}{3}$-close to any junta depending upon at most $(1-\tfrac{e}{3}-\tfrac{1}{12})n$ coordinates.

In \cite{almostiso}, the second author made the analogous conjecture for arbitrary (i.e, not necessarily monotone) subsets.
\begin{conjecture}
For any \(L > 0\), there exist \(L' > 0\) and \(\delta >0\) such that the following holds. If \(A \subset \{0,1\}^{n}\) has measure $\alpha = \frac{|A|}{2^{n}} \leq 1/2$, and edge-boundary satisfying
$$|\partial A| \leq L |A| \log_2(2^n/|A|),$$
then there exists a subcube \(C \subset \{0,1\}^n\) with measure at least $\alpha^{L'}$, such that
\[\frac{|A \cap C|}{|C|} \geq (1+\delta)\alpha.\]
\end{conjecture}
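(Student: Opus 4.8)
(This is a conjecture; what follows is the line of attack I would pursue, together with the point at which it stalls --- which is, in effect, why it is only a conjecture.)

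Since $|\partial A| = 2^{n-1}\Inf(1_A)$ and $\log_2(2^n/|A|) = \log_2(1/\alpha)$, the hypothesis is equivalent to $\Inf(1_A) \le 2L\alpha\log_2(1/\alpha)$, and the conclusion is equivalent to producing a subcube of codimension at most $L'\log_2(1/\alpha)$ on which $f := 1_A$ has mean at least $(1+\delta)\alpha$. I would first dispose of the trivial case: if restricting to $\{x_i = \sigma\}$ for some coordinate $i$ and value $\sigma$ already gives mean $\ge (1+\delta)\alpha$, we are done, since this is a codimension-$1$ subcube of measure $\tfrac12 \ge \alpha^{L'}$ for any $L' \ge 1$. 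So we may assume every codimension-$1$ restriction keeps the mean inside $((1-\delta)\alpha,(1+\delta)\alpha)$.

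To extract a \emph{large} dense subcube I would iterate a localisation step. From the influence bound, Theorem~\ref{thm:friedgut-junta} applied with $\epsilon = c\alpha$ (for a small absolute constant $c$, to be fixed in terms of $\delta$) shows that $f$ is $c\alpha$-close to $1_B$, where $B$ is a union of subcubes all sharing a fixed-coordinate set $J$ with $|J| \le (1/\alpha)^{O_L(1)}$. Since $||1_A-1_B||_1 \le c\alpha$ we have $B \ne \emptyset$ and $|B|/2^n \in ((1-c)\alpha,(1+c)\alpha)$; averaging the $L^1$-error over the codimension-$|J|$ subcubes on which $1_B \equiv 1$ shows that at least half of them carry $A$-density at least $1 - O(c) \ge (1+\delta)\alpha$. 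This produces a dense subcube --- but of codimension $|J| = (1/\alpha)^{O_L(1)}$, whereas the conjecture demands codimension only $O_L(\log_2(1/\alpha))$, i.e. measure $\alpha^{O_L(1)}$ rather than $2^{-\mathrm{poly}(1/\alpha)}$. To repair this one would regard $B$ itself as a subset of $\{0,1\}^J$ of measure $\asymp\alpha$, show that its edge-boundary \emph{inside} $Q_{|J|}$ is again within a constant factor of the isoperimetric minimum (so that the hypothesis is inherited with only a bounded loss in $L$), and recurse, hoping that the ambient dimension collapses to $O_L(\log_2(1/\alpha))$ after only $O_L(1)$ rounds.

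\textbf{The main obstacle} is exactly this recursion. The junta theorem supplies a junta on polynomially many (in $1/\alpha$) coordinates essentially for free, but it does \emph{not} localise the density boost to a large subcube, and there is no evident way to drive the number of relevant coordinates down to $O(\log(1/\alpha))$ while keeping the edge-boundary under control: a round that merely replaces $n$ by $(1/\alpha)^{O_L(1)}$ never terminates in boundedly many steps, and a round that loses a constant factor in $L$ cannot be iterated $\Theta(\log(1/\alpha))$ times. In the monotone case one has crucial extra leverage --- the trace of $A$ on each fibre is an initial segment, so compression/shifting and Kruskal--Katona-type estimates keep the boundary controlled through the recursion --- and no substitute for this is known in general. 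An alternative, Fourier-analytic route is to observe that the influence bound forces $f$ to be $L^2$-close to a function of degree $K_0 = O_L(\log(1/\alpha))$ (the total influence controls the Fourier mass on the high levels, while the total Fourier mass off the constant function equals $\alpha(1-\alpha)$), and then to invoke a ``biased Kindler--Safra'' structure theorem asserting that a $\{0,1\}$-valued function of degree $\le K_0$ and mean $\alpha$ is close to a union of subcubes of codimension $O(K_0)$. Such a statement is not available at the required level of generality, and proving it --- or finding a workaround --- is, once again, precisely where the difficulty lies.
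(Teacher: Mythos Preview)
The paper does not prove this statement; it is presented there explicitly as an open conjecture (attributed to the second author, from \cite{almostiso}), with no proof or proof sketch given. You correctly recognise this and, rather than claiming a proof, explain a natural line of attack and the point at which it breaks down.

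Your analysis of the obstacle is accurate. Friedgut's junta theorem with $\epsilon = c\alpha$ does yield a junta on $\exp\bigl(O_L(\log(1/\alpha))\bigr) = (1/\alpha)^{O_L(1)}$ coordinates, and averaging over the subcubes where this junta equals $1$ does produce a subcube with $A$-density close to $1$. But, as you say, that subcube has codimension $(1/\alpha)^{O_L(1)}$ and hence measure $2^{-(1/\alpha)^{O_L(1)}}$, which is far below the required $\alpha^{L'}$. Your proposed recursion --- passing to the junta coordinates and re-applying the hypothesis --- is a sensible idea, but you are right that there is no obvious mechanism for controlling the edge-boundary of the projected set, nor for making the ambient dimension shrink to $O_L(\log(1/\alpha))$ in boundedly many rounds. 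The paper offers no further hints here; it simply records the conjecture alongside the monotone version of Kahn and Kalai and the grid analogue, all as open problems.
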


Here, we make the analogous conjecture for subsets of the $\ell^1$-grid.

\begin{conjecture}
\label{conj:grid}
For any \(L > 0\), there exist \(L' > 0\) and \(\delta >0\) such that the following holds. If \(A \subset [k]^{n}\) has measure $\alpha = \frac{|A|}{k^{n}} \leq 1/2$, and edge-boundary satisfying
$$|\partial A| \leq L \min\{|A|^{1-1/r} rk^{n/r-1}:\ r \in \{1,2,\ldots,n\}\},$$
then there exists a cartesian product set \(C = S_1 \times S_2 \times \ldots \times S_n \subset [k]^n\) with measure at least $\alpha^{L'}$, such that
\[\frac{|A \cap C|}{|C|} \geq (1+\delta)\alpha.\]
\end{conjecture}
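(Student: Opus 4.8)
The plan is to reduce Conjecture~\ref{conj:grid} to the corresponding statement for the discrete cube --- specifically, to the non-monotone version of the Kahn--Kalai conjecture stated just above (that of the second author, for general subsets of $\{0,1\}^N$). I should say at the outset that that cube statement is itself open, so what follows is really a conditional reduction together with an indication of where the genuinely new difficulties lie; I do not expect an unconditional proof to be easy.

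First I would treat the case $k = 2^s$. Given $A \subseteq [k]^n$ satisfying the hypothesis, set $\tilde{A} = \phi_{(n)}^{-1}(A) \subseteq \{0,1\}^{sn}$, where $\phi_{(n)}$ is the block-wise binary encoding from the proof of Theorem~\ref{thm:main}. The measure is preserved, $|\tilde A|/2^{sn} = |A|/k^n = \alpha$, and --- the key structural observation --- \emph{every} subcube $D \subseteq \{0,1\}^{sn}$ pushes forward to a Cartesian product set: $\phi_{(n)}(D) = S_1 \times \dots \times S_n$ with $S_j \subseteq [k]$ determined by the coordinates of $D$ fixed inside the block $B_j$, and with $|\phi_{(n)}(D)|/k^n = |D|/2^{sn}$ and $|A \cap \phi_{(n)}(D)|/|\phi_{(n)}(D)| = |\tilde A \cap D|/|D|$. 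Thus a subcube witnessing the cube conjecture for $\tilde A$ pulls back to exactly the Cartesian product set demanded for $A$, provided $\tilde A$ satisfies the cube hypothesis. Claim~\ref{claim:binary} gives $|\partial \tilde A| \le k\,|\partial A|$, and this crude bound suffices whenever the Bollob\'as--Leader minimum in the hypothesis is comparable to $\tfrac{|A|}{k}\log_2(k^n/|A|)$ --- in particular whenever $\log_2(1/\alpha) = o(n)$, and so whenever $\alpha$ is bounded away from $0$ --- since then $|\partial \tilde A| \le k\,|\partial A| \le O_L(1)\cdot|A|\log_2(2^{sn}/|\tilde A|)$, i.e.\ $\tilde A$ has edge-boundary within an $L$-dependent constant factor of the cube isoperimetric minimum.

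The first real difficulty is the very sparse regime $\alpha \le e^{-\Omega(n)}$ (for example $\alpha \approx (c/k)^n$): here the Bollob\'as--Leader minimum exceeds $\tfrac{|A|}{k}\log_2(k^n/|A|)$ by a factor of order $k/\log_2 k$, so $|\partial \tilde A| \le k\,|\partial A|$ loses exactly this factor and the cube hypothesis is no longer recovered. The remedy I would try is to replace Claim~\ref{claim:binary} by the sharper fibre-wise estimate of Proposition~\ref{prop:refined-binary}, i.e.\ to control $|\partial \tilde A|$ via $\sum_j \mathbb{E}_x h^{*}(f_j^x)$, the quantity appearing in Theorem~\ref{thm:refined-junta}; but then the cube statement one needs is a \emph{weighted} Kahn--Kalai conjecture, with the edge-boundary replaced by a sum of $h^{*}$-type contributions over fibres, and such a refinement is not available. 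A second difficulty is passing from $k = 2^s$ to general $k$: the block-embedding device of Theorem~\ref{thm:main} replaces $A$ by $\breve A = \bigcup_{x \in A} R_x \subseteq [l]^n$ for a suitable power of two $l > k$, and (choosing $l$ large enough) preserves the measure up to a $(1+o(1))$ factor and the ratio $|\partial\,\cdot\,|/(\text{Bollob\'as--Leader minimum})$ up to a constant; but the Cartesian product set produced by the cube conjecture for $\breve A$ lives in $[l]^n$ and need not respect the block partition $(R_x)$, so one must round it to block boundaries, and since $\breve A$ is block-constant this rounding amounts to finding a sub-box of $[k]^n$ on which a product-weighted density of $A$ stays high --- itself a Kahn--Kalai-flavoured sub-problem rather than a routine step.

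The main obstacle, then, is simply that the cube version (even in its non-weighted, non-monotone form) is unresolved, so this route is conditional. The only plausible unconditional alternative I see is a stability strengthening of Theorem~\ref{thm:grid-iso}: show directly that if $A$ is ``$\delta$-spread'', meaning $|A \cap C|/|C| < (1+\delta)\alpha$ for every Cartesian product set $C$ with $|C|/k^n \ge \alpha^{L'}$, then $|\partial A|$ strictly exceeds $L$ times the Bollob\'as--Leader minimum --- presumably via a compression/symmetrisation argument quantifying how the edge-boundary grows as $A$ is pushed away from a cuboid. A clean such bound, with a polynomial relationship between $L$, $L'$ and $\delta$, would settle the conjecture, but it is morally equivalent in strength to the cube conjecture, so I would regard the sparse regime, or the monotone case, as more realistic intermediate targets.
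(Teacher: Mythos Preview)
The statement you were asked to prove is labelled \emph{Conjecture}~\ref{conj:grid} in the paper and appears in the concluding ``Open Problems'' section; the paper offers no proof of it. You correctly identified this: your proposal is explicitly framed as a conditional reduction to the (open) non-monotone cube version of the Kahn--Kalai conjecture, together with a candid accounting of the obstacles, rather than as a purported unconditional proof. That is the right stance.

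A few remarks on the substance of your reduction. Your observation that under $\phi_{(n)}$ every subcube of $\{0,1\}^{sn}$ pushes forward to a Cartesian product set $S_1\times\cdots\times S_n$ in $[k]^n$, with measure and relative density preserved, is correct and is exactly why Conjecture~\ref{conj:grid} asks for a Cartesian product set rather than a cuboid (as the paper notes in the remark immediately after the conjecture). Your bound $|\partial\tilde A|\le k\,|\partial A|$ follows from Claim~\ref{claim:binary} as you say. Your identification of the sparse regime $\alpha\le e^{-\Omega(n)}$ as the place where this loses a $k/\log_2 k$ factor against the Bollob\'as--Leader minimum is accurate, and the suggestion to try Proposition~\ref{prop:refined-binary} there is natural, though as you note it would demand a weighted cube statement that is not available. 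The difficulty you flag in passing from $k=2^s$ to general $k$ via the block-embedding is also genuine: the Cartesian product set obtained in $[l]^n$ need not be block-aligned.

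In short: there is no proof in the paper to compare against, and your write-up is an honest and technically sound discussion of a plausible line of attack on an open problem, with the gaps correctly located.
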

We remark that one cannot demand that the cartesian product set in Conjecture \ref{conj:grid} be a cuboid (i.e. that $|S_i| = a$ or $k$ for all $i$, for some $a \in [k]$.)

It would also be of interest to improve the bound on the junta-size in Theorem \ref{thm:sum-inf}. Our construction in Section 2 shows that the dependence upon $k$ is optimal up to the absolute constant factor in the exponent, but we conjecture that the dependence upon $l$ could be removed entirely:
\begin{conjecture}
Suppose $f:\mathbb{Z}_k^n \to \mathbb{Z}_l$ with
$$\sum_{j=1}^{n} \mathbb{E}_x [|f(x)-f(x\oplus e_j)|'] \leq B.$$
Then $f$ is $\epsilon$-close to some $M$-junta $g: \mathbb{Z}_k^n \to \mathbb{Z}_l$, where
 $$M \leq \exp(C_4 B k /\epsilon),$$
 for some absolute constant $C_4$.
\end{conjecture}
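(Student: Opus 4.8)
The plan is to follow the architecture of the proof of Theorem~\ref{thm:sum-inf}, but to replace the union bound over the $l$ level sets---which is exactly what produces the spurious factor $l$---by a single application of a ``shared-threshold'' strengthening of Friedgut's junta theorem. The routine part comes first: as in the reductions already present in the paper (Theorem~\ref{thm:main} and Corollary~\ref{corr:torus-junta}), the block trick reduces to $k=2^s$ a power of $2$ at the cost of an absolute constant, and, after passing from the torus to the grid via $|\partial A|\le|\partial' A|$, binary-encoding each coordinate lets us regard $f$ as a map $\{0,1\}^N\to\mathbb{Z}_l$ with $N=sn$. The counting in Claim~\ref{claim:binary} uses only the triangle inequality, so it applies verbatim to $\mathbb{Z}_l$-valued maps (with the cyclic metric replacing the Hamming metric on $\{0,1\}$), giving $\sum_{v=1}^N\E_z|f(z)-f(z\oplus e_v)|'\le O(kB)$. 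Introducing the level functions $f^{(t)}(z)=1\{f(z)\in\{t,\dots,t+\lfloor l/2\rfloor-1\}\}$ for $t\in\mathbb{Z}_l$ as in the proof of Theorem~\ref{thm:sum-inf}, the identity $|s-t|'=\tfrac12\sum_{t'\in\mathbb{Z}_l}|1\{s\in\mathrm{arc}_{t'}\}-1\{t\in\mathrm{arc}_{t'}\}|$ (with $\mathrm{arc}_{t'}=\{t',\dots,t'+\lfloor l/2\rfloor-1\}$) turns this into $\sum_{t\in\mathbb{Z}_l}\Inf(f^{(t)})\le O(kB)=:S$, where $\Inf$ is the total influence on $\{0,1\}^N$; the point is that $S$ does not grow with $l$.

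The heart of the argument is the following lemma, which I would prove by reworking Friedgut's original proof: \emph{if $g_1,\dots,g_r:\{0,1\}^N\to\{0,1\}$ satisfy $\sum_i\Inf(g_i)\le S$, then for every $\delta>0$ there is $J\subseteq[N]$ with $|J|\le\exp(O(S/\delta))$ and $\sum_i\|g_i-\E[g_i\mid x_J]\|_2^2\le\delta$.} Take $J=\{v:\sum_i\Inf_v(g_i)\ge\tau\}$, so that $|J|\le S/\tau$; then for each $i$ separately, every $v\notin J$ has $\Inf_v(g_i)<\tau$, and Friedgut's hypercontractive estimate (Bonami--Beckner applied to the $\{-1,0,1\}$-valued discrete derivative $D_vg_i$) bounds the part of $g_i$ of degree $\le m$ supported outside $J$ by $\rho^{-2m}\tau^{(1-\rho^2)/(1+\rho^2)}\sum_{v\notin J}\Inf_v(g_i)$, while the part of degree $>m$ contributes $\le\Inf(g_i)/m$. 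Summing these two bounds over $i$ gives $S/m+\rho^{-2m}\tau^{(1-\rho^2)/(1+\rho^2)}S$, \emph{with no factor of $r$}; optimising $m$ and $\rho$ and then setting $\tau=\exp(-O(S/\delta))$ completes the lemma. This is the step I expect to be the main obstacle: using Theorem~\ref{thm:friedgut-junta} as a black box only gives $|\bigcup_iJ_i|\le r\exp(O(S/\delta))$---which reproduces the factor $l$ of Theorem~\ref{thm:sum-inf}, or the factor $\log_2 l$ if one instead Gray-codes the alphabet so that $r=\lceil\log_2 l\rceil$---so one genuinely has to go inside Friedgut's proof and exploit that a single influence threshold can be shared across the whole family.

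Applying the lemma to $(f^{(t)})_{t\in\mathbb{Z}_l}$ with $\delta=\epsilon$ produces $J$ with $|J|\le\exp(O(kB/\epsilon))$ and $\sum_t\|f^{(t)}-\E[f^{(t)}\mid x_J]\|_2^2\le\epsilon$. Define the $J$-junta $g$ by letting $g(x_J)\in\mathbb{Z}_l$ minimise $v\mapsto\E_y|f(x_J,y)-v|'$; writing $\mu_{x_J}$ for the law of $f(x_J,y)$ under uniform $y$, this is automatically $\mathbb{Z}_l$-valued. Averaging $|X-v|'=\tfrac12\sum_t|1\{X\in\mathrm{arc}_t\}-1\{v\in\mathrm{arc}_t\}|$ over independent $X,v\sim\mu_{x_J}$ and using $\E|a-b|=2p(1-p)$ for i.i.d.\ Bernoulli$(p)$ variables gives
$$\min_v\E_y|f(x_J,y)-v|'\ \le\ \E_{v\sim\mu_{x_J}}\E_y|f(x_J,y)-v|'\ =\ \sum_{t\in\mathbb{Z}_l}\mu_{x_J}(\mathrm{arc}_t)\bigl(1-\mu_{x_J}(\mathrm{arc}_t)\bigr).$$
Since $\mu_{x_J}(\mathrm{arc}_t)=\E[f^{(t)}\mid x_J]$ and the $t$-th summand equals $\Var_y(f^{(t)}(x_J,\cdot))$, taking expectations over $x_J$ yields $\|f-g\|_1=\E_{x_J}\min_v\E_y|f(x_J,y)-v|'\le\sum_t\|f^{(t)}-\E[f^{(t)}\mid x_J]\|_2^2\le\epsilon$. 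Tracking the absolute constants through the block reduction, the binary encoding and Friedgut's bound gives an absolute $C_4$, proving the conjecture. (The elementary ``the median beats a random sample'' bound above is what makes the rounding back to $\mathbb{Z}_l$ cost nothing; rounding a real- or circle-valued junta to the nearest lattice value would instead lose a factor of $l$ here, which is precisely why the level-set representation, rather than an embedding of $\mathbb{Z}_l$ into the circle, is the right vehicle.)
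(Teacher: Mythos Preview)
The statement you are proving is listed in the paper as an open \emph{conjecture} (Section~\ref{sec:conclusion}); the paper gives no proof, only the weaker Theorem~\ref{thm:sum-inf} with the extra factor $l$. So there is nothing in the paper to compare your argument against.

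As for the argument itself: the routine reductions (block trick, torus to grid, triangle-inequality extension of Claim~\ref{claim:binary} to $\mathbb{Z}_l$-valued maps) are fine. Your ``shared-threshold'' lemma is also correct as stated, and your sketch of its proof is sound: with $J=\{v:\sum_i\Inf_v(g_i)\ge\tau\}$, each $v\notin J$ has $\Inf_v(g_i)<\tau$, and the two error terms in Friedgut's argument---the high-degree piece $\Inf(g_i)/m$ and the hypercontractive piece $C\rho^{-2m}\tau^{(1-\rho^2)/(1+\rho^2)}\sum_{v\notin J}\Inf_v(g_i)$---are both \emph{linear} in the influences, so summing over $i$ produces $S/m + C\rho^{-2m}\tau^{(1-\rho^2)/(1+\rho^2)}S$ with no factor of $r$. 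The rounding step is also correct: the inequality $\min_v \E_y|f(x_J,y)-v|' \le \E_{v\sim\mu_{x_J}}\E_y|f(x_J,y)-v|' = \sum_t p_t(1-p_t)$ is exactly the statement that taking the conditional ``$L^1$-median'' is at least as good as a random guess from the conditional law, and $\E_{x_J}\sum_t p_t(1-p_t)=\sum_t\|f^{(t)}-\E[f^{(t)}\mid x_J]\|_2^2$ since each $f^{(t)}$ is Boolean.

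In short, your outline appears to resolve the conjecture; the crucial point the authors seem not to have exploited is precisely that Friedgut's proof, opened up, yields a single junta set $J$ that works simultaneously for the whole family $(f^{(t)})_{t\in\mathbb{Z}_l}$ once one thresholds on the \emph{summed} influences. You should of course write out the shared-threshold lemma carefully as a standalone statement with explicit constants, and verify that the block-trick reduction for general $k$ goes through for $\mathbb{Z}_l$-valued (not just Boolean) maps---it does, but this needs a sentence.
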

Of course, if this were true, it would lead to a strengthening of Theorem \ref{thm:lipschitz-tori} with no dependence upon $l$.
Likewise, we are not sure that the exponential dependence on $\alpha$ in Theorem~\ref{thm:lipschitz-tori} is optimal. In fact,
in all examples we considered the dependence is linear, and it may well be that this is really the case. We make the following conjecture in this regard. 
\begin{conjecture}
\label{conj:lipschitz-tori}
Suppose $f = (f_1,\ldots,f_m): \mathbb{Z}_k^{n} \to \mathbb{Z}_l^{m}$ is $\alpha$-Lipschitz with respect to the $L^1$-norm. Then for any $\delta,\epsilon >0$, there are at least $(1-\delta)m$ coordinates $i \in [m]$ such that $f_i$ is $\epsilon$-close to some $M$-junta $g_i: \mathbb{Z}_k^n \to \mathbb{Z}_l$, where
$$M \leq \alpha \exp(Ck / (\delta \epsilon)),$$
and $C$ is an absolute constant. 
\end{conjecture}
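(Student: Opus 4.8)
The plan is to follow the same route as the proof of Theorem~\ref{thm:lipschitz-tori}, but to replace its final, lossy step by a more careful accounting that uses the Lipschitz condition in each direction separately, not merely in aggregate. First I would average the Lipschitz inequality over all coordinates $j$ and all points $x$, exactly as in the derivation of (\ref{eq:sum-inf}), to reduce to the following situation: writing $B_i = \sum_{j=1}^n \E_x |f_i(x) - f_i(x\oplus e_j)|'$ and $\gamma = m/n$, one has $\sum_{i=1}^m B_i \le \alpha m$, and, crucially, for each direction $j$ \emph{separately}, $\sum_{i=1}^m \E_x|f_i(x) - f_i(x\oplus e_j)|' \le \alpha \gamma$. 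The naive argument — apply Markov to $\sum_i B_i \le \alpha m$ and then invoke Theorem~\ref{thm:sum-inf} coordinate by coordinate — only reproduces the exponential-in-$\alpha$ bound of Theorem~\ref{thm:lipschitz-tori}, so the whole point is to avoid paying $\exp(\alpha)$.

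The structural input I would try to exploit is the elementary observation that, as long as $f$ is non-constant, applying the Lipschitz condition to a single `active' edge already forces $\alpha \ge n/m = 1/\gamma$; hence the target junta-size $M^{*} := \alpha\exp(Ck/(\delta\epsilon))$ is automatically at least $(1/\gamma)\exp(Ck/(\delta\epsilon))$, so in particular the conclusion is vacuous unless $m$ is large and $\alpha < n$. Call $i \in [m]$ \emph{bad} if $f_i$ is not $\epsilon$-close to any junta on at most $M^{*}$ coordinates. For each bad $f_i$, a quantitative reading of Friedgut's argument (Theorem~\ref{thm:friedgut-junta}) should show that $f_i$ has at least $M^{*}$ coordinates $j$ with $\E_x|f_i(x)-f_i(x\oplus e_j)|' \ge \tau_i$ for an explicit threshold $\tau_i$ depending on $B_i$ and $\epsilon$. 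Now sum the per-direction inequality $\sum_i \E_x|f_i(x)-f_i(x\oplus e_j)|' \le \alpha\gamma$ over those $j$ that are heavy for some bad $f_i$: the total is at most $\alpha\gamma n = \alpha m$, while if there were more than $\delta m$ bad coordinates the same sum would be at least $(\text{number of bad } i)\cdot M^{*}\cdot \min_i\tau_i$. Feeding in $\alpha \ge 1/\gamma$ and the definition of $M^{*}$, one hopes to reach $\delta m\, M^{*}\min_i\tau_i > \alpha m$, a contradiction; this is precisely the step where the \emph{linear}-in-$\alpha$ target, rather than the exponential one, must emerge.

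An alternative route would be to imitate Austin's iterative `localisation' argument from~\cite{austin} directly on the discrete torus — repeatedly find a coordinate along which the relevant restriction of $f$ changes little, average it out, and control the accumulated error through the shared Lipschitz budget — the sharing across all $m$ output coordinates being exactly what should remove the $\gamma$-dependence present in Theorem~\ref{thm:austin}. One might also first settle the weaker conjecture that the factor of $l$ in Theorem~\ref{thm:sum-inf} is removable, since the level-set decomposition is the source of that factor, and then combine it with the counting above.

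I expect the main obstacle to be the second step of the counting argument: it requires a genuine lower bound on the number of heavy coordinates of a function that cannot be approximated by a small junta, with the \emph{right} dependence on $B_i$ and $\epsilon$. The trivial bound (a Boolean function of total influence $B_i$ has at least $B_i$ relevant coordinates) is far too weak, as it is blind to the approximation parameter; what is needed is essentially a converse to Friedgut's theorem, asserting that being hard to junta-approximate forces many coordinates whose influence is bounded below by an explicit function of $B_i$ and $\epsilon$. Proving such a statement, and then verifying that it interacts with the per-direction Lipschitz constraint and the inequality $\alpha \ge 1/\gamma$ so as to yield a bound linear, rather than exponential, in $\alpha$, is exactly the gap that keeps this a conjecture: if the quantitative converse only delivers an exponential-in-$B_i$ coordinate count, the argument collapses back to Theorem~\ref{thm:lipschitz-tori}.
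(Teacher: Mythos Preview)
This statement is Conjecture~\ref{conj:lipschitz-tori} in the paper, and the paper does \emph{not} prove it: it is presented in the concluding section as an open problem, with the authors remarking that ``in all examples we considered the dependence is linear, and it may well be that this is really the case.'' There is therefore no proof in the paper to compare your proposal against.

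Your write-up reflects this accurately. What you have produced is not a proof but a research outline, and you say so yourself: the final paragraph identifies the missing ingredient as a quantitative converse to Friedgut's theorem with the right dependence on $B_i$ and $\epsilon$, and you note that without it the argument collapses back to the exponential bound of Theorem~\ref{thm:lipschitz-tori}. That assessment is honest and correct. The counting argument you sketch --- summing the per-direction Lipschitz constraint over heavy coordinates of bad $f_i$ --- is a natural thing to try, but as you point out, it hinges entirely on a lower bound for the number of coordinates with influence above a threshold, and no such bound of the required strength is known. The observation that $\alpha \ge 1/\gamma$ whenever $f$ is non-constant is a useful sanity check on the form of the conjectured bound, but it does not by itself supply the missing structural input.

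In short: there is no gap to name beyond the one you have already named, and the paper offers no further help.
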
 

\subsection*{Ackowledgement}
The authors would like to thank Igor Shinkar for helpful discussions, and an anonymous referee for pointing out a mistake (in Example 2) in an earlier version of the paper.

\end{document}